\def\R{\mathbb{R}}
\def\P{\mathbb{P}}
\def\Fp{\mathbb{F}_p}
\def\Fpb{\overline{\mathbb{F}}_p}
\def\La{\Lambda}
\def\O{\mathcal{O}} 
\def\F{\mathcal{F}}
\def\*{^\times }
\def\G{\mathcal{G}}
\def\l{\lambda}
\def\a{\alpha}
\def\ph{\varphi}
\def\drt{\rightarrow}
\def\ldrt{\longrightarrow}
\def\Q{\mathbb{Q}}
\def\Qp{\mathbb{Q}_p}
\def\Qpb{\overline{\mathbb{Q}}_p}
\def\Zp{\mathbb{Z}_p}
\def\Z{\mathbb{Z}}
\def\N{\mathbb{N}}
\def\Hom{\text{Hom}}
\def\Gal{\text{Gal}}
\def\={\! = \!}
\def\spec{\text{Spec}}
\def\spf{\text{Spf}}
\def\E{\mathcal{E}}
\def\limp{\underset{\longleftarrow}{\text{ lim }}\;}
\def\limi{\underset{\longrightarrow}{\text{ lim }}\;}
\def\iso{\xrightarrow{\;\sim\;}}
\def\GL{\mathrm{GL}}
\def\xrig{\xrightarrow}
\def\M{\mathcal{M}}
\def\X{\mathfrak{X}}
\def\GG{\Gamma}
\def\Ext{\text{Ext}}
\def\bc{\backslash}
\def\spa{\hbox{Spa}}
\def\<{<\hspace{-1mm}}
\def\>{\hspace{-1mm}>}
\def\dem{{\it Démonstration. }}
\def\Fil{\mathrm{Fil}}
\def\unp{[ {\textstyle \frac{1}{p}}]}
\def\HT{\mathrm{HT}}
\def\HN{\mathrm{HN}}
\def\htt{\mathrm{ht}}
\def\Gr{\mathrm{Gr}}
\def\BT{\mathrm{BT}}
\def\Newt{\mathrm{Newt}}
\def\VectFil{\mathrm{VectFil}}
\author{Laurent Fargues}
\address{CNRS--Institut de Mathématiques de Jussieu}
\email{laurent.fargues@imj-prg.fr}
\date{}
\keywords{$p$-divisible groups, $p$-adic Hodge theory, Harder-Narasimhan filtrations, Shimura varieties}
\subjclass[2010]{14Gxx,11Lxx}
\thanks{L'auteur a bénéficié du support  du projet ERC Advanced grant 742608 "GeoLocLang". }
\begin{document}
\title{Théorie de la réduction pour les groupes $p$-divisibles}

\maketitle
\markleft{LAURENT FARGUES}
\markright{Théorie de la réduction pour les groupes $p$-divisibles}

\selectlanguage{french}

\begin{abstract}
Partant de nos travaux sur les filtrations de Harder-Narasimhan des groupes plats finis sur un corps $p$-adique, nous développons une théorie des filtrations de Harder-Narasimhan pour les groupes $p$-divisibles. On applique cela à l'étude de la géométrie des morphismes de périodes des espaces de Rapoport-Zink et à la géométrie $p$-adique des variétés de Shimura.
On définit et étudie en particulier des domaines fondamentaux pour l'action des correspondances de Hecke.
\end{abstract}

\selectlanguage{english}

\begin{abstract}
Starting from our work on Harder-Narasimhan filtrations of finite flat group schemes over a $p$-adic field, we developp a theory of Harder-Narasimhan filtrations for $p$-divisible groups. We apply this to the study of the geometry of period morphisms for Rapoport-Zink spaces and to the $p$-adic geometry of Shimura varieties. We define and study in particular some fundamental domains for the action of Hecke correspondences.
\end{abstract}

\selectlanguage{english}

\newtheorem{theo}{Théorème}
\newtheorem*{theon}{Théorème}
\newtheorem{prop}{Proposition}
\newtheorem*{propn}{Proposition}
\newtheorem{lemm}{Lemme}
\newtheorem{coro}{Corollaire}
\newtheorem*{coron}{Corollaire}
\newtheorem{defi}{Définition}
\newtheorem*{defin}{Définition}
\newtheorem{exem}{Exemple}
\newtheorem{rema}{Remarque}
\newtheorem{conj}{Conjecture}

\tableofcontents

\section*{Introduction}

\subsection*{\'Enoncé du théorème principal}

Le résultat principal de cet article est le suivant. 

\begin{theon}[Théo. \ref{theo:vrai theo principal OK}]
Soit $\M$ l'espace de Rapoport-Zink des déformations par quasi-isogénies d'un groupe $p$-divisible simple à isogénie près sur $\Fpb$ (\cite{RZ}). Notons $\M_\eta$ sa fibre générique comme espace de Berkovich et  $$\pi_{dR}:\M_\eta\drt \F$$ l'application des périodes de Hodge-de-Rham, d'image le domaine de périodes $\F^a$. Soit $\M_\eta^{ss}$ le lieu où les points de $p$-torsion de la déformation universelle est un groupe plat fini semi-stable, un domaine analytique fermé. Alors, 
\begin{enumerate}
\item 
$
\pi_{dR} ( \M_\eta^{ss}) = \F^a.
$
En d'autres termes $Hecke.\M_\eta^{ss} = \M_\eta$,
\item le morphisme $\pi_{dR |\M_\eta^{ss}/p^\Z}$ est quasi-fini,
\item si $\M_{\eta}^{s}$ désigne le lieu stable, un ouvert de $\M_\eta$, alors 
$
\pi_{dR |\M_\eta^{s}/p^\Z}: \M_\eta^{s}/p^\Z\hookrightarrow \F^{a}.
$
\end{enumerate}
\end{theon}

Ici, l'espace {\it $\M_\eta$ est l'analogue $p$-adique des espaces hermitiens symétriques.} Ces derniers sont associés à un couple $(G,X)$, où $G$ est un groupe réductif réel et $X$ une classe de conjugaison de morphisme $h:\mathbb{S}\drt G$ satisfaisant certaines conditions. L'analogue $p$-adique de $h$ est le choix de notre groupe $p$-divisible sur $\Fpb$. La variété $\F$ est une grassmannienne et l'application de périodes $\pi_{dR}:\M_\eta\drt \F$ est l'analogue des périodes de Griffiths. Dans le cas réel, l'application de périodes est un plongement d'image un ouvert simplement connexe de la variété de drapeaux. Dans notre cas, l'ouvert $\F^a$ image de $\pi_{dR}$ n'est pas simplement connexe, {\it $\pi_{dR}$ est étale et ses fibres géométriques  sont des orbites de Hecke} en bijection avec $\GL_n (\Qp)/\GL_n(\Zp)$.

Il s'agit d'une généralisation du morphisme de périodes de Gross-Hopkins (\cite{HopkinsGross}) pour les espaces de Lubin-Tate, définie par Rapoport et Zink (\cite{RZ} chap. 5). Les {\it équations différentielles de Picard-Fuchs $p$-adiques} associées sont complètement intégrables sur $\M_\eta$ et $\pi_{dR}$ est défini par la filtration de Hodge de la déformation universelle.
\\

Le point (1) du théorème précédent est une généralisation du corollaire 23.15 de \cite{HopkinsGross} qui concerne le cas des espaces de Lubin-Tate. Le lieu de semi-stabilité 
est ici défini comme étant le lieu où les points de $p$-torsion de la déformation universelle est un groupe semi-stable (\cite{HNgp}, en particulier le corollaire 11).

Les points (1) et (2) mis ensemble disent que les orbites de Hecke de l'ouvert admissible associé à $\M_\eta^{ss}/p^\Z$ forment un recouvrement admissible localement fini de cet espace rigide. 

Enfin, le point (3) affirme que sur l'image par $\pi_{dR}$ de l'ouvert de stabilité, le morphisme des périodes possède une section canonique.

\subsection*{Description des différentes sections}
\subsubsection*{Section \ref{ChapitreIcFI359}}

\'Etant donnés un corps $p$-adique $K$ et un groupe $p$-divisible $H$ sur $\O_K$, on définit (théo. \ref{SFKEGI39459sguzSF20refq}) une fonction concave $\HN (H) : [0,\htt \, H]\drt [0,\dim H]$. Celle-ci est obtenue par un procédé de renormalisation à partir des polygones des $H[p^n]$ pour tout $n\geq 1$. On montrera plus tard que {\it cette fonction 
concave est en fait un polygone à coordonnées de rupture entières.} 
On montre que $\HN (H)$ est un invariant de la classe d'isogénie de $H$ (prop. \ref{KGDSGIeqerET93}).

\subsubsection*{Section \ref{XVKQSFISDghhyqrfQSIZE935mireille}}

On définit une notion de {\it groupe $p$-divisible semi-stable} (déf. \ref{defi:groupe semi-stable}), puis une notion de {\it groupe de type HN} (déf. \ref{defi:type HN}). Ce sont les groupes $H$, pour lesquels les filtrations de Harder-Narasimhan de $(H[p^n])_{n\geq 1}$ s'agencent bien entre elles pour former une filtration par des sous-groupes $p$-divisibles. On montre que la catégorie des groupes $p$-divisibles à isogénie près, isogènes à un groupe de type HN,
est une \og bonne catégorie de Harder-Narasimhan\fg{} pour la fonction pente $\frac{\dim}{\htt}$ (théo. \ref{theo:structure groupes type HN}).

\subsubsection*{Section \ref{KDSalgoGisdgETP348hhdidncbdsyg}}

Partant de $H$ sur $\O_K$, on définit une suite de quotients de $H$
$$
H=H_0\twoheadrightarrow H_1 \twoheadrightarrow H_2\twoheadrightarrow \cdots \twoheadrightarrow 
$$
qui sont soit des isogénies, soit des quotients par des sous-groupes $p$-divisibles. Il s'agit d'un {\it algorithme de descente} qui, lorsqu'il converge en temps fini, fournit un groupe $p$-divisible isogène de type HN (théo. \ref{SDKGDSHGIERTPO3268SRsdgsee}). 

\subsubsection*{Section \ref{DFDKDGisrZrogospehre4329frsf}}

On définit un polygone non renormalisé de $H$, une fonction concave sur $[0,+\infty[$ (déf. \ref{defi:poly no renormalise}). Il s'agit de la borne supérieure des polygones de la collection $(H[p^n])_{n\geq 1}$. On montre que la pente limite de ce polygone coïncide avec la première pente du polygone renormalisé (coro. \ref{QDGFKIQDFDidgvoezrf249sfsf}). Ce résultat est très utile par la suite dans la preuve du théorème \ref{theo:theo principal ou pas}.

\subsubsection*{Section \ref{etudedanslecasdevaluationdicr24925f}}

On étudie en détails le cas où le corps $p$-adique de base $K$ est de valuation discrète. Dans ce cas, {\it tout groupe $p$-divisible sur $\O_K$ est isogène à un groupe de type HN}  car l'algorithme de descente s'arrête en temps fini (théo. \ref{theo:filtration HN val disc}).
Il en résulte aussitôt que, dans ce cas, le polygone renormalisé $\HN(H)$ est un polygone à points de rupture de coordonnées entières.

Lorsque, de plus, le corps résiduel de $K$ est parfait, on interprète les filtrations de Harder-Narasimhan en termes de théorie de Hodge $p$-adique. On en déduit deux résultats clefs pour la suite. Tout d'abord, le corollaire \ref{coro:filtration determinee par HT} qui dit que la filtration à isogénie près de $H$ peut se retrouver à partir de ses périodes de Hodge-Tate
$$
\a_H: T_p (H)\ldrt \omega_{H^D}\otimes \O_{\widehat{\overline{K}}}.
$$
On obtient ensuite l'inégalité entre polygone de Harder-Narasimhan renormalisé et polygone
de Newton de la fibre spéciale de la proposition \ref{prop:inegalite HN Newton cas valuation discrete}. On peut la penser comme étant une forme de {\it semi-continuité} de la variation d'un polygone \og à la Grothendieck\fg{}.
La suite de ce texte est notamment consacrée à généraliser ce type de résultat lorsque $K$ n'est plus de valuation discrète.

\subsubsection*{Section \ref{sec:Filtration des modules de HT}}

Dans cette section, le corps $K$ est algébriquement clos par contraste avec le cas précédent, où $K$ était de valuation discrète. 
On définit et étudie de nouveaux objets d'algèbre linéaire qui remplacent les groupes $p$-divisibles: les {\it modules de Hodge-Tate}. Du point de vue de la géométrie des espaces de modules, cela revient à remplacer le fibré $\omega_{H^D}$ par l'image de $\a_H\otimes 1$ (cf. rem. 5.9 de \cite{GroupesAnalytiquesII}). Les résultats des sections précédentes sur les groupes $p$-divisibles s'adaptent à ce cadre. Le point principal est le théorème \ref{theo:essentiel} qui dit que {\it l'algorithme de descente converge dans ce cadre d'algèbre linéaire}: tout module de Hodge-Tate entier est isogène à un module de Hodge-Tate de type HN.

\subsubsection*{Section \ref{sec:Retour aux gpdiv}}

Le corps $K$ est encore algébriquement clos.
On montre, en utilisant les résultats précédents sur les modules de Hodge-Tate, que 
{\it le polygone renormalisé $\HN(H)$ est un polygone à coordonnées de rupture entières} (théo. 
\ref{theo:poly renormalise est un poly}), même si à priori $H$ n'est pas forcément isogène à un groupe de type HN contrairement au cas de valuation discrète. 
On montre, de plus, que ce polygone $\HN (H)$ est en dessous du polygone de Newton (version concave) de la fibre spéciale de $H$ (théo. \ref{theo:inegalite Newton et HN}):
$$
\HN(H)\leq \Newt (H_k)^\diamond.
$$ 
La preuve utilise la courbe (\cite{Courbe}) et une notion de polygone de Harder-Narasimhan d'une modification de fibrés admissible sur celle-ci. Ce résultat montre en particulier que {\it si $H_k$ est isocline, alors $\HN (H)$ est une droite de pente $\frac{\dim H}{\htt \, H}$.}
C'est un ingrédient important du théorème principal sur les espaces de périodes.

\subsubsection*{Section \ref{sec:application aux espaces de modules}}

On démontre le théorème principal (théo. \ref{theo:theo principal ou pas}) 
qui dit que si $H$ sur $\O_K$ a sa fibre spéciale $H_k$ simple à isogénie près, alors 
$H$ est isogène à un groupe $p$-divisible semi-stable. Cela implique le théorème principal cité précédemment.

\subsubsection*{Section \ref{sec:stratification de HN des grassmaniennes}}

On définit des {\it stratifications de Harder-Narasimhan des Grassmaniennes $p$-adiques} en utilisant les modules de Hodge-Tate de la section \ref{sec:Filtration des modules de HT}. Les strates sont des sous-diamants localement fermés dont on calcul la dimension (prop. \ref{prop:calcul dimension strates HN}). En utilisant l'inégalité entre polygones de Harder-Narasimhan et polygone de Newton de la section \ref{sec:Retour aux gpdiv}, on obtient des inclusions entre strates de Newton, telles que définies dans \cite{CaraianiScholze}, et de Harder-Narasimhan. Par exemple, {\it la strate basique est contenue dans l'ouvert semi-stable.} On démontre de plus que {\it les strates non semi-stables sont paraboliquement induites} (prop. \ref{prop:induction parabolique I}). 

Par tiré en arrière via le morphisme de périodes de Hodge-Tate, ces stratifications définissent des stratifications des variétés de Shimura $p$-adiques (sec. \ref{sec:stratifications varietes de Shimura}). On espère que ces stratifications permettent d'étudier la cohomologie des variétés de Shimura dans la lignée des travaux de Caraiani-Scholze (sec. \ref{sec:perspectives}).
\vspace{.6cm}

Depuis une première version de cet article, de nombreux travaux ont été effectués sur les filtrations de Harder-Narasimhan et la théorie de Hodge $p$-adique. On a en particulier remplacé l'utilisation des espaces de Banach-Colmez (\cite{Colmez2}) par celle de la courbe (\cite{LivreIso}) dans la preuve du théorème \ref{theo:inegalite Newton et HN}. Dans la section \ref{sec:stratification de HN des grassmaniennes} on utilise la théorie des diamants de Scholze (\cite{ScholzeCohomologyDiamonds}) afin de mettre des structures géométriques sur les strates de Harder-Narasimhan. Du point des filtrations de Harder-Narasmihan, citons les travaux de Shen (\cite{XuHN}), de Cornut et Peche Irissarry (\cite{CornutMacarena})
et de Levin et Ericksson (\cite{LevinEricksson}). Du point de vue des applications des fonctions degré des groupes plats finis aux variétés de Shimura citons les travaux de Bijakowski, Pilloni et Stroh (\cite{BPS}). Du point de vue des domaines fondamentaux pour l'action des correspondances de Hecke citons les travaux de Shen (\cite{XuCell}).
Enfin, notons les application à la géométrie diophantienne de Mocz (\cite{Lucia}).

\section{Le polygone de Harder-Narasimhan renormalisé d'un groupe
  $p$-divisible}\label{ChapitreIcFI359}
  
On reprend les notations de \cite{HNgp}. Soit $K|\Qp$ valué complet pour une valuation $v$ à valeurs dans $\R$, normalisée de manière à ce que $v(p)=1$.
{\it On ne suppose pas que la valuation soit discrète.} 
 Rappelons que si $G$ est un schéma en groupe (commutatif) fini et plat sur $\spec (\O_K)$, d'ordre une puissance de $p$ non nul, on note 
\begin{eqnarray*}
\deg (G) &=& \sum_i v(a_i) \ \text{ si }\ \omega_G\simeq \bigoplus_i \O_K /(a_i), \\
\mathrm{ht} (G ) &=& n \ \text{ si } \ |G|=p^n, \\
\mu (G) &=& \frac{\deg (G)}{\htt (G)}  \in [0,1]. 
\end{eqnarray*}
Les filtrations de Harder-Narasimhan sont prises relativement à la fonction pente $\mu$. On note 
$$
\HN (G): [0, \htt (G)]\ldrt [0,\deg (G)]
$$
le polygone de Harder-Narasimhan de $G$.

\subsection{Un résultat de convergence}

\'Etant donné un groupe $p$-divisible $H$ sur $\spec (\O_K)$, on va définir une fonction concave 
obtenue par un procédé de renormalisation à partir de la collection de polygones 
$$
 \HN (H[p^n]), \ n\geq 1.
$$

\begin{defi}[produit de convolution tropical]\label{def:produit de convolution tropical}
Soient $h,h'\geq 0$ deux nombres réels.
Soit $f$, resp. $g$, une fonction bornée sur l'intervalle $[0,h]$,
resp. $[0,h']$, à valeurs  dans $\R$. On pose
\begin{eqnarray*}
f\circledast g: [0,h+h'] &\ldrt & \R \\
x & \longmapsto & \underset{a+b=x}{\sup} f(a)+g(b)
\end{eqnarray*}
où $a,b\in \R$ satisfont $0\leq a\leq h$ et $0\leq b\leq h'$.
\end{defi}

On vérifie que:
\begin{enumerate}
\item 
la convolution tropicale de deux
fonctions concaves est concave,
\item la transformée  de Legendre, analogue tropical de la transformée de Fourier (\cite{Courbe} sec. 1.5.1), de $f\circledast g$ est la somme des transformées de Legendre de $f$ et $g$,
\item si $f$ et $g$ sont des polygones, i.e. des fonctions affines par morceaux, alors $f\circledast g$ est un polygone obtenu par \og concaténation\fg{}. En particulier, si les abscisses de rupture de $f$ et $g$ sont des entiers, ce qui est le cas pour le polygone de Harder-Narasimhan d'un groupe plat fini,   dans la définition \ref{def:produit de convolution tropical} on peut se restreindre à prendre les variables $x,a,b$ dans $\N$.
\end{enumerate}

Voici une variante de la proposition 8 de \cite{HNgp}.

\begin{prop}\label{DKGSISG92447EGsokotriplosmp}
Pour une suite exacte de schémas en groupes finis et plats sur $\spec (\O_K)$
 $$0\ldrt G'\ldrt G\ldrt G''\ldrt 0$$
 on a 
$$
\mathrm{HN} (G)\leq \mathrm{HN}(G')\circledast \mathrm{HN} (G'')
$$
avec égalité si la suite est scindée.
\end{prop}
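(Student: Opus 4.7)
The plan is to use the characterisation of $\HN(G)$ as the concave upper envelope on $[0,\htt(G)]$ of the set of points $\{(\htt(K),\deg(K))\mid K\subset G \text{ finite flat closed subgroup}\}$. Since the tropical convolution of concave functions is concave, for the inequality it is enough to check that each such point lies below $\HN(G')\circledast \HN(G'')$.

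Fix such a $K\subset G$ and form the schematic closures $K'$ and $K''$ of the generic fibres of $K\cap G'$ and of the image of $K$ in $G''$; these are finite flat closed subgroups of $G'$ and $G''$ respectively. Exactness on the generic fibres yields $\htt(K)=\htt(K')+\htt(K'')$. A variant of the proof of Proposition~8 of \cite{HNgp}---whose core ingredient is the monotonicity of $\deg$ along the isogeny $K/K'\to K''$, even though $K/K'$ need not itself be flat---then gives
\[
\deg(K)\;\le\;\deg(K')+\deg(K'').
\]
The defining property of $\HN(G')$ and $\HN(G'')$ yields $\deg(K')\le \HN(G')(\htt(K'))$ and $\deg(K'')\le \HN(G'')(\htt(K''))$, so on setting $a=\htt(K')$ and $b=\htt(K'')$,
\[
\deg(K)\;\le\;\HN(G')(a)+\HN(G'')(b)\;\le\;\bigl(\HN(G')\circledast \HN(G'')\bigr)(\htt(K)),
\]
which establishes the inequality.

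For equality in the split case $G\simeq G'\oplus G''$, given any finite flat subgroups $L'\subset G'$ and $L''\subset G''$, the direct sum $L'\oplus L''$ is a finite flat subgroup of $G$ with $\htt(L'\oplus L'')=\htt(L')+\htt(L'')$ and $\deg(L'\oplus L'')=\deg(L')+\deg(L'')$, by additivity of $\deg$ in the split short exact sequence. Letting $L'$ and $L''$ range over the HN sub-objects of $G'$ and $G''$ produces flat subgroups of $G$ attaining every break point of $\HN(G')\circledast \HN(G'')$; the concave envelope description of $\HN(G)$ therefore forces the reverse inequality $\HN(G)\geq \HN(G')\circledast \HN(G'')$, whence equality.

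The only serious obstacle sits in the first step: scheme-theoretic kernels and images between finite flat group schemes over $\O_K$ need not themselves be flat, and the passage to schematic closures must be paid for by a controlled loss of degree. The mechanism for this control---the degree comparison along an isogeny between a flat model and its schematic image---is exactly the substance of Proposition~8 of \cite{HNgp}, and its adaptation to the polygon-valued statement here is essentially cosmetic.
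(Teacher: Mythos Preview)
Your argument for the inequality is essentially identical to the paper's: take a finite flat subgroup, pass to schematic closures in $G'$ and $G''$, use the degree inequality from \cite{HNgp} for the resulting generically-exact sequence, and bound by the two Harder--Narasimhan polygons. The paper's own proof omits the split-case equality entirely, so your treatment of it via direct sums of HN sub-objects is an addition, not a divergence.
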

\begin{proof}
Soit $M\subset G$ un sous-groupe plat fini. On note $M'$, resp. $M''$, l'adhérence schématique de $M_K \cap G'_K$ dans $G'$, resp. de l'image de $M_K$ dans $G''_K$ dans $G''$. Il y a une suite 
$$
0\ldrt M'\ldrt M\ldrt M''\ldrt 0
$$
qui devient exacte en fibre générique. On en déduit que 
\begin{eqnarray*}
\deg (M) &\leq & \deg (M') + \deg (M'') \\
 &\leq & \HN (G') (\htt (M')) + \HN (G'') (\htt (M'')).
\end{eqnarray*}
En passant à la borne supérieure sur tous les sous-groupes plats finis $M$ de $G$, on obtient  le résultat.
\end{proof}

Voici maintenant un résultat élémentaire qui va nous permettre de définir le \og polygone\fg{} renormalisé d'un groupe $p$-divisible.

\begin{prop}\label{FJQGpqt92571504F19751906}
Soit $h$ un nombre réel strictement positif et $\ph_n : [0,nh]\ldrt
\R_+$, $n\geq 1$, une suite de fonctions concaves bornées telles que
pour tous $n,m\geq 1$ 
$$
\ph_{n+m} \leq \ph_n\circledast \ph_m.
$$
 Alors, la suite de fonction
\begin{eqnarray*}
[0,h] & \ldrt & \R \\
x &\longmapsto & 
\frac{1}{n} \,\ph_n (nx)
\end{eqnarray*}
converge uniformément vers une fonction concave sur l'intervalle
$[0,h]$ égale à la borne inférieure de cette suite. 
\end{prop}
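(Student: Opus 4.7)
The plan is to adapt Fekete's subadditive lemma to the rescaled concave functions
$$
\psi_n(x) := \frac{1}{n}\varphi_n(nx), \quad x\in[0,h],
$$
which are nonnegative and concave on $[0,h]$. A preliminary step is to secure a uniform bound on the $\psi_n$: iterating the hypothesis gives $\varphi_n \leq \varphi_1^{\circledast n}$, and a trivial induction on the definition of $\circledast$ yields $\varphi_1^{\circledast n}\leq n\|\varphi_1\|_\infty$ on $[0,nh]$, whence $\psi_n\leq M:=\|\varphi_1\|_\infty$ uniformly in $n$ and $x$.

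The core estimate is then obtained by choosing $a=nx$, $b=mx$ in the supremum defining $\varphi_n\circledast \varphi_m$, which gives $\varphi_{n+m}((n+m)x)\leq \varphi_n(nx)+\varphi_m(mx)$, i.e.
$$
(n+m)\,\psi_{n+m}(x)\leq n\,\psi_n(x)+m\,\psi_m(x).
$$
An immediate induction on $k$ yields $\psi_{kn}\leq \psi_n$ for all $k\geq 1$. For a general $m\geq n$, writing $m=qn+r$ with $0\leq r<n$ and iterating the inequality together with the uniform bound $M$, one gets
$$
\psi_m(x)\leq \psi_n(x)+\frac{nM}{m},
$$
so that $\limsup_m \psi_m(x)\leq \psi_n(x)$ for every fixed $n$. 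Combined with the trivial bound $\inf_n \psi_n\leq \liminf_m \psi_m$, this yields pointwise convergence $\psi_m\to \psi:=\inf_n \psi_n$.

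To upgrade to uniform convergence, I would note first that $\psi$ is concave (a pointwise limit of concave functions is concave), hence continuous on $(0,h)$. The uniformly bounded concave family $(\psi_n)$ is equi-Lipschitz on every compact subinterval of $(0,h)$, so uniform convergence on such compacta follows by Arzel\`a-Ascoli. To extend to the closed interval $[0,h]$, the standard device is to fix $\epsilon>0$, partition $[0,h]$ so that $\psi$ oscillates by less than $\epsilon$ on each piece, invoke pointwise convergence at the finitely many nodes, and then sandwich each $\psi_n$ on each subinterval between its chord from below and a secant extrapolated from a neighboring subinterval from above, both of which stay within $O(\epsilon)$ of the corresponding chord of $\psi$ as soon as $n$ is large enough.

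The main substantive step is the Fekete-type iteration encoded by $(n+m)\psi_{n+m}\leq n\psi_n+m\psi_m$; the rest is soft analysis of concave functions. The only subtlety worth flagging is that the hypothesis provides boundedness of each individual $\varphi_n$ but not a uniform bound on the rescaled $\psi_n$, and this gap has to be closed explicitly by iterating the subconvolutive inequality down to $\varphi_1$, as in the first step.
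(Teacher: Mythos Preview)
Your ``core estimate'' is wrong, and the error invalidates the whole Fekete reduction. You write that choosing $a=nx$, $b=mx$ in the supremum defining $\varphi_n\circledast\varphi_m$ yields
\[
\varphi_{n+m}((n+m)x)\leq \varphi_n(nx)+\varphi_m(mx).
\]
But a specific choice of $(a,b)$ gives a \emph{lower} bound on the supremum, not an upper bound: all you get is $\varphi_n(nx)+\varphi_m(mx)\leq (\varphi_n\circledast\varphi_m)((n+m)x)$, which together with the hypothesis $\varphi_{n+m}\leq \varphi_n\circledast\varphi_m$ says nothing about the relation between $\varphi_{n+m}((n+m)x)$ and $\varphi_n(nx)+\varphi_m(mx)$. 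The inequality $(n+m)\psi_{n+m}\leq n\psi_n+m\psi_m$ is in fact false in general: take $h=1$, $\varphi_1(t)=t$ on $[0,1]$, $\varphi_2(t)=\min(t,1)$ on $[0,2]$, $\varphi_3(t)=\min(t,2)$ on $[0,3]$ (one checks the sub-convolutive hypothesis). Then at $x=2/3$, $n=1$, $m=2$, one has $\varphi_3(2)=2$ while $\varphi_1(2/3)+\varphi_2(4/3)=2/3+1=5/3<2$.

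What the paper actually proves is only the diagonal case $\psi_{kn}\leq \psi_k$, and this genuinely requires concavity: one inducts on $n$, and at the inductive step bounds $\varphi_{kn}(a)+\varphi_k(b)$ for \emph{arbitrary} $a+b=(n+1)x$ by writing $x=\tfrac{n}{n+1}\cdot\tfrac{a}{n}+\tfrac{1}{n+1}\cdot b$ and applying concavity of $\varphi_k$ together with the induction hypothesis $\varphi_{kn}(a)\leq n\varphi_k(a/n)$. This concavity step is the missing idea in your argument. Once one has $\psi_{kn}\leq\psi_k$, the general $m=qn_0+r$ case is handled similarly, again using concavity of $\varphi_{qn_0}$ (not a pointwise subadditivity), and yields a uniform bound $\psi_m\leq\psi_{n_0}+C/m$; uniform convergence then comes for free from this explicit remainder, without any Arzel\`a--Ascoli or endpoint patching.
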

\dem
Commençons par montrer que pour tout $k\geq 1$, tout $x\in [0,kh]$ et
tout $n\geq 1$,
\begin{equation}\label{eq:3}
\ph_{kn} (nx) \leq n \ph_k (x).
\end{equation}
Fixons l'entier $k$. 
On procède par récurrence sur $n$, le cas $n=1$ étant
immédiat. Utilisant l'inégalité 
$$
\ph_{k(n+1)} \leq \ph_{kn}\circledast \ph_k,
$$
on obtient 
$$
\ph_{k(n+1)}  ((n+1)x)\leq \sup_{a+b= (n+1)x \atop { 0\leq a \leq nkh
    \atop 0\leq b\leq kh}} \ph_{kn} (a) + \ph_k (b).
$$
Soient donc $a$ et $b$ tels que dans la borne supérieure
précédente. Par hypothèse de récurrence
\begin{equation}\label{eq:1}
\ph_{kn} (a)\leq n \ph_{k} \left ( \frac{a}{n}\right ).
\end{equation}
De plus, en écrivant
$$
x = \frac{n}{n+1}\frac{a}{n} + \frac{1}{n+1} b
$$
et en utilisant la concavité de $\ph_k$, on obtient
\begin{equation}\label{eq:2}
\frac{n}{n+1}\, \ph_k \left (\frac{a}{n} \right ) + \frac{1}{n+1} \,\ph_k
(b)\leq \ph_k (x).
\end{equation}
Combinant les  inégalités (\ref{eq:1}) et (\ref{eq:2})  on obtient
$$
\ph_{kn}(a)+\ph_k (b) \leq (n+1) \ph_k (x)
$$
qui nous donne  l'hypothèse de récurrence au rang $n+1$.

Soit maintenant $n_0\geq 1$ un entier fixé. Pour $n\in \N$, écrivons $n=q(n)
n_0+r(n)$ la division euclidienne de $n$ par $n_0$. De l'inégalité 
$$
\ph_n\leq \ph_{q(n)n_0}\circledast \ph_{r(n)},
$$
on tire que pour $x\in [0,h]$
$$
\ph_n (nx)\leq \sup_{a+b = nx \atop { 0\leq a \leq q(n) n_0 h\atop
    0\leq b\leq r(n)h }} \ph_{q(n)n_0} (a) + \ph_{r(n)}(b).
$$
Pour des nombres $a,b$ comme dans la borne supérieure précédente, on
peut écrire
$$
q(n) n_0 x = \frac{q(n)n_0}{n} .a + \frac{r(n)}{n} .\left (
  \frac{q(n)n_0}{r(n)} b\right ).
$$
Utilisant la concavité de $\ph_{q(n)n_0}$, on obtient

\begin{eqnarray*}
\ph_{q(n)n_0} (q(n)n_0 x) &\geq & \frac{q(n)n_0}{n} \ph_{q(n)n_0} (a) +
\frac{r(n)}{n} \ph_{q(n)n_0} \left ( \frac{q(n)n_0}{r(n)} b\right ) \\
&\geq & \frac{q(n)n_0}{n} \ph_{q(n)n_0}(a).
\end{eqnarray*}
De plus, les fonctions $\ph_{r(n)}$ sont bornées par une constante $C$
indépendante de $n$. On a donc, pour $a,b$ comme précédemment,
\begin{eqnarray*}
\ph_{q(n)n_0} (a) + \ph_{r(n)}(b) &\leq & \frac{n}{q(n)n_0} \ph_{q(n)
  n_0} (q(n) n_0
x) + C \\
&\leq & \frac{n}{n_0} \ph_{n_0} (n_0 x) +C,
\end{eqnarray*}
la deuxième inégalité résultant de l'inégalité (\ref{eq:3}) prouvée au début de
cette démonstration. Au final, on obtient qu'il existe une constante
$C$ telle que pour tout $n\geq 1$
$$
\frac{\ph_n (nx)}{n} \leq \frac{ \ph_{n_0} (n_0 x)}{n_0} + \frac{C}{n}.
$$
On en déduit que 
$$
\underset{n\drt +\infty}{\mathrm{lim}\,\mathrm{sup}} \; \frac{\ph_n (nx)}{n} \leq
\frac{ \ph_{n_0} (n_0 x)}{n_0}.
$$
Cela étant vrai pour tout entier $n_0$, on a donc
$$
\underset{n\drt +\infty}{\mathrm{lim}\,\mathrm{sup}}\; \frac{\ph_n (nx)}{n} = \underset{n\drt +\infty}{\mathrm{lim}\,\mathrm{inf}} \;\frac{\ph_n (nx)}{n}
$$
et la convergence simple s'en déduit. La convergence uniforme se
vérifie en utilisant que la constante $C$ précédente ne dépend pas de
$x$. 
\qed

\subsection{Le polygone renormalisé d'un groupe $p$-divisible}
\label{sec:poly renormalise}

 Soit  maintenant $H$ un groupe $p$-divisible sur $\spec (\O_K)$.
Notons $d$ sa dimension  et $h$ sa hauteur. Pour tout entier
$n\geq 1$, le polygone $\mathrm{HN} (H[p^n])$ a pour point extrémal
$(nh,nd)$. 

\begin{theo}\label{SFKEGI39459sguzSF20refq}
La suite de fonctions 
\begin{eqnarray*}
[0,h] & \ldrt & [0,d] \\
x& \longmapsto &\frac{1}{n} \,\mathrm{HN} (H[p^n])(nx)
\end{eqnarray*}
converge uniformément lorsque $n\drt +\infty$ vers une fonction
continue concave croissante
$$
\mathrm{HN} (H) : [0,h]\ldrt
[0,d]
$$
 égale à la borne inférieure des fonctions précédentes et
vérifiant $HN(H) (0)=0$ et $HN(h)=d$.
\end{theo}
\begin{proof}
Pour des entiers $n,m\geq 1$, 
on applique la proposition \ref{DKGSISG92447EGsokotriplosmp}
aux suites exactes 
$$
0\ldrt H[p^n]\ldrt H[p^{n+m}]\xrig{\; p^{n}\;} H[p^m]\ldrt 0.
$$
On obtient alors que la suite de fonctions $(\mathrm{HN}[p^n])_{n\geq
  1}$ satisfait aux hypothèses de la proposition
\ref{FJQGpqt92571504F19751906}. 
\end{proof}

Pour un sous-schéma en groupes fini et plat $G\subset H$, notons 
$$
n(G) = \inf \{n\geq 1\;|\; G\subset H[p^n]\}.
$$
Alors, la fonction $\mathrm{HN}(H)$ est l'enveloppe concave des points
$$
\left ( \frac{\mathrm{ht}\, G}{n(G) h}, \frac{\deg\, G}{n(G) d} \right )
$$
lorsque $G$ parcourt les sous-groupes plats finis de $H$.

\begin{rema}
Ce résultat est à comparer avec le théorème 4.1.14 de \cite{Chen1} sur les
polygones de Harder-Narasimhan limites associés aux puissances
tensorielles d'un fibré hermitien ample.
\end{rema}

\begin{rema}
On va en fait montrer (théo. \ref{theo:poly renormalise est un poly}) que la fonction concave $\HN(H)$ est un polygone à points de ruptures de coordonnées entières. Cela justifie la terminologie de \og polygone\fg{} pour cette fonction concave.
\end{rema}

On remarquera que, si $H^D$ désigne le dual de Cartier de $H$, pour $x\in [0,\mathrm{ht}\, H]$
$$
\mathrm{HN} (H^D)(x) = \mathrm{HN} (H) ( \mathrm{ht}\, H-x)-\dim H +x.
$$

\subsection{Invariance par isogénie}

\begin{prop}\label{KGDSGIeqerET93}
Pour $H_1$ et $H_2$ deux groupes $p$-divisibles isogènes sur
$\O_K$ on a
$$
\mathrm{HN} (H_1) = \mathrm{HN} (H_2).
$$ 
\end{prop}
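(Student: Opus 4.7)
The strategy is to reduce to a single isogeny and then exploit the renormalization $\HN(H)(x) = \lim_n \tfrac{1}{n}\HN(H[p^n])(nx)$, which forces differences of size $O(1)$ at finite level to vanish in the limit. More precisely, I aim to produce a constant $C$, depending only on the isogeny (not on $n$), such that $|\HN(H_1[p^n])(y) - \HN(H_2[p^n])(y)| \leq C$ for all $y \in [0,nh]$ and all $n$ large enough; the conclusion then follows immediately from Theorem \ref{SFKEGI39459sguzSF20refq} by dividing by $n$.

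The isogeny relation is symmetric (any isogeny admits a quasi-inverse up to multiplication by $p^N$), so we may pick a single $f : H_1 \to H_2$ and set $K := \ker f$, killed by some $p^N$. For $n \geq N$, the long exact fppf sequence of $p^n$-torsion attached to $0 \to K \to H_1 \to H_2 \to 0$, together with the surjectivity of $p^n$ on each $H_i$ (hence $H_i/p^n H_i = 0$), gives
$$
0 \to K \to H_1[p^n] \to H_2[p^n] \to K \to 0.
$$
Setting $L := H_1[p^n]/K$, this decomposes into two short exact sequences of finite flat group schemes sharing $L$ as middle term:
$$
0 \to K \to H_1[p^n] \to L \to 0 \qquad \text{and} \qquad 0 \to L \to H_2[p^n] \to K \to 0.
$$

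Applying Proposition \ref{DKGSISG92447EGsokotriplosmp} to each gives the same upper bound $\HN(H_i[p^n]) \leq \HN(K) \circledast \HN(L)$ for $i=1,2$. For lower bounds, the inclusion $L \subset H_2[p^n]$ yields $\HN(H_2[p^n])(y) \geq \HN(L)(y)$ on $[0, \htt(L)]$; and subgroups of $H_1[p^n]$ containing $K$ correspond to subgroups of $L$, giving $\HN(H_1[p^n])(y + \htt(K)) \geq \HN(L)(y) + \deg(K)$ on the same interval. Since the slopes of HN polygons of finite flat group schemes lie in $[0,1]$ and $\deg(K) \leq \htt(K)$, an elementary concavity check shows that both $\HN(H_i[p^n])$ lie within $O(\htt(K))$ of $\HN(L)$ on the subinterval $[\htt(K), nh - \htt(K)]$, hence within $O(\htt(K))$ of each other there. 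Concavity together with the slope bound extends the comparison to the boundary layers of width $\htt(K)$ at each end, up to an absolute constant.

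The resulting uniform bound $|\HN(H_1[p^n]) - \HN(H_2[p^n])| \leq C$, divided by $n$, forces $\HN(H_1) = \HN(H_2)$. The main technical point is the bookkeeping at the boundary layers of width $\htt(K)$, where the two extension structures (with $K$ appearing as subgroup of $H_1[p^n]$ versus as quotient of $H_2[p^n]$) are not symmetric; in the interior the comparison to the common term $\HN(L)$ is direct, and this asymmetry only adds a bounded contribution at each end that disappears after renormalization.
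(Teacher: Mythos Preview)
Your argument is correct and follows the same renormalization strategy as the paper: compare $\HN(H_1[p^n])$ and $\HN(H_2[p^n])$ up to an $O(1)$ error that disappears after dividing by $n$. The difference is in the choice of intermediate object. The paper uses the \emph{larger} group $p^{-n}G$ (with $G=\ker f$), which contains $H_1[p^n]$ and surjects onto $H_2[p^n]$; from $\HN(H_1[p^n])\leq \HN(p^{-n}G)\leq \HN(G)\circledast\HN(H_2[p^n])$ one gets the one-sided inequality $\HN(H_1)\leq\HN(H_2)$ directly, and then appeals to symmetry of the isogeny relation. You instead use the \emph{smaller} group $L=H_1[p^n]/K$, which is a quotient of $H_1[p^n]$ and a subgroup of $H_2[p^n]$, and bound both $\HN(H_i[p^n])$ against $\HN(L)$ simultaneously. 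Your route costs a little more bookkeeping at the boundary layers (which you correctly identify), whereas the paper's avoids this entirely by proving only one direction; on the other hand your four-term sequence makes the symmetry between $H_1$ and $H_2$ visible at finite level rather than invoking it at the end. Both are the same idea and equally valid.
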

\dem
L'idée est que le polygone du noyau d'une isogénie entre nos deux
groupes $p$-divisibles se fait \og manger \fg{} par le processus de
renormalisation. 
Soit $f:H_1\drt H_2$ une isogénie de noyau $G$. Pour tout entier $n\geq 1$, on a deux suites
exactes 
$$
0\ldrt G\ldrt p^{-n} G\ldrt H_2[p^n] \ldrt 0
$$
$$
0\ldrt H_1[p^n]\ldrt p^{-n} G\ldrt G \ldrt 0.
$$
La première suite exacte implique que 
$$
\mathrm{HN} (p^{-n}G)\leq \mathrm{HN} (G)\circledast \mathrm{HN} (H_2[p^n]).
$$
Soit $C$ une constante telle que $\forall x\in [0,\mathrm{ht}\, G]$,
$\mathrm{HN} (G)(x)\leq C$. On a donc, pour tout $x\in [0,\mathrm{ht}\, H_2]$,
\begin{eqnarray*}
\HN (p^{-n} G)(nx) &\leq &\underset{a+b=nx \atop { 0\leq a\leq
    \mathrm{ht}\, G \atop 0\leq b\leq \, n\htt \, H_2}}{\sup} \HN
(G)(a)+ \HN (H_2[p^n])(b) \\
&\leq & C+ \underset{0\leq b\leq nx}{\sup} \HN (H_2[p^n])(b) \\
&=& C+ \HN (H_2[p^n]) (nx).
\end{eqnarray*}
On en déduit que, pour tout $x\in [0,\htt\, H_2]$,
$$
\frac{1}{n} \HN (p^{-n} G)(nx)\leq \frac{C}{n} + \frac{1}{n} \HN (H_2[p^n])(nx).
$$
La partie gauche de la seconde suite exacte, l'inclusion
$H_1[p^n]\hookrightarrow p^{-n}G$, implique que pour $x\in[0,\htt\,
  H_1]$,
$$
\HN (H_1[p^n])(nx)\leq \HN (p^{-n}G) (nx).
$$
Donc si $h=\htt\, H_1=\htt\, H_2$, pour $x\in [0,h]$ on a 
$$
\frac{1}{n} \HN (H_1[p^n])(nx) \leq \frac{1}{n}\HN (H_2[p^n])(nx) + \frac{C}{n},
$$
d'où
$$
\HN (H_1)\leq \HN(H_2).
$$
Par réflexivité de la relation d'isogénie on  en déduit le résultat.
\qed

\subsection{Semi-continuité}

\begin{prop}\label{DSQDFKqsfuetD35xezmicontienutzskdfuz}
Soit $F|\Qp$ une extension valuée complète pour une valuation discrète
et $\X$ un $\spf (\O_F)$ schéma formel localement formellement de type fini. Soit $H$ un groupe $p$-divisible sur $\X$ de dimension $d$ et
de hauteur $h$ constantes. Notons $\X^{an}$ la fibre générique de $\X$ comme $F$-espace analytique de Berkovich. Alors, si $\mathcal{P}:[0,h]\ldrt [0,d]$ est une
fonction concave telle que $\mathcal{P}(0)=0$ et $\mathcal{P}(h)=d$,
$$
\{x\in |\X^{an}|\;|\; \mathrm{HN} (H_x)\geq \mathcal{P}\}
$$
est un fermé de $|\X^{an}|$. 
\end{prop}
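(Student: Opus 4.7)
The strategy is to reduce the question to the analogous semi-continuity statement at finite level, i.e.\ for the finite flat group schemes $H[p^n]$, which is the kind of result established in \cite{HNgp}. The key is Theorem \ref{SFKEGI39459sguzSF20refq}: the renormalised polygon $\mathrm{HN}(H_x)$ is not merely a limit but in fact the \emph{infimum} of the rescaled polygons $\tfrac{1}{n}\mathrm{HN}(H_x[p^n])(n\cdot)$. Because this limit is monotone downward, the locus where it stays above a threshold is automatically an intersection of the loci at each finite level.

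Concretely, for a concave $\mathcal{P}:[0,h]\drt[0,d]$ with $\mathcal{P}(0)=0$ and $\mathcal{P}(h)=d$, I would argue that the inequality $\mathrm{HN}(H_x)\geq \mathcal{P}$ is equivalent, by the infimum characterisation, to
$$
\frac{1}{n}\mathrm{HN}(H_x[p^n])(ny)\geq \mathcal{P}(y)\quad\text{for all } n\geq 1 \text{ and all } y\in[0,h].
$$
Setting $\mathcal{Q}_n(z):=n\mathcal{P}(z/n)$ on $[0,nh]$, which is again concave with $\mathcal{Q}_n(0)=0$ and $\mathcal{Q}_n(nh)=nd$, this rewrites as $\mathrm{HN}(H_x[p^n])\geq \mathcal{Q}_n$ on $[0,nh]$. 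Thus
$$
\{x\in |\X^{an}|\,:\, \mathrm{HN}(H_x)\geq \mathcal{P}\}=\bigcap_{n\geq 1}\{x\in |\X^{an}|\,:\, \mathrm{HN}(H_x[p^n])\geq \mathcal{Q}_n\}.
$$

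It then suffices to show that each set in this intersection is closed. This is precisely the semi-continuity of the Harder-Narasimhan polygon for a finite flat group scheme of $p$-power order on a Berkovich space, which is one of the main technical outputs of \cite{HNgp} (the function $\mathrm{HN}(G_x)$ is upper semi-continuous in $x$, being a supremum over finitely many values of $\deg$ for the Raynaud generic fibre, and $\deg$ itself is upper semi-continuous on analytic loci of the Hilbert-type spaces parameterising finite flat subgroup schemes). An intersection of closed subsets of $|\X^{an}|$ being closed, the proposition follows.

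There is essentially no analytic obstacle here beyond citing the finite-level statement from \cite{HNgp}; the whole point is that the infimum formula of Theorem \ref{SFKEGI39459sguzSF20refq} turns a potentially delicate passage-to-the-limit into a trivial intersection of already established closed conditions. The only minor care required is to check that the rescaled polygons $\mathcal{Q}_n$ remain admissible threshold polygons (concave, correct endpoints), which is immediate from $\mathcal{P}$ being so.
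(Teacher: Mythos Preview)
Your proposal is correct and follows exactly the same approach as the paper: rewrite the locus as the intersection $\bigcap_{n\geq 1}\{x : \mathrm{HN}(H[p^n]_x)\geq \mathcal{Q}_n\}$ using the infimum characterisation of Theorem~\ref{SFKEGI39459sguzSF20refq}, then invoke the finite-level semi-continuity result (th\'eor\`eme~3 of \cite{HNgp}). The paper's proof is in fact a two-line version of what you wrote.
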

\dem
Cet ensemble se réécrit sous la forme
$$
\bigcap_{n\geq 1} \big \{x\in \X^{an}\;|\; \mathrm{HN} (H[p^n]_x) \geq n
\mathcal{P}(n\bullet) \big \}
$$
qui est fermé d'après le théorème 3 de \cite{HNgp}.
\qed
\\

En général l'ensemble précédent n'a aucune raison d'être le fermé
sous-jacent à un domaine
analytique fermé dans $\X^{an}$ et n'est donc pas à priori associé à un ouvert
admissible de l'espace rigide $\X^{rig}$.

\subsection{Inégalité avec le polygone de Hodge}

Du théorème 5 de \cite{HNgp} on tire la proposition suivante.

\begin{prop}
Pour $H$ un groupe $p$-divisible sur $\O_K$ on a 
$$
\HN (H)\leq \mathrm{Hodge} (H)^\diamond
$$
où $\mathrm{Hodge} (H)^\diamond$ est la \og version concave\fg{} du polygone de Hodge de $H$, de pente $1$ sur $[0,\dim H]$ et $0$ sur $[\dim H, \htt\, H]$.
\end{prop}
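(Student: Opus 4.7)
Le plan est d'appliquer le th\'eor\`eme~5 de \cite{HNgp} \`a chaque sch\'ema en groupes fini et plat $H[p^n]$, puis de passer \`a la limite renormalis\'ee via le th\'eor\`eme~\ref{SFKEGI39459sguzSF20refq}. Notons $d=\dim H$ et $h=\htt H$. Au niveau fini, le th\'eor\`eme~5 de \cite{HNgp} fournit en effet
$$\HN(H[p^n]) \leq \mathrm{Hodge}(H[p^n])^\diamond.$$

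Le point cl\'e \`a observer est que la renormalisation $\varphi \mapsto \frac{1}{n}\varphi(n\cdot)$ laisse le polygone de Hodge concave invariant. En effet, $H[p^n]$ est de hauteur $nh$ et de degr\'e $nd$ (puisque $\omega_{H[p^n]}\simeq (\O_K/p^n)^d$), de sorte que sa version concave du polygone de Hodge s'\'ecrit $\mathrm{Hodge}(H[p^n])^\diamond(y)=\min(y,nd)$ sur $[0,nh]$. Il en r\'esulte, pour tout $x\in [0,h]$,
$$\frac{1}{n}\,\mathrm{Hodge}(H[p^n])^\diamond(nx) \;=\; \min(x,d) \;=\; \mathrm{Hodge}(H)^\diamond(x).$$

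En \'evaluant l'in\'egalit\'e au niveau fini en $nx$ et en divisant par $n$, on obtient ainsi pour tout $n\geq 1$
$$\frac{1}{n}\HN(H[p^n])(nx) \;\leq\; \mathrm{Hodge}(H)^\diamond(x),$$
et il ne reste qu'\`a passer \`a la limite $n\to +\infty$, la convergence uniforme du membre de gauche vers $\HN(H)(x)$ \'etant fournie par le th\'eor\`eme~\ref{SFKEGI39459sguzSF20refq}. Il s'agit donc d'un corollaire essentiellement formel combinant le cas fini (\cite{HNgp} th\'eo.~5) et la d\'efinition du polygone renormalis\'e; aucune difficult\'e s\'erieuse n'est \`a pr\'evoir, le seul calcul \`a mener \'etant la v\'erification de l'invariance par renormalisation ci-dessus.
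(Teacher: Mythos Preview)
Your proposal is correct and is precisely the natural unpacking of the paper's one-line justification (``Du th\'eor\`eme 5 de \cite{HNgp} on tire la proposition suivante''): apply the finite-level inequality to each $H[p^n]$, observe that the Hodge polygon is invariant under the renormalisation $\varphi\mapsto \frac{1}{n}\varphi(n\cdot)$, and pass to the limit via the th\'eor\`eme~\ref{SFKEGI39459sguzSF20refq}. There is nothing to add.
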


\section{Groupes $p$-divisibles semi-stables et de type HN sur $\O_K$}
\label{XVKQSFISDghhyqrfQSIZE935mireille}

Dans cette section $K|\Qp$ est un corps valué complet pour une
valuation à valeurs dans $\R$, normalisée de telle manière que $v(p)=1$.

\subsection{Groupes $p$-divisibles semi-stables}

\begin{defi}\label{defi:groupes semi stables}
Soit $H$ un groupe $p$-divisible non nul, de hauteur et de dimension constantes sur un schéma
sur lequel $p$ est nilpotent ou bien un schéma formel $p$-adique. On pose alors
$$
\mu (H) = \frac{\dim H}{\mathrm{ht}\, H}.
$$
\end{defi}

On remarquera que l'invariant $\mu (H)$ ne dépend que de la classe
d'isogénie de $H$. 
Si $H$ est un groupe $p$-divisible sur $\O_K$,  pour tout
entier $n\geq 1$,
$$
\mu (H[p^n])=\mu (H).
$$

\begin{lemm}\label{lestabiltiozgroupsfontianemaroc}
Soit $H$ un groupe $p$-divisible non nul sur $\O_K$. Sont équivalents:
\begin{enumerate}
\item $H[p]$ est semi-stable,
\item pour tout entier $n\geq 1$, $H[p^n]$ est semi-stable,
\item pour tout sous-schéma en groupes fini et plat $G$ de $H$,
  $\mu (G)\leq \mu (H)$.
\end{enumerate}
\end{lemm}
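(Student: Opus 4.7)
The plan is to prove the cyclic chain of implications $(2)\Rightarrow (1)\Rightarrow (3)\Rightarrow (2)$ is overkill; it will be more direct to establish $(1)\Leftrightarrow(2)$ and then $(2)\Leftrightarrow (3)$. The starting remark, which pervades the argument, is that for every $n\geq 1$ the group $H[p^n]$ has height $nh$ and degree $nd$, so $\mu(H[p^n])=d/h=\mu(H)$.

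For $(2)\Rightarrow (1)$ there is nothing to do: take $n=1$. For $(1)\Rightarrow (2)$ I would argue by induction on $n$, using the short exact sequence
$$
0\ldrt H[p]\ldrt H[p^n]\xrig{\;p\;} H[p^{n-1}]\ldrt 0.
$$
By the induction hypothesis, both $H[p]$ and $H[p^{n-1}]$ are semi-stable of the same slope $\mu(H)$. The Harder--Narasimhan polygon of a semi-stable object is the straight segment of slope $\mu(H)$, so by Proposition~\ref{DKGSISG92447EGsokotriplosmp} applied to the sequence above,
$$
\mathrm{HN}(H[p^n])\leq \mathrm{HN}(H[p])\circledast \mathrm{HN}(H[p^{n-1}]),
$$
and the right-hand side is the straight segment of slope $\mu(H)$ on $[0,nh]$. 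Hence every subgroup $G'\subset H[p^n]$ satisfies $\mu(G')\leq\mu(H)$. Since the endpoint of $\mathrm{HN}(H[p^n])$ is $(nh,nd)$, which lies on that segment, $H[p^n]$ is itself semi-stable of slope $\mu(H)$. So extensions of semi-stables of equal slope remain semi-stable, and induction concludes.

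For $(3)\Rightarrow (1)$ it suffices to notice that any finite flat subgroup $G\subset H[p]$ is in particular a finite flat subgroup of $H$, so $\mu(G)\leq \mu(H)=\mu(H[p])$, which says exactly that $H[p]$ is semi-stable. Conversely, for $(2)\Rightarrow (3)$, let $G\subset H$ be a finite flat subgroup. Since $G$ has $p$-power order it is killed by some $p^n$, and hence is contained in $H[p^n]$. By $(2)$ applied at that $n$, the group $H[p^n]$ is semi-stable of slope $\mu(H)$, so $\mu(G)\leq\mu(H[p^n])=\mu(H)$.

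The only non-formal step is the extension stability in $(1)\Rightarrow (2)$, and even this is immediate once one feeds straight-line polygons into the tropical convolution inequality of Proposition~\ref{DKGSISG92447EGsokotriplosmp}; no further input beyond the HN formalism already developed in the previous section is required.
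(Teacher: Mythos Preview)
Your proof is correct and follows essentially the same route as the paper's. The paper also singles out $(1)\Rightarrow(2)$ as the only non-trivial implication and proves it by induction on $n$ via the same exact sequence $0\to H[p]\to H[p^n]\to H[p^{n-1}]\to 0$; the only cosmetic difference is that the paper invokes directly the fact (from \cite{HNgp}) that semi-stable finite flat groups of a fixed slope are closed under extensions, whereas you re-derive this on the spot from the tropical convolution inequality of Proposition~\ref{DKGSISG92447EGsokotriplosmp}.
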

\begin{proof}
Seul le fait que le premier point implique le second n'est pas
évident. Mais cela résulte, par récurrence sur $n$, de ce que la catégorie des groupes plats finis semi-stables de pente fixée est stable par extensions, couplé aux
suites exactes
$$
0\ldrt H[p]\ldrt H[p^{n}]\xrig{\;\times p\;} H[p^{n-1}]\ldrt 0
$$
lorsque $n\geq 1$ varie.
\end{proof}

\begin{defi}\label{defi:groupe semi-stable}
On appelle groupe $p$-divisible semi-stable tout groupe $p$-divisible
sur $\O_K$ satisfaisant aux conditions équivalentes du lemme \ref{lestabiltiozgroupsfontianemaroc}
précédent.
\end{defi}

On notera  que via la dualité de Cartier
$H$ est semi-stable si et seulement si $H^D$ l'est.
La proposition qui suit dit que la classe d'isogénie d'un groupe
$p$-divisible semi-stable vérifie une condition de semi-stabilité dans la catégorie des groupes $p$-divisibles à isogénie près. Cela justifie la terminologie de \og groupe $p$-divisible semi-stable\fg{} utilisée.

 \begin{prop}\label{DKGSDIqfiqPTidgddt483}
 Soit $H$ un groupe $p$-divisible semi-stable. Soit 
 $H'$ un groupe $p$-divisible isogène à $H$ et muni d'une filtration par des groupes $p$-divisibles 
$$
0\ldrt X\ldrt H' \ldrt Y\ldrt 0
$$
avec $X\neq 0$. 
Alors, $\mu (X)\leq \mu (H)$.
 \end{prop}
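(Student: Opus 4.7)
The strategy is to translate the semi-stability hypothesis on $H$ into a statement about the renormalized polygon $\HN(H)$, transport it to $H'$ via isogeny invariance, and then test it against the subgroups $X[p^n]$.

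First I would observe that if $H$ is semi-stable, then by Lemme \ref{lestabiltiozgroupsfontianemaroc} every $H[p^n]$ is semi-stable. A semi-stable finite flat group scheme has linear Harder-Narasimhan polygon, so
$$
\HN(H[p^n])(y) = \mu(H[p^n])\cdot y = \mu(H)\cdot y \quad \text{for } y\in [0, n\htt H],
$$
using the obvious identity $\mu(H[p^n])=\mu(H)$. Passing to the renormalization provided by Th�or�me \ref{SFKEGI39459sguzSF20refq} gives
$$
\HN(H)(x) = \mu(H)\cdot x \quad \text{for } x\in [0,\htt H],
$$
i.e. the renormalized polygon of $H$ is the straight line of slope $\mu(H)$. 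By the isogeny invariance established in Proposition \ref{KGDSGIeqerET93}, the same holds for $H'$, since $\htt H' = \htt H$:
$$
\HN(H')(x) = \mu(H)\cdot x.
$$

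Next I would exploit the fact that $X$ is a $p$-divisible subgroup of $H'$, so for every $n\geq 1$ the finite flat group scheme $X[p^n]$ is a subgroup of $H'[p^n]$ of height $n\htt X$ and degree $n\dim X$. By the very definition of the Harder-Narasimhan polygon of a finite flat group scheme,
$$
n\dim X = \deg X[p^n] \;\leq\; \HN(H'[p^n])(n\htt X).
$$
Dividing by $n$ and letting $n\to +\infty$, Th�or�me \ref{SFKEGI39459sguzSF20refq} gives
$$
\dim X \;\leq\; \lim_{n\to +\infty} \frac{1}{n}\HN(H'[p^n])(n\htt X) \;=\; \HN(H')(\htt X) \;=\; \mu(H)\cdot \htt X.
$$
Since $X\neq 0$, dividing by $\htt X>0$ yields $\mu(X)\leq \mu(H)$, as desired.

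The only subtle point is to make sure that the filtration $0\to X\to H'\to Y\to 0$ by $p$-divisible groups really produces, at each level $p^n$, a finite flat subgroup $X[p^n]\subset H'[p^n]$ of the expected height and degree; this is immediate because a closed immersion of $p$-divisible groups induces closed immersions on all $p^n$-torsion, and the height/dimension of a $p$-divisible group read off $\htt$ and $\deg$ of its $p^n$-torsion with a factor $n$. Everything else is a direct application of the two main results of the previous subsections: the linearity of the renormalized polygon in the semi-stable case and its invariance under isogeny.
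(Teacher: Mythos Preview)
Your argument is correct. The route differs from the paper's, though the two are closely related.

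The paper argues directly inside $H$: writing $H'=H/G$ for a finite flat $G\subset H$, it pulls back $X[p^n]$ to the finite flat subgroup $K_n\subset H$ with $K_n/G=X[p^n]$, computes $\mu(K_n)\to\mu(X)$ as $n\to\infty$, and invokes the semi-stability of $H$ via condition~(3) of Lemme~\ref{lestabiltiozgroupsfontianemaroc} to force $\mu(X)\leq\mu(H)$. No renormalized polygons appear.

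You instead transport the problem to $H'$ by quoting Proposition~\ref{KGDSGIeqerET93} (isogeny invariance of $\HN$) as a black box, obtaining $\HN(H')(x)=\mu(H)\,x$, and then test this against $X[p^n]\subset H'[p^n]$. This is more modular: the ``bounded error under isogeny'' computation that the paper carries out by hand is exactly what was packaged into the proof of Proposition~\ref{KGDSGIeqerET93}, so you avoid repeating it. The paper's version, on the other hand, is self-contained and does not need the renormalization machinery of Th\'eor\`eme~\ref{SFKEGI39459sguzSF20refq} at all --- only the elementary fact that $\mu(K_n)\leq\mu(H)$ for every finite flat $K_n\subset H$.
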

 \dem
 Il faut montrer qu'il n'existe pas de sous-groupe fini et plat $G$ de $H$, ainsi
 qu'un sous-groupe $p$-divisible $X\subset H/G$, tels que
 $\mu (X)>\mu (H)$. Mais si c'était le cas, soit, pour tout $n\geq 1$,
 $K_n\subset H$ le sous-groupe plat fini tel que $G\subset K_n$ et 
 $K_n/G=X[p^n]$. La suite exacte
 $$
 0\ldrt G\ldrt K_n \ldrt X[p^n]\ldrt 0
 $$
 implique que $\deg K_n = \deg G+n\dim X$ et $\mathrm{ht}\, K_n =\mathrm{ht}\,
 G+ n \,\mathrm{ht}\, X$. On a alors
 $$
 \underset{n\drt +\infty}{\lim} \mu (K_n) = \mu (X)
 $$
 et donc pour $n$ grand $\mu (K_n)>\mu (H )$, ce qui est impossible puisque
 $H$ est semi-stable.
 \qed

\subsection{La catégorie des groupes $p$-divisibles isogènes à un groupe
semi-stable de pente donnée à isogénie près}

La catégorie des groupes plats finis sur $\O_K$ semi-stables de pente fixées est abélienne (\cite{HNgp}). On va voir qu'il en est de même pour les groupes $p$-divisibles à isogénie près. {\it On emploie désormais la notation $$\BT_{\O_K}$$ pour la catégorie des groupes $p$-divisibles sur $\O_K$.}

\begin{defi}
Soit $\l\in [0,1]\cap \Q$. On note $\BT_{\O_K}^{ss,\l}\otimes \Q$ la
sous-catégorie pleine de $\BT_{\O_K}\otimes \Q$, les groupes $p$-divisibles à isogénie près sur $\O_K$, formée des groupes
$p$-divisibles isogènes à un groupe semi-stable de pente $\l$.
\end{defi}

Dans l'énoncé qui suit on note $\mathcal{A}b_{\O_K}$ la catégorie des faisceaux de groupes abéliens sur $\spec (\O_K)_{\mathrm{fppf}}$. On note $\mathcal{A}b_{\O_K}\otimes \Q$ pour la catégorie à isogénie près associée. On peut la voir comme le quotient de $\mathcal{A}b_{\O_K}$ par la sous-catégorie épaisse formée des faisceaux annulés par un entier non nul. 

\begin{theo}
Soit $\l\in [0,1]\cap \Q$ fixé. 
\begin{enumerate}
\item 
 La catégorie $\BT_{\O_K}^{ss,\l}\otimes
\Q$ est une sous-catégorie abélienne de $\mathcal{A}b_{\O_K}\otimes \Q$. 
\item  Une suite
$$0\ldrt X\ldrt Y\drt Z\ldrt 0$$
dans $\BT_{\O_K}^{ss,\l}\otimes
\Q$ est exacte si et seulement si elle est isomorphe à une suite
exacte de la catégorie exacte $\BT_{\O_K}$.
\end{enumerate}
\end{theo}
\dem
Soient $X$ et $Y$ deux groupes $p$-divisibles semi-stables tels que
$\mu (X)=\mu (Y)=\l$ et $f:X\drt Y$ un morphisme. Rappelons que la
catégorie des groupes plats finis sur $\O_K$ semi-stables de pente
$\l$ est une sous-catégorie abélienne de 
$\mathcal{A}b_{\O_K}$. Notons,
  pour tout $n\geq 1$,
$$
G_n=\ker f_{|X[p^n]} \subset X[p^n],
$$
un groupe plat fini semi-stable de pente $\l$. Le noyau de $f$
dans $\mathcal{A}b_{\O_K}$ est
$$\ker f =\underset{n\geq 1}{\limi} G_n.$$
 Posons, pour $n\geq 1$,
$$
a_n = \mathrm{ht}\, (G_n/G_{n-1})
$$
où l'on a posé $G_0=0$. \'Etant donné que pour $i\geq j\geq 1$,
$G_i[p^j]=G_j$, si $i\geq j\geq 1$, le morphisme
$$
G_i/G_{i-1} \xrig{\; \times p^{i-j}\;} G_j/G_{j-1}
$$
est un monomorphisme. La suite $(a_n)_{n\geq 1}$ est donc
décroissante. Soit $n_0\in \N$ tel que pour $n\geq n_0$ on ait
$a_n=a_{n_0}$. Alors, pour $i\geq j\geq n_0$,
$$
\times p^{i-j}: G_i/G_{i-1} \iso G_j/G_{j-1}
$$
est un isomorphisme. On vérifie alors que 
$$
\ker f /G_{n_0}=
\underset{n\geq n_0}{\limi} G_n/G_{n_0} \subset X/G_{n_0}
$$
est soit nul (si $a_{n_0}=0$), soit un sous-groupe $p$-divisible semi-stable
de pente $\l$ de $X/G_{n_0}$. La suite exacte associée dans
$\BT_{\O_K}$
$$
0\ldrt \ker f /G_{n_0} \ldrt X/G_{n_0} \ldrt Z\ldrt 0
$$
est telle que $Z = \mathrm{Im}\, f \subset Y$ soit un sous-groupe
$p$-divisible semi-stable de pente $\l$ de $Y$ et $\mathrm{coker}\, f=
Y/Z$. 

On a donc vérifié que le noyau et le conoyau de $f$ dans
$\mathcal{A}b_{\O_K}\otimes \Q$ 
étaient des groupes $p$-divisibles semi-stables de pente $\l$. 
\qed

\subsection{La catégorie des groupes de type HN}

Après avoir étudié la catégorie des groupes $p$-divisibles semi-stables à isogénie près, il est naturel de s'intéresser à leurs filtrations de Harder-Narasimhan.

\begin{defi}\label{defi:type HN}
Un groupe $p$-divisible $H$ sur $\O_K$ est de type HN 
s'il possède  une filtration croissante
$(H_i)_{0\leq i\leq r}$ dans $\BT_{\O_K}$ telle que $H_0=0$,
$H_r=X$, pour $1\leq i\leq r$ le groupe $p$-divisible
$ H_i/H_{i-1}$ est semi-stable et
$$
\mu (H_1/H_0) >\mu (H_2/H_1 ) >\dots >\mu (H_r/H_{r-1}).
$$
\end{defi}

Remarquons que, 
si $H$ est de 
type HN, la filtration précédente est déterminée de façon unique
puisqu'alors pour tout entier $n\geq 1$, la filtration
$(H_i[p^n])_{0\leq i\leq r}$ de $H[p^n]$ est la filtration de
Harder-Narasimhan de $H[p^n]$. Ainsi, {\it $H$ est de type HN si et
seulement si 
les filtrations de Harder-Narasimhan des $H[p^n]$, lorsque
$n$ varie, forment une filtration par des sous-groupes
$p$-divisibles.} On appellera la filtration précédente d'un groupe de
type HN la filtration de Harder-Narasimhan. 
\\

Remarquons également que, d'après la proposition \ref{KGDSGIeqerET93}, si un groupe $p$-divisible $H'$ est isogène à
un groupe de type HN $H$ tel que dans la définition \ref{defi:type HN} précédente, alors
son polygone de Harder-Narasimhan renormalisé $\mathrm{HN} (H')$ est
le polygone concave de pentes $$\mu (H_1/H_0 ) >\mu (H_2/H_1)>\dots
>\mu (H_r/H_{r-1}),$$ avec multiplicités $$\mathrm{ht} ( H_1/H_0),\dots
,\mathrm{ht} (H_r/H_{r-1}).$$
 Réciproquement, on a le résultat suivant.

\begin{prop}\label{KdfjudsfEITfhsfroodju23459sedojour}
Soit $H$ un groupe $p$-divisible sur $\O_K$. Il est de type HN si et seulement si $\mathrm{HN}(H[p])=\mathrm{HN}(H)$. 
\end{prop}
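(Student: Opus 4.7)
The plan is to show, assuming $\HN(H[p])=\HN(H)$, that the Harder--Narasimhan filtrations of the groups $H[p^n]$ are mutually compatible under multiplication by $p$ and assemble into a filtration of $H$ by $p$-divisible subgroups. Write $F_i(G)$ for the $i$-th step of the HN filtration of a plat fini group $G$. The direction \emph{$H$ de type HN $\Rightarrow \HN(H[p])=\HN(H)$} is immediate from the remark following Definition~\ref{defi:type HN}: if $H$ admits an HN filtration $(H_i)$ by $p$-divisible subgroups with $H_i/H_{i-1}$ semi-stable of slope $\mu_i$, then $(H_i[p^n])_i$ is the HN filtration of $H[p^n]$ (its successive quotients being $(H_i/H_{i-1})[p^n]$, semi-stable of slope $\mu_i$), its polygon is $\HN(H[p])^{\circledast n}$, and its renormalization is $\HN(H[p])$.

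For the converse, write $\mu_1>\cdots>\mu_r$ and $h_1,\dots,h_r$ for the slopes and multiplicities of $\HN(H[p])$, and set $G_i = F_i(H[p])$ and $x_i = h_1+\cdots+h_i$. First I would upgrade the hypothesis to the identity $\HN(H[p^n]) = \HN(H[p])^{\circledast n}$ for every $n\geq 1$: iterating Proposition~\ref{DKGSISG92447EGsokotriplosmp} on the exact sequence $0\to H[p]\to H[p^n]\to H[p^{n-1}]\to 0$ gives the upper bound, and by concavity $\HN(H[p])^{\circledast n}(nx) = n\HN(H[p])(x)$; combined with the infimum characterization of $\HN(H)$ and the assumption $\HN(H)=\HN(H[p])$, this forces equality. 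In particular $F_i(H[p^n])$ has height $nx_i$ and degree $n\sum_{k\le i}h_k\mu_k$.

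The technical heart is to apply the equality case of Proposition~\ref{DKGSISG92447EGsokotriplosmp} to $0\to H[p]\to H[p^{n+1}]\to H[p^n]\to 0$ with $M = F_i(H[p^{n+1}])$, $M' = M\cap H[p]$, $M'' = pM$. Since the outer inequality $\deg M \leq (\HN(H[p])\circledast\HN(H[p^n]))(\htt M)$ is now an equality (both sides equalling $(n+1)\sum_{k\le i}h_k\mu_k$), all intermediate inequalities saturate. The equality in the convolution bound forces $(\htt M',\htt M'')$ to be an argmax; strict concavity of $\HN(H[p])$ at the interior break point $x_i$ (distinct slopes $\mu_i>\mu_{i+1}$) makes this argmax unique, namely $(x_i,nx_i)$. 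The equality $\deg M' = \HN(H[p])(x_i)$ then exhibits $M'$ as a plat fini subgroup of $H[p]$ of maximal degree at the break-point height $x_i$, hence $M' = G_i$ by uniqueness of the HN filtration (two such subgroups with identical heights and degrees coincide, using the strict concavity argument applied to $M'\cap G_i$ and $M'+G_i$). Similarly $M'' = F_i(H[p^n])$, and running the same argument on the dual sequence $0\to H[p^n]\to H[p^{n+1}]\to H[p]\to 0$ yields $F_i(H[p^{n+1}])\cap H[p^n] = F_i(H[p^n])$ and $p^n F_i(H[p^{n+1}]) = G_i$.

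Finally, I would set $\widetilde{H}_i = \varinjlim_n F_i(H[p^n])$ along the transition maps just established. One then verifies $\widetilde{H}_i[p^n] = F_i(H[p^n])$ and that $p$ is surjective on $\widetilde{H}_i$ with kernel $G_i$, so $\widetilde{H}_i\subset H$ is a $p$-divisible subgroup of height $x_i$. Each quotient $\widetilde{H}_i/\widetilde{H}_{i-1}$ is a $p$-divisible group whose $p$-torsion $G_i/G_{i-1}$ is semi-stable of slope $\mu_i$, hence itself semi-stable by Lemma~\ref{lestabiltiozgroupsfontianemaroc}; the slopes being strictly decreasing, the filtration $(\widetilde{H}_i)$ exhibits $H$ as of type HN. The main obstacle is the third step: extracting the schematic identifications of $M'$ and $M''$ from the equality case. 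The uniqueness of the optimal decomposition in the tropical convolution pins down the heights, and uniqueness of the HN filtration at break-point heights then pins down the subgroups themselves; the case $r=1$ (where strict concavity fails) is handled directly by Lemma~\ref{lestabiltiozgroupsfontianemaroc}.
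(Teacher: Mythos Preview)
Your argument is correct and shares its skeleton with the paper's proof: both rest on the chain of inequalities
\[
\HN(H[p])\ \geq\ \tfrac{1}{n}\,\HN(H[p^n])(n\bullet)\ \geq\ \HN(H),
\]
so that the hypothesis $\HN(H[p])=\HN(H)$ forces $\HN(H[p^n])=\HN(H[p])^{\circledast n}$ for every $n$. The paper then simply asserts (``On v\'erifie que\dots'') that this polygon identity for all $n$ is equivalent to $H$ being of type HN, without spelling out the verification; you supply it in full.

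Your route to this verification---saturating the convolution inequality of Proposition~\ref{DKGSISG92447EGsokotriplosmp}, using strict concavity at the break points to pin down $\htt M'$ and $\htt M''$, then invoking the modular inequality for $\deg$ from \cite{HNgp} to identify $M'$ and $M''$ with the expected HN steps---is valid. A somewhat lighter alternative, closer to the spirit of Section~\ref{KDSalgoGisdgETP348hhdidncbdsyg}, is to argue inductively on the number of slopes via Lemma~\ref{KQIZRozru3485gouroulemmedescente}: the polygon identity gives $\htt\,F_1(H[p^k])=kh_1$ for all $k$, so the sequence $(a_k)$ in the proof of Lemma~\ref{kourutopresakitekbouibui87814} is constant from $k=1$, hence $\F_H$ is already a $p$-divisible subgroup with $\F_H[p^k]=F_1(H[p^k])$; one then recurses on $H/\F_H$, whose $p^n$-torsion has HN polygon equal to the tail of $\HN(H[p])^{\circledast n}$. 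Both approaches yield the same conclusion; yours is self-contained while the inductive one reuses machinery the paper develops anyway.
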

\begin{proof}
On vérifie que $H$ est de type HN si et seulement si $\forall n\geq 1, \; \mathrm{HN} (H[p^n])=\mathrm{HN} (H[p])$. 
La proposition résulte alors des inégalités valables pour tout $H$
\begin{equation*}
\mathrm{HN} (H[p])\geq \frac{1}{n} \mathrm{HN} ( H[p^n])  (n\bullet ) \geq \mathrm{HN} (H). \qedhere
\end{equation*}
\end{proof}

\begin{prop}\label{exisutnidupsemistabe35129643}
Soient $X$ et $Y$ deux groupes $p$-divisibles de type HN sur $\O_K$ et
$(X_i)_{0\leq i\leq r}$ et $(Y_j)_{0\leq j\leq s}$ leurs filtrations
de Harder-Narasimhan. Soit $f:X\drt Y$ une isogénie. Alors $r=s$ et
$f$ induit pour $1\leq i\leq r$ des isogénies 
\begin{eqnarray*}
f_{|X_i} : X_i & \ldrt &  Y_i \\
X_i/X_{i-1} & \ldrt & Y_i/Y_{i-1}. 
\end{eqnarray*}
\end{prop}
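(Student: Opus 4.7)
The approach has two stages: first, identify the slope--height data of the two HN filtrations by appealing to isogeny-invariance of the renormalized polygon; then, establish $f(X_i)\subseteq Y_i$ by analysing the composite $g_i\colon X_i\hookrightarrow X\xrig{\;f\;}Y\twoheadrightarrow Y/Y_i$ via a slope-comparison argument.

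For the first stage, Proposition~\ref{KGDSGIeqerET93} applied to the isogeny $f$ gives $\HN(X)=\HN(Y)$. Since $X$ and $Y$ are of HN type, the remark following Definition~\ref{defi:type HN} identifies these renormalized polygons with the concave polygons whose strictly decreasing slopes are $\mu(X_j/X_{j-1})$ (resp.\ $\mu(Y_k/Y_{k-1})$) with multiplicities $\htt(X_j/X_{j-1})$ (resp.\ $\htt(Y_k/Y_{k-1})$). Matching break points yields $r=s$, $\mu(X_i/X_{i-1})=\mu(Y_i/Y_{i-1})$ and $\htt(X_i/X_{i-1})=\htt(Y_i/Y_{i-1})$ for each $i$.

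For the second stage, fix $i$ and form $g_i$ as above. By mimicking the argument used in the abelian-category theorem for $\BT_{\O_K}^{ss,\l}\otimes\Q$ (study of the finite flat subgroups $\ker g_i\cap X_i[p^n]$ and the eventual constancy of the heights of their graded pieces), one exhibits, up to isogeny in $\mathcal{A}b_{\O_K}\otimes\Q$, a $p$-divisible subgroup $Z\subseteq Y/Y_i$ which is simultaneously an isogeny-quotient of $X_i$ and represents $\im g_i$. We argue $Z=0$ by contradiction. For every $n$, the inclusion $Z[p^n]\hookrightarrow(Y/Y_i)[p^n]$ forces the largest slope of $\HN(Z[p^n])$ to be at most that of $\HN((Y/Y_i)[p^n])$. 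Renormalizing by $\tfrac{1}{n}\HN(\bullet)(n\bullet)$ and passing to the limit $n\to\infty$ (the initial slope of the decreasing uniform limit of concave polygons vanishing at $0$ equals the infimum of the initial slopes), we deduce that the largest slope of $\HN(Z)$ is $\leq\mu(Y_{i+1}/Y_i)$. Cartier dualising the surjection $X_i\twoheadrightarrow Z$ to an inclusion $Z^D\hookrightarrow X_i^D$ and applying the same bound on the dual side, together with the formula $\HN(H^D)(x)=\HN(H)(\htt H-x)-\dim H+x$, one gets that the smallest slope of $\HN(Z)$ is $\geq\mu(X_i/X_{i-1})$. By Step~1 and the strict inequalities in Definition~\ref{defi:type HN}, however, $\mu(X_i/X_{i-1})=\mu(Y_i/Y_{i-1})>\mu(Y_{i+1}/Y_i)$, a contradiction. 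Hence $Z=0$ and $f(X_i)\subseteq Y_i$.

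To conclude, $f|_{X_i}\colon X_i\to Y_i$ is a morphism between $p$-divisible groups of equal height (Step~1) with finite kernel (a subgroup of $\ker f$), hence an isogeny. Applying the snake lemma to the exact sequences $0\to X_{i-1}\to X_i\to X_i/X_{i-1}\to 0$ and $0\to Y_{i-1}\to Y_i\to Y_i/Y_{i-1}\to 0$ (vertical maps induced by $f$) and using surjectivity of the outer vertical isogenies, one gets the induced isogeny $X_i/X_{i-1}\to Y_i/Y_{i-1}$. The principal obstacle is the slope-comparison argument in Step~2: confirming that $\im g_i$ has, up to isogeny, a genuine $p$-divisible representative inside $Y/Y_i$, and promoting the elementary \emph{subgroup bounds the first HN slope} property from finite flat groups to the renormalized polygons of $p$-divisible groups.
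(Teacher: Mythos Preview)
Your first stage coincides with the paper's. For the second stage, however, the paper takes a much shorter route that sidesteps your ``principal obstacle'' entirely. Since $X$ and $Y$ are of HN type, $(X_i[p^n])_i$ is the Harder--Narasimhan filtration of $X[p^n]$ for every $n$ (and likewise for $Y$); by Step~1 the polygons $\HN(X[p^n])$ and $\HN(Y[p^n])$ agree, so Proposition~10 of \cite{HNgp} (functoriality of the HN filtration for morphisms between finite flat group schemes with equal polygon) gives $f(X_i[p^n])\subseteq Y_i[p^n]$ directly, level by level. Passing to the limit in $n$ yields $f(X_i)\subseteq Y_i$. For the isogeny assertion the paper also argues more cleanly than your ``equal height plus finite kernel'' step: pick $g:Y\to X$ with $f\circ g=p^N$, apply the same reasoning to $g$ to get $g(Y_i)\subseteq X_i$, and conclude from $f_{|X_i}\circ g_{|Y_i}=p^N$.

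Your route via the composite $g_i$ and slope bounds on a $p$-divisible representative $Z$ of $\im g_i$ can be made to work, but the obstacle you flag is genuine: the argument you borrow from the abelian-category theorem for $\BT_{\O_K}^{ss,\l}\otimes\Q$ relies on all the finite flat groups involved being semi-stable of a common slope, which fails for $\ker g_i\cap X_i[p^n]$. One has to argue instead via the generic fibre (where everything is \'etale and the image is visibly $p$-divisible) and then take schematic closures in $Y/Y_i$. Note also that your general assertion that the initial slope of a decreasing uniform limit of concave polygons vanishing at $0$ \emph{equals} the infimum of the initial slopes is false; only the inequality $\leq$ holds, which is fortunately the direction you need. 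The paper's reduction to finite levels buys you all of this for free.
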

\begin{proof}
Puisque $X$ et $Y$ sont isogènes, d'après la proposition \ref{KGDSGIeqerET93}, on a l'égalité 
  $$\mathrm{HN} (X) =\mathrm{HN} (Y).$$ Celle-ci implique que
 $r=s$ et, si $1\leq i\leq r$,
 \begin{eqnarray*}
  \mu (X_i/X_{i-1}) &=&
\mu ( Y_i/Y_{i-1} ) \\
\mathrm{ht} (X_i/X_{i-1}) & = & \mathrm{ht}
(Y_i/Y_{i-1}).
\end{eqnarray*} 
On a donc que, pour tout entier $n\geq 1$, les polygones de
Harder-Narasimhan de $X[p^n]$ et celui de $Y[p^n]$ coïncident. D'après
la proposition 10 de \cite{HNgp}, cela implique que pour $0\leq i\leq
r$, $$f_{|X_i[p^n]} : X_i[p^n]\ldrt Y_i [p^n].$$
 Cela étant vrai pour
tout $n$, $f_{|X_i}: X_i\drt Y_i$. Il reste à vérifier que $f_{|X_i}$
est une isogénie. Mais si $N\in \N$ et  $g:Y\drt X$ sont tels que
$f\circ g =p^N$,  alors $g$ vérifie la même propriété
que $f$, $g_{|Y_i}: Y_i\drt X_i$. On a alors $f_{|X_i}\circ g_{|Y_i} =
p^N$.
\end{proof}


Mettant bout à bout les résultats précédents on obtient le théorème suivant.

\begin{theo}\label{theo:structure groupes type HN}
Soit $H\in \BT_{\O_K}$ isogène à un groupe de type HN. 
\begin{enumerate}
\item $H$ possède une unique filtration de Harder-Narasimhan pour la fonction pente $\mu=\frac{\dim}{\htt}$ dans la catégorie exacte
$\BT_{\O_K}\otimes \Q$.
\item $H$ est semi-stable pour cette fonction pente si et seulement si il est isogène à un groupe $p$-divisible semi-stable.
\item Le polygone de Harder-Narasimhan renormalisé de $H$ (sec. \ref{sec:poly renormalise}) coïncide avec son polygone dans la catégorie \og de Harder-Narasimhan\fg{} $(\BT_{\O_K}\otimes \Q,\mu)$.
\end{enumerate}
\end{theo}

\section{L'algorithme de descente vers un groupe semi-stable}\label{KDSalgoGisdgETP348hhdidncbdsyg}

On garde les hypothèses précédentes. On va montrer que, {\it si la valuation de $K$ est  discrète},  {\it tout groupe $p$-divisible $H$ sur $\O_K$ est isogène à un groupe de type HN} et même produire un  \og algorithme de descente\fg{} explicite pour construire une telle isogénie.

\subsection{Première étape de l'algorithme}

Soit $H$ un groupe $p$-divisible non nul sur $\spec (\O_K)$.
 On pose,  pour un entier $k\geq 1$,
$$
G_k = \text{le premier cran de la filtration de
  Harder-Narasimhan de } H[p^k].
$$
\begin{lemm}\label{KQIZRozru3485gouroulemmedescente}
La suite $(G_k)_{k\geq 1}$ forme une suite croissante de groupes
semi-stables de même pente. De plus, pour tous $i\geq j\geq 1$, la
multiplication par $p^j$ sur $G_i$ a un noyau plat et 
$$
G_i [p^j] = G_j.
$$
\end{lemm}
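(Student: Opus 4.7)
The plan is to establish the three assertions in sequence, relying on the HN theory of flat finite group schemes from \cite{HNgp} together with the convolution inequality of Proposition \ref{DKGSISG92447EGsokotriplosmp}.

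First I show that the first HN slope $\mu_k := \mu(G_k)$ is independent of $k$. Applying Proposition \ref{DKGSISG92447EGsokotriplosmp} to the short exact sequence
\[
0 \to H[p^k] \to H[p^{k+1}] \xrightarrow{\; p^k\;} H[p] \to 0
\]
yields $\HN(H[p^{k+1}]) \leq \HN(H[p^k]) \circledast \HN(H[p])$, so $\mu_{k+1} \leq \max(\mu_k,\mu_1)$. Conversely, both $G_k \subset H[p^k] \subset H[p^{k+1}]$ and $G_1 \subset H[p] \subset H[p^{k+1}]$ are semi-stable subgroups of $H[p^{k+1}]$ of slopes $\mu_k$ and $\mu_1$, so HN maximality forces $\mu_{k+1} \geq \max(\mu_k,\mu_1)$. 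Induction starting from $\mu_1$ gives $\mu_k = \mu_1 =: \mu$ for all $k$. The inclusion $G_k \subset G_{k+1}$ is then immediate: $G_k$ is a semi-stable subgroup of slope $\mu$ in $H[p^{k+1}]$, hence lies in the unique maximal such subgroup, which is $G_{k+1}$.

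For the identity $G_i[p^j] = G_j$ with flat kernel, I reduce it to the single-step claim that $G_{k+1}[p^k] = G_k$ as a flat finite subgroup of $G_{k+1}$ for every $k \geq 1$: once this is known, iterating $G_i[p^j] = G_i[p^{j+1}][p^j] = \cdots = G_{j+1}[p^j] = G_j$ handles the general case. To prove the single step, I first observe that $p(G_{k+1}) \subset G_k$. The scheme-theoretic image $p(G_{k+1}) \subset H[p^k]$ is a quotient of the semi-stable group $G_{k+1}$ of slope $\mu$, hence has slope $\geq \mu$, and as a subgroup of $H[p^k]$ it has slope $\leq \mu$; so it is semi-stable of slope $\mu$, and thus contained in $G_k$ by maximality. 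Iteration gives $p^s(G_i) \subset G_{i-s}$ for $0 \leq s \leq i$, in particular $p^k(G_{k+1}) \subset G_1$. The surjection $G_{k+1} \twoheadrightarrow I := p^k(G_{k+1})$ onto its scheme-theoretic image is then a scheme-theoretically surjective homomorphism of flat finite group schemes over $\O_K$, which is automatically faithfully flat; its scheme-theoretic kernel $G_{k+1}[p^k]$ is therefore a flat finite subgroup of height $\htt(G_{k+1}) - \htt(I)$, containing $G_k$.

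What remains is the numerical identity $r_{k+1} = r_k + r_1$ with $r_k := \htt(G_k)$, which will force $G_{k+1}[p^k] = G_k$ by a height count. The upper bound $r_{k+1} \leq r_k + r_1$ comes directly from the convolution; for the matching lower bound I inspect the preimage $(p^k)^{-1}(G_1) \subset H[p^{k+1}]$ -- a flat extension of $G_1$ by $H[p^k]$ -- and extract its first HN step, which is a semi-stable subgroup of $H[p^{k+1}]$ of slope $\mu$ hence sits inside $G_{k+1}$, and has height $r_k + r_1$ by combining HN maximality with convolution applied to this extension. The main technical point is precisely this delicate interplay between height bookkeeping in the $G_k$'s and flatness of the scheme-theoretic kernels, which must be carried out over an arbitrary, not necessarily discretely valued, complete valued field; the faithful flatness of the surjection $G_{k+1} \twoheadrightarrow I$ and the sharp tracking of first HN slope lengths across the basic exact sequences relating $H[p^k]$, $H[p^{k+1}]$ and $H[p]$ are the key ingredients.
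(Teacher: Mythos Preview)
Your proof contains a genuine error: the identity $r_{k+1}=r_k+r_1$ is false in general. Indeed, the very next lemma in the paper (Lemma~\ref{kourutopresakitekbouibui87814}) shows that the sequence $a_k:=r_k-r_{k-1}$ is only \emph{decreasing}, and in its case~(1) one has $r_k=r_{k_0}$ for all $k\geq k_0$, so that $r_{k+1}-r_k=0<r_1$ for large~$k$. Your argument for the lower bound does not work either: if $E=(p^k)^{-1}(G_1)$ and $F$ is its first HN step, you correctly get $\mu(F)=\mu$ and $F\subset G_{k+1}$, but there is no mechanism forcing $\htt(F)\geq r_k+r_1$; convolution only gives the opposite inequality, and ``HN maximality'' gives $G_{k+1}\subset F$ (since $p^kG_{k+1}\subset G_1$ means $G_{k+1}\subset E$), hence $\htt(F)=r_{k+1}$, which is the quantity you are trying to bound.

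The fix is already contained in what you wrote and makes the height count unnecessary. You have shown that $I=p^k G_{k+1}$ is semi-stable of slope~$\mu$, and that $G_{k+1}\twoheadrightarrow I$ is faithfully flat with flat kernel $G_{k+1}[p^k]$. From the exact sequence $0\to G_{k+1}[p^k]\to G_{k+1}\to I\to 0$ and additivity of $\deg$ and $\htt$ you get $\mu(G_{k+1}[p^k])=\mu$; being a subgroup of the semi-stable $G_{k+1}$ it is itself semi-stable. Thus $G_{k+1}[p^k]\subset H[p^k]$ is semi-stable of slope~$\mu$, hence contained in $G_k$ by maximality of the first HN step. Together with the inclusion $G_k\subset G_{k+1}[p^k]$ you already have, this gives $G_{k+1}[p^k]=G_k$. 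This is precisely the paper's argument, carried out directly for arbitrary $i\geq j$ without the preliminary detour through the convolution inequality.
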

\begin{proof}
Soient donc $i\geq j\geq 1$. La multiplication par $p^j$ sur $G_i$ a un noyau plat et 
de plus $G_i[p^j]$ est semi-stable avec
$$
\mu (G_i[p^j]) =\mu (G_i).
$$
\'Etant donné que $G_i$ est le premier cran de la filtration de
Harder-Narasimhan de $H[p^i]$, et que $G_j\subset H[p^i]$,
$$
\mu (G_j)\leq \mu (G_i).
$$
\'Etant donné que $G_j$ est le premier cran de la filtration de
$H[p^j]$, et que $G_i[p^j]\subset H[p^j]$,
$$
\mu (G_i[p^j])\leq \mu (G_j).
$$
De l'égalité et des deux inégalités précédentes on tire
$$
\mu (G_i) =\mu (G_i [p^j]) = \mu (G_j).
$$
De cela on déduit que $G_i[p^j]\subset G_j$ (cf. le lemme 7 de \cite{HNgp}). L'autre inclusion s'en déduit de même. 
\end{proof}

On remarquera qu'en particulier on a, pour $k\geq 1$,
$$
p\, G_{k+1}\subset G_k.
$$
On note maintenant  $$\mu_{\mathrm{max}} (H):=\mu_{\mathrm{max}} (H[p])$$ la plus grande pente de $\HN (H[p])$.
On a donc, pour tout $k\geq 1$, $$\mu_{max} (H)=\mu (G_k)=\mu_{max} (H[p^k])$$ et
$$
\mu_{max} (H) = \sup\{\mu (G)\ |\  G\subset H\} = \sup \{\mu (G)\ |\ 
G\subset H[p] \}.
$$
On a toujours 
$$
\mu_{max} (H)\geq \mu (H)
$$
avec égalité si et seulement si $H$ est semi-stable.
\\

\begin{defi}\label{DGKDGIETR982583edg2}
 On pose
$$
\F_H =\underset{k\geq 1}{\limi} G_k \subset H
$$
comme sous-faisceau fppf de $H$. 
\end{defi}

Il résulte du lemme précédent que, pour
tout $k \geq 1$, $\F_H[p^k]=G_k $ est un groupe plat fini sur $\spec (\O_K)$. On a $\F_H=H$ si et seulement si $H$ est
semi-stable. 

\begin{lemm}\label{kourutopresakitekbouibui87814}
Supposons $H$ non semi-stable.
Deux possibilités se
présentent:
\begin{enumerate}
\item soit $\F_H$ est un groupe plat fini sur $\O_K$,
  c'est à dire il existe $k_0\geq 1$ tel que $\F_H =\F_H[p^{k_0}]$, 
\item soit il existe un entier $k_0\geq 0$, tel  $\F_H/\F_H[p^{k_0}]$ soit un
  sous-groupe $p$-divisible non nul de $H/\F_H[p^{k_0}]$,  
  semi-stable et vérifiant $$\mu (\F_H/\F_H[p^{k_0}] )=\mu_{\mathrm{max}} (H) >\mu (H).$$
\end{enumerate}
\end{lemm}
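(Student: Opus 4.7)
The plan is to analyze the sequence of heights $a_k := \htt(G_k/G_{k-1})$ (with $G_0 := 0$) and show that it stabilizes, then split into the two advertised cases. First I would show that $(a_k)_{k \geq 1}$ is non-increasing. Since $G_{j+1}[p^j] = G_j$ by Lemma \ref{KQIZRozru3485gouroulemmedescente}, multiplication by $p$ on $G_{j+1}$ sends $G_j$ to $G_{j-1}$ and thus induces a morphism of finite flat group schemes $G_{j+1}/G_j \xrightarrow{p} G_j/G_{j-1}$. Its kernel is represented by $\{x \in G_{j+1} \;|\; px \in G_{j-1}\}/G_j$; but $px \in G_{j-1} = G_{j+1}[p^{j-1}]$ forces $p^j x = 0$, i.e. $x \in G_{j+1}[p^j] = G_j$, so the kernel vanishes. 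Consequently $a_{j+1} \leq a_j$, and being a non-increasing sequence of non-negative integers, $(a_k)$ stabilizes at some value $a_\infty \geq 0$ from some index $k_0 + 1$ on.

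If $a_\infty = 0$, then $G_k = G_{k_0}$ for every $k \geq k_0$, hence $\F_H = G_{k_0}$ is a finite flat group scheme over $\O_K$ and we are in case (1).

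Otherwise $a_\infty \geq 1$, and I claim that $X := \F_H/\F_H[p^{k_0}] = \F_H/G_{k_0}$ realizes case (2). By construction $X[p^k] = G_{k_0+k}/G_{k_0}$ is finite flat, so it only remains to verify that multiplication by $p$ on $X[p^{k+1}]$ is an epimorphism onto $X[p^k]$. The map $p : G_{k_0+k+1}/G_{k_0} \to G_{k_0+k}/G_{k_0}$ has kernel $G_{k_0+1}/G_{k_0}$ of height $a_{k_0+1} = a_\infty$, so its image has height $\htt(G_{k_0+k+1}/G_{k_0}) - a_\infty = \htt(G_{k_0+k}/G_{k_0})$, the last equality because $a_{k_0+k+1} = a_\infty$. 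An injection of finite flat groups of matching heights is an isomorphism, proving $p$-divisibility; non-nullity is immediate since $a_\infty > 0$.

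For the slopes, the semi-stability of each $G_{k_0+k}$ at slope $\mu_{\mathrm{max}}(H)$ together with the additivity of degree and height in $0 \to G_{k_0} \to G_{k_0+k} \to X[p^k] \to 0$ gives $\mu(X[p^k]) = \mu_{\mathrm{max}}(H)$, hence $\mu(X) = \mu_{\mathrm{max}}(H)$. For semi-stability of $X$, let $M \subset X$ be finite flat, choose $k$ with $M \subset X[p^k]$, and let $N \subset G_{k_0+k}$ be the preimage of $M$; the semi-stability of $G_{k_0+k}$ yields $\mu(N) \leq \mu_{\mathrm{max}}(H)$, and combining with $\mu(G_{k_0}) = \mu_{\mathrm{max}}(H)$ via the additivity of degree and height in $0 \to G_{k_0} \to N \to M \to 0$ gives $\mu(M) \leq \mu_{\mathrm{max}}(H) = \mu(X)$. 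Finally $\mu_{\mathrm{max}}(H) > \mu(H)$ because $H$ is not semi-stable. The main obstacle is the $p$-divisibility statement, which hinges on the eventual constancy of $(a_k)$ and a matching-heights argument; the remaining points are formal consequences of the semi-stability of the $G_k$ already recorded in Lemma \ref{KQIZRozru3485gouroulemmedescente}.
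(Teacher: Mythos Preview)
Your proof is correct and follows essentially the same route as the paper's: both study the sequence $a_k=\htt(G_k/G_{k-1})$, establish that it is non-increasing via the monomorphism $\times p:G_{j+1}/G_j\hookrightarrow G_j/G_{j-1}$, and then split into the two cases according to whether the stable value is zero. Your verification of $p$-divisibility by a direct height count on $p:X[p^{k+1}]\to X[p^k]$ is a minor rephrasing of the paper's observation that the maps $\times p^{i-j}:G_i/G_{i-1}\to G_j/G_{j-1}$ become isomorphisms once the heights agree; you also spell out the slope and semi-stability of $X$, which the paper leaves implicit.
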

\begin{proof}
Posons, pour un entier $k\geq 1$,
$$
a_k = \mathrm{ht} \left ( G_{k}/G_{k-1}\right ).
$$
Si $i\geq j\geq 1$, il y a un monomorphisme
$$
 \times p^{i-j}:G_{i}/G_{i-1} \hookrightarrow G_{j}/G_{j-1}.
$$
La suite $(a_k)_k$ est donc  décroissante. Il existe donc $k_0\geq
1$ tel que pour $k\geq k_0$ on ait $a_k=a_{k_0}$. 
\begin{itemize}
\item Si $a_{k_0}=0$ alors,
 pour $k\geq k_0$, $G_{k}= G_{k_0}$. 
\item 
Si $a_{k_0}>0$, pour $i> j\geq k_0$,
$$
p^{i-j} : G_{i}/G_{i-1} \iso G_{j}/G_{j-1}
$$
est un isomorphisme. 
\end{itemize}
Posons, pour $i\geq 0$, $K_i = G_{k_0+i}/G_{k_0} \subset
\F_H/G_{k_0}$. La suite $(K_i)_{i\geq 1}$ est une suite croissante de
sous-groupes finis et plats de $\F_H/G_{k_0}$, semi-stables de pente
$\mu_{\mathrm{max}} (H)$,  telle que 
\begin{eqnarray*}
\F_H/G_{k_0} &=& \underset{i\geq 1}{\limi} K_i \\
K_i &\subset & (\F_H/G_{k_0})[p^i] \\
\forall i\geq j\geq 1,\;\;\; p^{i-j} K_i &\subset & K _j \\
p^{i-j}: K_i/K_{i-1} &\iso &  K_j/K_{j-1}.
\end{eqnarray*} 
On en déduit que $\F_H/G_{k_0}$ est un groupe $p$-divisible tel que, pour
$i\geq 1$, $(\F_H/G_{k_0})[p^i] = K_i$. 
\end{proof}

\subsection{Deuxième étape}
\label{sec:deuxieme etape}
 
Soit $H$ un groupe $p$-divisible non nul  sur $\O_K$. On
définit, par récurrence, une suite de groupes $p$-divisibles
$(H_i)_{i\geq 1}$ munie de morphismes 
$$
\ph_i : H_i \ldrt H_{i+1}.
$$
On pose pour cela $H_1=H$ et, si $H_i\neq 0$, $$H_{i+1} = H_i/\F_{H_i},$$ avec pour
morphisme $\ph_i$  la projection. Si $H_i=0$ on pose $H_{i+1}=0$. 
 On a donc, pour $i\geq 1$:
\begin{itemize}
\item soit $\ph_i$ est une isogénie de noyau un groupe semi-stable
  vérifiant
\begin{eqnarray*}
\mu (\ker \ph_i) &=& \mu_{\mathrm{max}} (H_i) \\
\mu_{\mathrm{max}} (H_{i+1}) &< &\mu_{\mathrm{max}} (H_i),
\end{eqnarray*}
\item soit $\ph_i$ est la composée d'une isogénie $\psi: H_i\drt
  H'_i$, telle que $\ker \psi$ soit semi-stable de pente $\mu (\ker
  \psi)=\mu_{\mathrm{max}} (H)$, avec un morphisme $H'_i \drt H_{i+1}$ qui
  fait de $H_{i+1}$ un quotient de $H'_i$ dans la catégorie exacte
  $\BT_{\O_K}$ et tel que 
$$
\mu (H_{i+1})<\mu (H_i).
$$
\end{itemize}

\subsection{Lorsque l'algorithme s'arrête en temps fini}

Lorsque la valuation de $K$ est discrète, l'algorithme de descente précédent s'arrête en temps fini.

\begin{theo}\label{SDKGDSHGIERTPO3268SRsdgsee}
Soit $H$ un groupe $p$-divisible sur $\O_K$. 
\begin{enumerate}
\item 
Supposons que dans la suite de groupes $p$-divisibles construite
précédemment, $(H_i)_{i\geq 1}$, on ait $H_i=0$ pour $i\gg 0$. Alors, $H$
est isogène à un groupe de type $HN$.
\item Lorsque la valuation de $K$ est discrète, l'algorithme s'arrête en temps fini et tout groupe $p$-divisible sur $\O_K$ est isogène à un groupe de type HN.
\end{enumerate}
\end{theo}
\begin{proof}
Seul le point (2) demande vérification. C'est une conséquence de ce que, avec les notations de 
la section \ref{sec:deuxieme etape}, on a $\mu_{\mathrm{max}} (H_{i+1})<\mu_{\mathrm{max}} (H_i)$ qui varie dans un ensemble fini puisque $\mu_{\mathrm{max}} (H)=\mu_{\mathrm{max}} (H[p])$ pour $H\in \BT_{\O_K}$.
\end{proof}

\section{Polygone de Harder-Narasimhan non renormalisé}
\label{DFDKDGisrZrogospehre4329frsf}

Soit $H$ un groupe 
$p$-divisible non nul sur $\O_K$. Si $n_0\geq 1$ est un entier, la suite de polygones 
$$\big ( \mathrm{HN}(H[p^n])_{| \, [0,n_0 \mathrm{ht}\, H]}\big )_{n\geq n_0}$$ est croissante. On peut donc
poser la définition suivante.

\begin{defi}\label{defi:poly no renormalise}
On appelle polygone de Harder-Narasimhan non renormalisé de $H$ la fonction concave
\begin{eqnarray*}
\mathrm{HN}(H)^{nr}: [0,+\infty [&\ldrt & [0,+\infty ] \\
x & \longmapsto & \underset{n\drt +\infty \atop n \mathrm{ht}\, H\geq x}{\lim} \mathrm{HN}(H[p^n])(x).
\end{eqnarray*}
\end{defi}

Analysons cette fonction en termes de l'algorithme de la section \ref{KDSalgoGisdgETP348hhdidncbdsyg}. 
Soit $$
H=H_1\twoheadrightarrow\cdots\twoheadrightarrow H_i\twoheadrightarrow H_{i+1}\twoheadrightarrow \cdots
$$
 la suite de 
groupes $p$-divisibles construite dans la section \ref{KDSalgoGisdgETP348hhdidncbdsyg}, où si $H_i\neq 0$ alors
$H_{i+1}=H_i/\F_{H_i}$ (cf. déf. \ref{DGKDGIETR982583edg2}). Soit $i_0\in [1,+\infty]$ tel que 
\begin{itemize}
\item $i_0=+\infty$ si $\forall i\geq 1$, $\F_{H_i}$ est un schéma en groupes fini et plat, i.e. pour tout $i\geq 1$ le morphisme $H_i\twoheadrightarrow H_{i+1}$ est une isogénie,
\item sinon, $i_0=\inf \{ i\geq 1\;|\; \F_{H_i}\text{ n'est pas un schéma en groupes fini et plat}\}$.
\end{itemize}

Prenons la définition suivante:
\begin{enumerate}
\item 
Si $i_0=+\infty$, considérons la suite strictement décroissante $(\mu_i)_{i\geq 1} = (\mu_{\mathrm{max}} (H_i))_{i\geq 1}$. On pose alors
$$
\mu_\infty (H) = \underset{i\drt +\infty}{\lim} \mu_i.
$$
\item 
Si $i_0\neq +\infty$, considérons la suite décroissante $(\mu_i)_{1\leq i\leq i_0} = (\mu_{max} (H_i))_{1\leq i\leq i_0}$. On pose alors 
$$
\mu_\infty (H)=\mu_{i_0}.
$$
\end{enumerate}

On dispose alors du lemme suivant qui permet de relier le polygone non-renormalisé de $H$ à l'algorithme de descente.

\begin{lemm}
Soit $\mathcal{P}:[0,+\infty[\ldrt [0,+\infty[$ le polygone concave défini par 
\begin{enumerate}
\item si $i_0=+\infty$, les pentes de $\mathcal{P}$ sont les $(\mu_i)_{i\geq 1}$ avec multiplicités $\big ( \mathrm{ht}\, \ker (H_i\twoheadrightarrow H_{i+1})\big )_{i\geq 1}$
\item si $i_0\neq +\infty$, les pentes de $\mathcal{P}$ sont les $(\mu_i)_{1\leq i\leq i_0}$ avec multiplicités 
$$\big ( \mathrm{ht}\, \ker (H_1\twoheadrightarrow H_2 ),\dots, \mathrm{ht}\, \ker (H_{i_0-1}\twoheadrightarrow H_{i_0}),\infty \big )$$.
\end{enumerate}
Alors, pour tout entier $n\geq 1$,
$$
\mathrm{HN} (H[p^n])_{|[0,n]} = \mathcal{P}_{|[0,n]}.
$$
\end{lemm}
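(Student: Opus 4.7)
The plan is to argue by induction on the index $i$ of the algorithm, showing that the HN filtration of $H[p^n]$ accumulates its slopes in the order $\mu_1 > \mu_2 > \cdots$ dictated by $\mathcal{P}$, combining the descent of Section \ref{KDSalgoGisdgETP348hhdidncbdsyg} with Proposition \ref{DKGSISG92447EGsokotriplosmp}. For $0 \leq i \leq i_0 - 1$, I introduce the composition $\Phi_i \colon H = H_1 \twoheadrightarrow H_{i+1}$ of the quotient maps $\varphi_1, \ldots, \varphi_i$: this is an isogeny, and its kernel $K_i \subset H$ is a finite flat subgroup equipped with a filtration $K_1 \subset \cdots \subset K_i$ whose successive quotients $K_j/K_{j-1} \cong \F_{H_j}$ are semi-stable of slope $\mu_j$ (so in particular, this is the HN filtration of $K_i$, by strict decrease of the slopes). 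The goal is to identify $K_i[p^n]$ with the initial segment of the HN filtration of $H[p^n]$.

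The base case is Lemma \ref{KQIZRozru3485gouroulemmedescente}: $K_1[p^n] = \F_{H_1}[p^n] = G_n$ is the first HN piece of $H[p^n]$, of slope $\mu_1$. For the inductive step, I apply the snake lemma to $0 \to K_i \to H \xrightarrow{\Phi_i} H_{i+1} \to 0$ (using $p$-divisibility of $H$ and $H_{i+1}$) to obtain
\begin{equation*}
0 \to K_i[p^n] \to H[p^n] \to H_{i+1}[p^n] \to K_i / p^n K_i \to 0,
\end{equation*}
exhibiting $H[p^n]/K_i[p^n]$ as a subgroup of $H_{i+1}[p^n]$ with an explicit cokernel. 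Combining this with the analogous snake-lemma description for $H_{i+1}$ (the tail of our algorithm), and applying Proposition \ref{DKGSISG92447EGsokotriplosmp} to the relevant short exact sequences, one can assemble the polygon $\mathrm{HN}(H[p^n])$ inductively: the strict inequalities $\mu_i > \mu_{i+1}$ force the polygon pieces to concatenate correctly, and a direct computation of heights (in particular $\mathrm{ht}(\F_{H_j}[p^n]) \leq m_j$, with equality for $n$ large enough) shows that the resulting polygon matches $\mathcal{P}$ on the relevant initial range.

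For the terminal case $i = i_0 < +\infty$, Lemma \ref{kourutopresakitekbouibui87814}(2) produces a non-trivial $p$-divisible semi-stable subgroup $X \subset H_{i_0}$ of slope $\mu_{i_0}$; pulling $X$ back along $\Phi_{i_0-1}$ yields a $p$-divisible subgroup of $H$ whose $p^n$-torsion contributes a segment of slope $\mu_{i_0}$ whose length grows unboundedly with $n$, matching the $\infty$-multiplicity tail of $\mathcal{P}$. The main obstacle I anticipate is the delicate bookkeeping of the heights $\mathrm{ht}(\F_{H_j}[p^n])$ and of the snake-lemma cokernels $K_i/p^n K_i$: one must verify that the polygon $\mathrm{HN}(H[p^n])$ has genuinely caught up with $\mathcal{P}$ on all of $[0,n]$ and not merely on a shorter range. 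This particular truncation reflects the fact that for $x \leq n$ the $p^n$-torsion is large enough that the corresponding portion of the polygon has stabilized at $\mathcal{P}(x)$, whereas for $x > n$ this need not hold; rigidifying this stabilization using Proposition \ref{DKGSISG92447EGsokotriplosmp} is the technical heart of the argument.
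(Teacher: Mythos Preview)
The paper states this lemma without proof, so there is nothing to compare against; your outline is a reasonable attempt to fill in what the author leaves implicit. The inductive skeleton (pass to $H_{i+1}$ via the snake lemma, use that $\mu_1>\mu_2>\cdots$) is the right one. There are, however, two genuine gaps in what you wrote.

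First, the height estimate you record is the wrong way around. You note that $\mathrm{ht}(\F_{H_j}[p^n])\leq m_j$ with equality for $n$ large; this is trivial and does not control the range $[0,n]$. What makes the truncation to $[0,n]$ work is the \emph{lower} bound
\[
\mathrm{ht}\big(\F_H[p^n]\big)\;\geq\;\min(n,m_1),
\]
which follows because the integers $a_k=\mathrm{ht}(G_k/G_{k-1})$ are decreasing (Lemma~\ref{KQIZRozru3485gouroulemmedescente}): either $a_n\geq 1$, forcing $\mathrm{ht}(G_n)\geq n$, or $a_n=0$, forcing $G_n=\F_H$ of height $m_1$. In particular, when $n>m_1$ one has $G_n=\F_H$ \emph{exactly}, and its $p$-exponent $c$ satisfies $c\leq m_1$. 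Without this, you cannot argue that the first piece of $\mathrm{HN}(H[p^n])$ already covers $[0,\min(n,m_1)]$.

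Second, Proposition~\ref{DKGSISG92447EGsokotriplosmp} only gives upper bounds, so it cannot by itself produce the equality on $[0,n]$. For the lower bound you need, beyond the snake-lemma inclusion $H[p^n]/\F_H\hookrightarrow H_2[p^n]$, the reverse inclusion $H_2[p^{\,n-c}]\subset H[p^n]/\F_H$ (the connecting map $\delta$ vanishes on $H_2[p^{\,n-c}]$ since $p^c\F_H=0$). Sandwiching and applying the lemma inductively to $H_2$ at levels $n$ and $n-c$ then pins down $\mathrm{HN}(H[p^n]/\F_H)$ on $[0,n-c]\supset[0,n-m_1]$, which is exactly what is needed. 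Your text gestures at this (``snake-lemma cokernels $K_i/p^nK_i$'') but never produces the lower sandwich. Relatedly, the stronger statement you aim for --- that $K_i[p^n]$ is literally the $i$-th step of the HN filtration of $H[p^n]$ --- is more than the lemma asserts and is not obviously true for small $n$; it is cleaner to argue directly at the level of polygons on $[0,n]$ as above.
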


On en déduit aisément le résultat qui suit.

\begin{coro}\label{QDGFKIQDFDidgvoezrf249sfsf}
Les propriétés suivantes sont vérifiées:
\begin{enumerate}
\item 
Le polygone $\mathcal{P}$ défini dans le lemme précédent coïncide avec $\mathrm{HN}(H)^{nr}$.
\item 
Sur le segment $[0,1]$ la fonction de Harder-Narasimhan renormalisée $\HN (H)$ est une droite de pente $\mu_\infty (H)=\underset{x\drt +\infty}{\lim} \partial_t \mathrm{HN} {( H)^{nr}}_{|t=x}$.
\end{enumerate}
\end{coro}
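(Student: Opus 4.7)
The plan is to derive the corollary as a direct consequence of the preceding lemma, which is what does the real work. Part (1) is essentially a translation of the lemma to the limit, and part (2) is a rescaling argument that reduces to the asymptotic slope of $\mathcal{P}$.

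For (1), I would note that for any fixed $x \in [0,+\infty[$, the condition $n \, \mathrm{ht}\, H \geq x$ is eventually satisfied, and for $n$ large enough one also has $x \leq n$, so the lemma gives $\mathrm{HN}(H[p^n])(x) = \mathcal{P}(x)$. Passing to the limit in $n$, by the very definition of $\mathrm{HN}(H)^{nr}$, one concludes $\mathrm{HN}(H)^{nr}(x) = \mathcal{P}(x)$ for every $x$.

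For (2), the idea is that on $[0,1]$ the renormalization $\frac{1}{n}\mathrm{HN}(H[p^n])(n\bullet)$ probes $\mathcal{P}$ at ever larger arguments, and concave polygons become linear with their asymptotic slope under this operation. More precisely, if $x \in [0,1]$ then $nx \in [0,n]$, so the lemma yields $\mathrm{HN}(H[p^n])(nx) = \mathcal{P}(nx)$; hence, applying Theorem~\ref{SFKEGI39459sguzSF20refq},
$$
\mathrm{HN}(H)(x) \;=\; \lim_{n\to+\infty} \frac{1}{n}\,\mathrm{HN}(H[p^n])(nx) \;=\; \lim_{n\to+\infty} \frac{\mathcal{P}(nx)}{n}.
$$
For $x > 0$, rewrite $\frac{\mathcal{P}(nx)}{n} = x \cdot \frac{\mathcal{P}(nx)}{nx}$, so it suffices to show that $\mathcal{P}(y)/y \to \mu_\infty(H)$ as $y \to +\infty$.

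The asymptotic slope claim is a direct inspection of the two cases in the lemma. If $i_0 < +\infty$, then for $y$ beyond the finite total width $\sum_{i<i_0}\mathrm{ht}\,\ker(H_i\twoheadrightarrow H_{i+1})$ the polygon $\mathcal{P}$ is affine of slope $\mu_{i_0} = \mu_\infty(H)$, so the ratio $\mathcal{P}(y)/y$ tends to $\mu_{i_0}$. If $i_0 = +\infty$, then the slopes $\mu_i$ strictly decrease to $\mu_\infty(H)$, and since $\mathcal{P}(y)/y$ is a convex combination (weighted by segment widths) of the initial slopes $\mu_1,\dots,\mu_{k(y)}$, with the weight of the last slope tending to $1$ as $y\to\infty$, one obtains $\mathcal{P}(y)/y \to \mu_\infty(H)$. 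Either way, $\mathrm{HN}(H)(x) = \mu_\infty(H)\cdot x$ on $[0,1]$, and the identification of $\mu_\infty(H)$ with $\lim_{x\to+\infty}\partial_t\mathrm{HN}(H)^{nr}|_{t=x}$ is immediate from part (1) together with the description of $\mathcal{P}$. The main (but mild) subtlety is the case $i_0 = +\infty$ with possibly finite total height, where one should confirm that the asymptotic slope interpretation of $\mu_\infty(H)$ is still valid; this is handled by the observation that $\mathcal{P}$ is concave and defined on $[0,+\infty[$ by convention.
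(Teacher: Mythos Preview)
Your approach is correct and matches the natural deduction the paper has in mind (the paper gives no proof beyond ``on en d\'eduit ais\'ement''). Two small corrections are worth making.

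First, in the case $i_0=+\infty$ your justification that $\mathcal{P}(y)/y\to\mu_\infty(H)$ is not quite right as stated: the weight of the \emph{last} slope need not tend to $1$ (e.g.\ if all multiplicities equal $1$ then at $y=k$ the weight on $\mu_k$ is $1/k\to 0$). The correct version of your convex-combination argument is that for any fixed $K$ the total weight on $\mu_1,\dots,\mu_{K-1}$ is $S_{K-1}/y\to 0$, so the remaining weight on the tail slopes $\mu_K,\mu_{K+1},\dots$ (all $<\mu_\infty+\epsilon$ for $K$ large) tends to $1$; combined with the lower bound $\mathcal{P}(y)/y\geq \mu_\infty$ (all slopes of $\mathcal{P}$ are $\geq\mu_\infty$) this gives the limit. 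Alternatively, just use concavity and $\mathcal{P}(0)=0$ to see that $\mathcal{P}(y)/y$ is decreasing, bounded below by $\mu_\infty$, and for $y>S_K$ one has $\mathcal{P}(y)\leq \mathcal{P}(S_K)+\mu_K(y-S_K)$.

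Second, your closing worry about $i_0=+\infty$ with finite total height is unnecessary: when $i_0=+\infty$ each $H_i\to H_{i+1}$ is an isogeny with $H_i\neq 0$, hence $\F_{H_i}\neq 0$ and $\mathrm{ht}\,\ker(H_i\twoheadrightarrow H_{i+1})\geq 1$, so the total height is automatically infinite.
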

%
%
%

\section{\'Etude dans le cas de valuation discrète}\label{etudedanslecasdevaluationdicr24925f}

Dans toute cette section {\it $K|\Qp$ est supposé de valuation discrète.} On note $k$ le corps résiduel de $K$.

\subsection{Un résultat de structure général}

En cumulant les théorèmes \ref{theo:structure groupes type HN} et \ref{SDKGDSHGIERTPO3268SRsdgsee} on obtient le résultat suivant.

\begin{theo}\label{theo:filtration HN val disc}
Considérons la catégorie exacte $\BT_{\O_K}\otimes \Q$ munie de la fonction pente $\mu= \frac{\dim}{\htt}$. Soit $H$ un groupe $p$-divisible sur $\O_K$.
\begin{enumerate}
\item $H$ possède une unique filtration de Harder-Narasimhan dans $(\BT_{\O_K}\otimes \Q,\mu)$.
\item $H$ est semi-stable dans $(\BT_{\O_K}\otimes \Q,\mu)$ si et seulement si il est isogène à un groupe semi-stable au sens de la définition \ref{defi:groupes semi stables}. 
\item Le polygone de Harder-Narasimhan de $H$ dans $(\BT_{\O_K}\otimes \Q,\mu)$ coïncide avec 
le polygone renormalisé $\HN(H)$.
\end{enumerate}
\end{theo}

Prenons maintenant la définition suivante.

\begin{defi}
On note $\Newt (H_k)^\diamond$ la \og version  concave \fg{} du polygone de Newton de la fibre spéciale de $H\in \BT_{\O_K}$. Plus précisément,
$$
\forall x\in [0,\htt (H)],\ \ \Newt(H_k)^\diamond (x) = \dim (H) - \Newt (H_k) (\htt (H)-x).
$$
\end{defi}

La proposition suivante est une remarque clef dont la démonstration est immédiate. Le point (1) entraîne immédiatement le point (2). Le point (2) est lui-même un {\it analogue du théorème de Grothendieck sur la  semi-continuité du polygone de Newton} dans les familles de groupes $p$-divisibles 
paramétrées par une base de caractéristique $p$ (\cite{KatzSlope}).

\begin{prop}\label{prop:inegalite HN Newton cas valuation discrete} 
Les propriétés suivantes sont vérifiées.
\begin{enumerate}
\item 
La filtration de Harder-Narasimhan de $H_{k}$ dans la catégorie abélienne $\BT_k\otimes \Q$, munie de la fonction pente $\mu= \frac{\dim}{\htt}$, est donnée par la filtration par les pentes de Grothendieck (\cite{Zink1}) de $H_k$.
\item On a l'inégalité
$$
\HN (H)\leq \Newt (H_k)^\diamond.
$$
\end{enumerate} 
\end{prop}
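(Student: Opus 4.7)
\medskip
\noindent\textbf{Plan of proof.} The plan is to establish (1) directly from the Dieudonn\'e--Manin/Grothendieck slope decomposition over the perfect residue field $k$, and then to derive (2) by reducing the Harder--Narasimhan filtration of $H$ modulo $p$ and comparing the resulting polygon with the HN polygon of $H_k$ supplied by (1).

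For (1), I would first appeal to classical Dieudonn\'e theory over the perfect field $k$: the covariant Dieudonn\'e functor realizes $\BT_k \otimes \Q$ as a subcategory of the category of $F$-isocrystals over $k$, in which the Dieudonn\'e--Manin theorem produces a Grothendieck slope filtration of $H_k$ with isoclinic graded pieces of strictly decreasing Newton slopes $\lambda_1 > \cdots > \lambda_r$. The two facts I need are that an isoclinic $p$-divisible group of Newton slope $\lambda$ satisfies $\mu = \dim/\htt = \lambda$, which is built into the definition of the Newton slope, and that any sub-$p$-divisible group of an isoclinic isocrystal of slope $\lambda$ is again isoclinic of slope $\lambda$, which follows from the semi-simplicity of the isoclinic sub-category of isocrystals. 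Together these imply that each isoclinic piece is semi-stable for $\mu$, with $\mu$-value equal to its Newton slope; by the uniqueness of the HN filtration established in Theorem \ref{theo:filtration HN val disc} applied over $k$, the Grothendieck filtration \textit{is} the HN filtration of $H_k$ in $(\BT_k\otimes\Q,\mu)$.

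For (2), I would use Theorem \ref{theo:filtration HN val disc} to endow $H$ with its HN filtration $0 = H_0 \subset H_1 \subset \cdots \subset H_r = H$ in $\BT_{\O_K} \otimes \Q$, whose associated polygon coincides with the renormalized polygon $\HN(H)$. Reducing modulo $p$ yields a filtration of $H_k$ in $\BT_k \otimes \Q$ whose graded pieces $(H_i/H_{i-1})_k$ have the same heights, dimensions, and hence the same slopes $\mu_i$ as their characteristic-zero counterparts. Because the slopes $\mu_1 > \cdots > \mu_r$ are strictly decreasing, the polygon attached to this reduced filtration is automatically concave and equals $\HN(H)$; it need not however be the HN filtration of $H_k$. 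Since in any Harder--Narasimhan category the polygon of an arbitrary filtration lies below the HN polygon of the ambient object, combining this with (1) gives
$$
\HN(H) \leq \HN(H_k) = \Newt(H_k)^\diamond,
$$
which is (2).

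The only mildly delicate step is the semi-simplicity statement for sub-objects of isoclinic isocrystals invoked in (1); once that classical input is granted, everything else is an immediate combination of Theorem \ref{theo:filtration HN val disc} with bookkeeping on heights, dimensions and slopes along the reduction map $\O_K \to k$.
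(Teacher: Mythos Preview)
Your argument is correct and is exactly the mechanism the paper has in mind when it declares the proof ``imm\'ediate'' and notes that (1) entails (2). Two small points of housekeeping are worth flagging. First, you assume $k$ is perfect, but the proposition sits in Section~5.1 where no such hypothesis is in force (perfectness is only imposed from Section~5.2 onward); the paper cites Zink precisely because the slope filtration exists over an arbitrary field. Your argument still goes through: to see that an isoclinic $p$-divisible group over $k$ is semi-stable for $\mu$, base change to the perfection and apply Dieudonn\'e--Manin there, since $\dim$ and $\htt$ are insensitive to this base change. Second, invoking Theorem~\ref{theo:filtration HN val disc} ``over $k$'' for the uniqueness of the HN filtration of $H_k$ is a misattribution, since that theorem is about $\BT_{\O_K}\otimes\Q$; what you actually need is the standard uniqueness of HN filtrations in an abelian category equipped with a slope function, which $\BT_k\otimes\Q$ certainly is. With these cosmetic adjustments your proof is complete.
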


Notons le corollaire important suivant.

\begin{coro}\label{coro:isocline implique semi stable}
Si la fibre spéciale $H_k$ de $H\in \BT_{\O_K}$ est isocline, alors $H$ est isogène à un groupe $p$-divisible semi-stable. 
\end{coro}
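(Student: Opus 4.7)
The plan is to combine the Newton-polygon inequality just proved with the general structure theorem from Theorem \ref{theo:filtration HN val disc} and exploit concavity to force $H$ to be semi-stable in the isogeny category.

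First I would unwind what the isocline hypothesis says about $\Newt(H_k)^\diamond$. If $H_k$ is isocline of slope $\mu = \frac{\dim H}{\htt H}$, the usual (convex) Newton polygon is the straight segment from $(0,0)$ to $(\htt H, \dim H)$ of constant slope $\mu$. Applying the formula $\Newt(H_k)^\diamond(x) = \dim H - \Newt(H_k)(\htt H - x)$ gives $\Newt(H_k)^\diamond(x) = \mu x$, i.e.\ the same straight segment between the same two endpoints.

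Next I would combine this with the properties of $\HN(H)$. By Theorem \ref{SFKEGI39459sguzSF20refq} the renormalised polygon $\HN(H) : [0,\htt H] \to [0,\dim H]$ is concave with $\HN(H)(0) = 0$ and $\HN(H)(\htt H) = \dim H$, so it lies above its chord: $\HN(H)(x) \geq \mu x$. On the other hand, Proposition \ref{prop:inegalite HN Newton cas valuation discrete} yields $\HN(H)(x) \leq \Newt(H_k)^\diamond(x) = \mu x$. The two inequalities force the equality $\HN(H)(x) = \mu x$ for all $x \in [0,\htt H]$, so the Harder--Narasimhan polygon of $H$ is a single segment of slope $\mu$.

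To conclude, I invoke Theorem \ref{theo:filtration HN val disc}: since $K$ is discretely valued, the renormalised polygon coincides with the Harder--Narasimhan polygon of $H$ in the exact category $(\BT_{\O_K} \otimes \Q, \mu)$. A single-slope polygon means $H$ is semi-stable in this isogeny category, which by part (2) of the same theorem is equivalent to $H$ being isogenous to a semi-stable $p$-divisible group in the sense of Definition \ref{defi:groupe semi-stable}. There is no real obstacle here; the only delicate point is a sign/convention check that the concavity direction of $\HN(H)$ together with the inequality against $\Newt(H_k)^\diamond$ really squeezes the polygon onto its chord, which is precisely what the isocline hypothesis guarantees.
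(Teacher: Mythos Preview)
Your proof is correct and is exactly the immediate deduction the paper has in mind: the corollary is stated right after Proposition~\ref{prop:inegalite HN Newton cas valuation discrete} with no written proof, and your argument (isocline forces $\Newt(H_k)^\diamond$ to be the chord, concavity plus the inequality squeeze $\HN(H)$ onto that chord, then Theorem~\ref{theo:filtration HN val disc} translates a single-slope polygon into the isogeny statement) is precisely how one unpacks that implication.
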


\begin{rema}
Si on remplace la fonction pente $\mu=\frac{\dim}{\htt}$ par $-\mu$
dans la catégorie abélienne
$\BT_k\otimes\Q$,
 on obtient
également une filtration de Harder-Narasimhan.  Si $k$ est parfait, il s'agit de la
filtration opposée à la filtration précédente relativement à la
décomposition de Dieudonné-Manin en somme directe d'objets
isoclines. Mais sur un corps $k$ quelconque, la filtration par les
pentes n'est pas scindée et cette autre filtration ne semble pas avoir
d'interprétation intéressante.
\end{rema}

\subsection{Filtration de Harder-Narasimhan des représentations de Hodge-Tate et cristallines}

Dans cette sous-section on suppose de plus que {\it le corps résiduel $k$ de $K$ est parfait}. On va faire le lien entre les filtrations précédentes et certaines filtrations d'objets de théorie de Hodge $p$-adique.

\subsubsection{Filtration de Harder-Narasimhan des espaces vectoriels filtrés}

Rappelons d'abord la construction suivante. Soit $L|E$ une extension de corps. Notons $\VectFil_{L/E}$ la catégorie exacte formée des couples $(V,\Fil^\bullet V_L)$ où
\begin{itemize}
\item $V$ est un $E$-espace vectoriel de dimension finie
\item $\Fil^\bullet V_L$ est une filtration décroissante de $V_L :=
  V\otimes_E L$, telle que  $\Fil^i V_L =V_L$ pour $i\ll 0$ et $\Fil^i
  V_L=0$ pour $i\gg 0$.
\end{itemize}
Pour un tel objet on pose 
\begin{eqnarray*}
\mathrm{rg} ( V,\Fil^\bullet V_L) &:= & \dim V \\
\deg ( V,\Fil^\bullet V_L) &:= &  \sum_{i\in \Z} i. \dim \mathrm{Gr}^i V_L.
\end{eqnarray*}
On dispose alors de filtrations de Harder-Narasimhan dans $\VectFil_{L/E}$ pour la fonction pente
$$
\mu  = \frac{\deg}{\mathrm{rg}}.
$$
On renvoie pour cela à \cite{Andre1} et la section 5.5 de \cite{Courbe}. 

\subsubsection{Filtration des représentations de Hodge-Tate}
\label{sec:filtration de HT}

Notons $G_K=\Gal (\overline{K} |K)$ le groupe de Galois d'une clôture algébrique de $K$ et 
$$
\mathrm{Rep}_{\Qp}^{\mathrm{HT}} (G_K)
$$
la catégorie des représentations de $G_K$ à valeurs dans un $\Qp$-espace vectoriel de dimension fini qui sont de Hodge-Tate. On note $C=\widehat{\overline{K}}$. Pour $V\in \mathrm{Rep}_{\Qp}^{\mathrm{HT}} (G_K)$ on pose
\begin{eqnarray*}
\mathrm{rg} (V) & := & \dim_{\Qp} V \\
\deg (V) & := & d\ \text{ si } \det ( V_C) \simeq C(d).
\end{eqnarray*}
On s'intéresse aux filtrations de Harder-Narasimhan dans la catégorie abélienne $\mathrm{Rep}_{\Qp}^{\mathrm{HT}} (G_K)$ munie de la fonction pente $\mu =\frac{\deg}{\mathrm{rg}}$.

Pour $V\in \mathrm{Rep}_{\Qp}^{\mathrm{HT}} (G_K)$, définissons la filtration $\Fil^\bullet V_C$ par la formule 
$$
\Fil^i V_C = \bigoplus_{j\geq i} V_C(-j)^{G_K} \otimes_K C(j).
$$
Cela définit un foncteur exact entre catégories de Harder-Narasimhan
\begin{equation}\label{eq:foncteur HT}
\mathrm{Rep}^{\mathrm{HT}}_{\Qp} (G_K) \ldrt \VectFil_{C/\Qp}
\end{equation}
via lequel les fonctions pentes se correspondent. 

\begin{prop}
Via le foncteur (\ref{eq:foncteur HT}), la filtration de Harder-Narasimhan d'une représentation de Hodge-Tate $V$ correspond à la filtration de Harder-Narasimhan de l'espace vectoriel filtré dans $\VectFil_{C/\Q}$ associé.
\end{prop}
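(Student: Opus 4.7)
The plan is to exploit the standard characterization of the Harder-Narasimhan filtration in any HN category: it is the unique filtration whose successive quotients are semistable with strictly decreasing slopes. The functor~(\ref{eq:foncteur HT}) is exact (the isotypic decomposition $V_C = \bigoplus_j V_C(-j)^{G_K} \otimes_K C(j)$ is functorial and compatible with short exact sequences of HT representations), and by construction preserves rank and degree, hence slope. Consequently, the image under this functor of the HN filtration of $V \in \mathrm{Rep}^{\mathrm{HT}}_{\Qp}(G_K)$ automatically has strictly decreasing slopes in $\VectFil_{C/\Qp}$. So the whole proposition reduces to one implication: if $V$ is semistable as a Hodge-Tate representation, then $(V, \Fil^{\bullet} V_C)$ is semistable in $\VectFil_{C/\Qp}$.

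To establish this implication I would argue by contradiction. Assume $V$ is HT-semistable but $(V, \Fil^{\bullet} V_C)$ is not; let $W \subset V$ be the unique maximal destabilizing $\Qp$-subspace, endowed with the induced filtration $\Fil^i W_C = W_C \cap \Fil^i V_C$, satisfying $\mu(W) > \mu(V)$. The heart of the argument is to show that $W$ is automatically $G_K$-stable. Once this is known, $W$ is a sub-Hodge-Tate representation of $V$ (sub-representations of HT representations are HT, by the isotypic decomposition of $V_C$), and its slope in $\mathrm{Rep}^{\mathrm{HT}}_{\Qp}(G_K)$ coincides with its slope in $\VectFil_{C/\Qp}$, producing a destabilizer of $V$ in the HT category and contradicting HT-semistability.

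For the $G_K$-stability of $W$, the key observation is that each $\Fil^i V_C$ is $G_K$-stable, being the $C$-span of the Galois-stable pieces $V_C(-j)^{G_K} \otimes_K C(j)$ for $j \geq i$. Hence for every $g \in G_K$, the $\Qp$-subspace $gW \subset V$ satisfies $(gW)_C = g(W_C)$ and
$$
(gW)_C \cap \Fil^i V_C = g\bigl(W_C \cap \Fil^i V_C\bigr),
$$
and since $g$ acts on $V_C$ by a semilinear automorphism, $C$-dimensions of $C$-subspaces are preserved. It follows that $\mathrm{rg}(gW) = \mathrm{rg}(W)$ and $\deg(gW) = \deg(W)$, so $gW$ is itself a maximal destabilizing sub-object. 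By uniqueness of the maximal destabilizing sub-object in any HN category, $gW = W$ for every $g \in G_K$.

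The main obstacle, in my view, is precisely the interplay between the semilinear Galois action on $V_C$ and the $C$-linear structure used in the definition of $\deg$: one must check carefully that a semilinear automorphism of $V_C$ preserves $C$-dimensions of $C$-subspaces, and that $(gW)_C = g(W_C)$ as a $C$-subspace (not merely a $\Qp$-subspace) of $V_C$. Once these compatibilities are verified, the rest of the proof is a formal application of the uniqueness properties in Harder-Narasimhan categories, combined with the fact that sub-representations of HT representations are again of HT type.
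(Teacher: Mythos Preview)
Your proof is correct and rests on the same key observation as the paper: the slope of a sub-object in $\VectFil_{C/\Qp}$ is $G_K$-invariant (because each $\Fil^i V_C$ is $G_K$-stable and semilinear automorphisms preserve $C$-dimensions), hence by uniqueness the canonical destabilizing sub-object is $G_K$-stable. The paper states this more tersely, applying Galois-invariance directly to the full HN filtration in $\VectFil_{C/\Qp}$ rather than first reducing to the semistable case and then arguing with the maximal destabilizer, but the substance is the same.
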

\begin{proof}
C'est une conséquence de ce que la fonction pente d'un objet de $VectFil_{C/\Q}$ est invariante par $\Gal (\overline{K}|K)$. Il en résulte que la filtration de Harder-Narasimhan de l'objet de $VectFil_{C/\Q}$ associé à une représentation de Hodge-Tate est automatiquement Galois invariante.
\end{proof}

Ce qui nous intéresse pour plus tard est le corollaire suivant.

\begin{coro}\label{coro:filtration determinee par HT}
La filtration de Harder-Narasimhande de $H\in \BT_{\O_K}$ dans $\BT_{\O_K}\otimes \Q$ (théo. \ref{theo:filtration HN val disc}) est complètement déterminée par son application de périodes de Hodge-Tate
$$
\a_{H}: T_p(H) \ldrt \omega_{H^D}\otimes_{\O_K} \O_C
$$
via les filtration de Harder-Narasimhan des objets de $\VectFil_{C/\Qp}$.
\end{coro}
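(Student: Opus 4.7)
The plan is to identify the Harder--Narasimhan filtration of $H$ in $\BT_{\O_K}\otimes\Q$ with that of $V_p(H) := T_p(H)\otimes_{\Zp}\Qp$ in $\mathrm{Rep}^{\mathrm{HT}}_{\Qp}(G_K)$, which by the preceding proposition is in turn identified with the Harder--Narasimhan filtration of $(V_p(H),\Fil^\bullet V_p(H)_C)$ in $\VectFil_{C/\Qp}$; the latter is visibly determined by $\alpha_H$. Concretely, $V_p(H)$ is Hodge--Tate with weights $0$ and $1$ of multiplicities $\htt(H)-\dim(H)$ and $\dim(H)$, so $\mathrm{rg}(V_p(H)) = \htt(H)$, $\deg(V_p(H))=\dim(H)$, and $\mu_{\mathrm{HT}}(V_p(H))=\mu(H)$. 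The Hodge filtration on $V_p(H)\otimes_{\Qp} C$ reads
\[
\Fil^0 = V_p(H)\otimes C, \qquad \Fil^1 = \ker\!\bigl(V_p(H)\otimes C \xrightarrow{\,\alpha_H\otimes 1\,} \omega_{H^D}\otimes_{\O_K} C\bigr), \qquad \Fil^2 = 0,
\]
so $(V_p(H),\Fil^\bullet)\in\VectFil_{C/\Qp}$ depends only on $\alpha_H$.

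The right-hand correspondence is exactly the preceding proposition. For the left-hand one, I would apply $V_p$ to the Harder--Narasimhan filtration of $H$ in $\BT_{\O_K}\otimes\Q$ provided by Theorem~\ref{theo:filtration HN val disc}. Full faithfulness of $V_p$ (Tate's theorem, valid since $K$ is discretely valued with perfect residue field) together with the preservation of rank and degree noted above yields a filtration of $V_p(H)$ in $\mathrm{Rep}^{\mathrm{HT}}_{\Qp}(G_K)$ with strictly decreasing slopes. By uniqueness of Harder--Narasimhan filtrations, this matches the Harder--Narasimhan filtration of $V_p(H)$ provided each graded piece is semi-stable in $\mathrm{Rep}^{\mathrm{HT}}_{\Qp}(G_K)$.

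This last check is the only nontrivial step and the main obstacle: for $X \in \BT_{\O_K}\otimes\Q$ semi-stable, I must show that every Hodge--Tate subrepresentation $W\subset V_p(X)$ is of the form $V_p(Y)$ for a sub-$p$-divisible group $Y\subset X$ in $\BT_{\O_K}\otimes\Q$, which will force $\mu(W) \leq \mu(V_p(X))$ by semi-stability of $X$. The input is the classical $p$-adic Hodge-theoretic equivalence (Tate, Raynaud, Fontaine, Breuil--Kisin) between $\BT_{\O_K}\otimes\Q$ and the category of crystalline $G_K$-representations with Hodge--Tate weights in $\{0,1\}$: any $G_K$-stable subspace of the crystalline $V_p(X)$ is automatically crystalline, inherits weights in $\{0,1\}$, and therefore arises from a sub-$p$-divisible group. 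The corollary follows by tracing through the chain of identifications.
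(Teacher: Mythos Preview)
Your argument is correct and is essentially the implicit proof the paper has in mind: the corollary is stated without proof, immediately after the proposition identifying the HN filtration of a Hodge--Tate representation with that of the associated object in $\VectFil_{C/\Qp}$, and your chain $\BT_{\O_K}\otimes\Q \to \mathrm{Rep}^{\mathrm{HT}}_{\Qp}(G_K) \to \VectFil_{C/\Qp}$ with matching slope functions is the intended bridge. You correctly isolate the one nontrivial point---that the essential image of $V_p$ (crystalline representations with Hodge--Tate weights in $\{0,1\}$) is closed under subobjects in $\mathrm{Rep}^{\mathrm{HT}}_{\Qp}(G_K)$---and invoke the Fontaine/Breuil--Kisin equivalence to settle it; this is indeed the input needed and the paper takes it for granted.
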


\subsubsection{La bifiltration des $\ph$-modules filtrés}

Nous n'utiliserons par les résultats de cette sous-section dans la suite. 
Notons $K_0=W(k)_\Q$ et 
$$
\ph\mathrm{-ModFil}_{K/K_0}
$$
la catégorie des $\ph$-module filtrés associée de Fontaine. Les $\ph$-modules filtrés admissibles, qui correspondent aux représentations cristallines, sont les objets semi-stables pour la fonction degré donnée par $t_H-t_N$ où $t_H$, resp. $t_N$, est le point terminal du polygone de Hodge, resp. Newton (cf. par exemple \cite{Courbe} sec. 10.5.1). On va voir que l'on peut coupler les filtrations des sections précédentes avec ces filtrations et définir des bifiltrations des $\ph$-modules filtrés. Cela est essentiellement dû au fait que l'on ne dispose pas seulement de deux fonctions additives sur $\ph\mathrm{-ModFil}_{K/K_0}$, mais de trois.

Munissons $\Z^2$ de l'ordre lexicographique et posons
$$
\deg= (t_H-t_N, -t_N): \ph\mathrm{-ModFil}_{K/K_0} \ldrt \Z^2.
$$
La fonctions rang d'un $\ph$-module filtré est définie comme étant la hauteur de l'isocristal associé. On vérifie alors aisément que l'on dispose de filtrations de Harder-Narasimhan pour la fonction pente $\frac{\deg}{\mathrm{rg}}$ à valeurs dans $\Q^2$ muni de l'ordre lexicographique. On note $V_{cris}$ le foncteur covariant de Fontaine défini par
$$
V_{cris} ( D,\ph^,\Fil^\bullet D_K) = \Fil^0( D\otimes_{K_0} B_{cris} )^{\ph=\mathrm{Id}}.
$$
On vérifie alors aisément le résultat suivant.

\begin{prop}
Via le foncteur $V_{cris}$ covariant de Fontaine, la filtration de Harder-Narasimhan d'un $\ph$-module filtré admissible correspond à sa filtration de Harder-Narasimhan comme représentation de Hodge-Tate (sec. \ref{sec:filtration de HT}).
\end{prop}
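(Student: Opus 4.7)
Le plan est de ramener d'abord, par une manipulation �l�mentaire de l'ordre lexicographique, la filtration de Harder-Narasimhan bifiltr�e d'un $\ph$-module filtr� admissible $D$ � une filtration � l'int�rieur de la sous-cat�gorie ab�lienne des $\ph$-modules filtr�s admissibles, pour la seule fonction pente $-t_N/\mathrm{rg}$; puis de transporter cette filtration via $V_{cris}$ dans $\mathrm{Rep}_{\Qp}^{\mathrm{HT}}(G_K)$ en v�rifiant que les sous-objets et les pentes se correspondent.

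Tout d'abord, je partirais du fait que $D$ est $(t_H-t_N)$-semistable de pente nulle, de sorte que tout sous-objet $D' \subset D$ dans $\ph\mathrm{-ModFil}_{K/K_0}$ a premi�re coordonn�e de pente bifiltr�e $\leq 0$, avec �galit� si et seulement si $D'$ est admissible. Pour l'ordre lexicographique la pente maximale d'un sous-objet est alors atteinte uniquement par des sous-objets admissibles; comme la sous-cat�gorie des admissibles est ab�lienne (Fontaine), une r�currence sur le rang donne que tous les sous-quotients de la filtration de Harder-Narasimhan bifiltr�e de $D$ sont admissibles, et que celle-ci co�ncide avec la filtration de Harder-Narasimhan de $D$ pour la fonction pente $-t_N/\mathrm{rg}$ dans la sous-cat�gorie ab�lienne des objets admissibles. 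Ensuite, j'utiliserais l'�quivalence $V_{cris}$ entre $\ph$-modules filtr�s admissibles et repr�sentations cristallines de $G_K$, coupl�e � la stabilit� des repr�sentations cristallines par sous-$G_K$-modules, pour identifier les sous-objets admissibles de $D$ aux sous-$G_K$-modules de $V = V_{cris}(D)$; ces derniers co�ncident eux-m�mes avec les sous-objets dans $\mathrm{Rep}_{\Qp}^{\mathrm{HT}}(G_K)$ puisque toute repr�sentation cristalline est de Hodge-Tate.

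Enfin, je v�rifierais la compatibilit� des pentes: pour $D'$ admissible et $V' = V_{cris}(D')$, les poids de Hodge-Tate de $V'$ sont les oppos�s des sauts de la filtration de Hodge sur $D'_K$ (comparaison cristalline), d'o� $\deg_{\mathrm{HT}}(V') = -t_H(D')$; l'admissibilit� de $D'$ donne alors
$$
\mu_{\mathrm{HT}}(V') \;=\; \frac{-t_H(D')}{\mathrm{rg}(D')} \;=\; \frac{-t_N(D')}{\mathrm{rg}(D')},
$$
ce qui co�ncide pr�cis�ment avec la seconde coordonn�e de la pente bifiltr�e, seule non triviale sur les sous-objets admissibles. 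Les filtrations de Harder-Narasimhan se correspondent donc terme � terme via $V_{cris}$. Le point d�licat me semble �tre le contr�le des conventions de signes dans l'identification $\deg_{\mathrm{HT}}(V_{cris}(D')) = -t_H(D')$, qu'on v�rifie aussit�t sur l'objet test $\Qp(1)$ et qu'on �tend par fonctorialit� tensorielle.
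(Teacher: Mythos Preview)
Your argument is correct and is precisely the verification the paper has in mind. Note that the paper gives no proof at all here: it simply writes \og On v\'erifie alors ais\'ement le r\'esultat suivant\fg{} before stating the proposition, so there is nothing to compare against beyond the setup itself. Your sketch unpacks exactly the mechanism built into the paper's definition of the lexicographic degree $(t_H-t_N,-t_N)$: the first coordinate forces the Harder--Narasimhan filtration of an admissible $D$ to stay inside the abelian subcategory of admissible subobjects (where the first coordinate vanishes), and on that subcategory only the second coordinate $-t_N/\mathrm{rg}$ survives; the equivalence $V_{cris}$ with crystalline representations then transports the filtration, with the slope identification $\mu_{\mathrm{HT}}(V_{cris}(D'))=-t_H(D')/\mathrm{rg}(D')=-t_N(D')/\mathrm{rg}(D')$ following from admissibility. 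Your sign check on $\Qp(1)$ is the right sanity test, and with the paper's covariant normalisation $V_{cris}(D)=\Fil^0(D\otimes B_{cris})^{\ph=\mathrm{Id}}$ one indeed finds that the Hodge--Tate weights of $V_{cris}(D')$ are the negatives of the jumps of $\Fil^\bullet D'_K$, so $\deg_{\mathrm{HT}}(V')=-t_H(D')$ as you claim.
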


Notons le résultat suivant qui généralise le point (2) de la proposition \ref{prop:inegalite HN Newton cas valuation discrete}, ainsi que le corollaire \ref{coro:isocline implique semi stable}, et qui est le cas où les poids de Hodge-Tate sont dans $\{0,1\}$. La terminologie \og semi-stable\fg{} entre ici en conflit avec la terminologie de Fontaine, on l'entend ici au sens des filtrations de Harder-Narasimhan.

\begin{prop}\label{prop:inegalite HN Newton cas discret}
Pour une représentation cristalline $V$ de $G_K$, d'isocristal associé $(D,\ph)$, on a l'inégalité
$$
\HN (V) \leq \Newt (D,\ph)^{\diamond}.
$$
En particulier, si $(D,\ph)$ est isocline, alors $V$ est semi-stable dans $\mathrm{Rep}_{\Qp}^{\mathrm{HT}} (G_K)$.
\end{prop}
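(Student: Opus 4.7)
The plan is to read off the Harder-Narasimhan filtration of $V$ in $\mathrm{Rep}_{\Qp}^{\mathrm{HT}}(G_K)$ from the filtered $\ph$-module formalism. By the preceding proposition, this filtration corresponds via $V_{cris}$ to a filtration
$$
0 = D_0 \subset D_1 \subset \cdots \subset D_s = D
$$
in the category of filtered $\ph$-modules; equivalently, since crystalline representations are stable under Galois sub-representations, each piece $V_i$ of the HN filtration of $V$ is itself crystalline and we set $D_i := D_{cris}(V_i)$. The decisive point is that each $D_i$ is then automatically an \emph{admissible} filtered $\ph$-module.

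Admissibility forces $t_H(D_i) = t_N(D_i)$ for every $i$. On the other hand, at each breakpoint of the HN polygon, $\HN(V)(\dim V_i)=\deg(V_i)$ by the general theory of HN polygons, and with the paper's conventions $\deg(V_i)=t_H(D_i)$ for a crystalline representation arising from the admissible $D_i$. Combining these,
$$
\HN(V)(\dim V_i) \;=\; t_H(D_i) \;=\; t_N(D_i)
$$
at every breakpoint of $\HN(V)$.

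The remaining ingredient is a classical property of isocrystals: for any sub-isocrystal $D'\subset D$ of rank $r'$,
$$
t_N(D') \;\leq\; \Newt(D,\ph)^\diamond(r'),
$$
the sum of the $r'$ largest Newton slopes of $D$. One checks this after base change to $W(\bar{k})_\Q$, where Dieudonn\'e--Manin gives an isocline decomposition of $D$ which restricts to one of $D'$; the maximum of $t_N$ over rank-$r'$ sub-isocrystals is then achieved by selecting the largest slopes. Applied to each $D_i$, this yields the inequality $\HN(V)(\dim V_i)\leq\Newt(D,\ph)^\diamond(\dim V_i)$ at every breakpoint, and by concavity of $\Newt(D,\ph)^\diamond$ combined with piecewise linearity of $\HN(V)$, the inequality extends to all of $[0,\dim V]$.

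The corollary for isocline isocrystals follows at once: if $(D,\ph)$ has unique Newton slope $\lambda$, then $\Newt(D,\ph)^\diamond$ is the line segment of slope $\lambda$ joining $(0,0)$ to $(\dim V, t_N(D))$. Crystallinity places $\HN(V)$ at the same endpoints, so by concavity $\HN(V)$ lies above that line, while the main inequality places it below; it must therefore coincide with it, so $V$ is semistable in $\mathrm{Rep}_{\Qp}^{\mathrm{HT}}(G_K)$. The main conceptual point to verify is the admissibility of each HN piece, which is either an appeal to the preceding proposition (the HN pieces of an admissible filtered $\ph$-module in the lex-bifiltered category are themselves admissible, since the first coordinate of their lex pente is necessarily $0$) or a direct consequence of the Tannakian stability of the crystalline subcategory under subobjects.
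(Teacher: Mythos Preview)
Your argument is correct and is precisely the intended one. The paper does not spell out a proof here; it simply places the statement after the bifiltration setup and the preceding proposition identifying the Hodge--Tate HN filtration of $V$ with the lexicographic HN filtration of the admissible $(D,\ph,\Fil^\bullet)$, and remarks that this generalises Proposition~\ref{prop:inegalite HN Newton cas valuation discrete}. Your proof unwinds exactly this: the HN pieces $D_i$ are admissible (first lex coordinate $t_H-t_N$ vanishes, equivalently $V_i$ is crystalline), whence $\HN(V)(\mathrm{rk}\,D_i)=t_H(D_i)=t_N(D_i)\leq \Newt(D,\ph)^\diamond(\mathrm{rk}\,D_i)$ by the standard slope bound for sub-isocrystals, and concavity finishes. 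One small remark: your final paragraph lists two justifications for admissibility of the $D_i$; the cleanest is the second (crystalline is stable under subobjects, hence each $V_i$ is crystalline and $D_i=D_{cris}(V_i)$ is admissible), and this is also what makes the preceding proposition work.
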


\section{Filtration des modules de Hodge-Tate}
\label{sec:Filtration des modules de HT}

Dans cette section {\it le corps de base $K$ sera supposé algébriquement clos et on le note $C$.} On se place donc dans une situation \og non-arithmétique\fg{} purement géométrique, à l'opposé du cas de valuation discrète étudié précédemment. On va montrer que l'on peut  retrouver le polygone de Harder-Narasimhan renormalisé d'un groupe $p$-divisible sur $\O_C$, à partir de ses périodes de Hodge-Tate par une formule explicite.

\subsection{Quelques définitions}\label{sec:quelques definitions}

\begin{defi}
\begin{enumerate}
\item 
On appelle module de Hodge-Tate de torsion un triplet $(M,\omega,\a)$ où $M$ est un groupe abélien fini d'ordre une puissance de $p$, $\omega$ un $\O_C$-module de présentation finie de torsion et $\a:M\drt \omega$ vérifie 
$$
\O_C .\a (M)=\omega.
$$
 On note $\M_{\mathrm{tor}}^{\mathrm{HT}}$ la catégorie correspondante.
\item On appelle module de Hodge-Tate entier un triplet $(T,\omega,\a)$ où $T$ est un $\Zp$-module libre de rang fini, $\omega$ un $\O_C$-module libre de rang fini et $\a:T\drt \omega$
vérifie 
$$
\O_C.\a (T)=\omega.
$$
On note $\M_{\Zp}^{\mathrm{HT}}$ la catégorie correspondante.
\item On appelle module de Hodge-Tate rationnel un triplet $(V,\omega,\a)$ où $V$ est un $\Qp$-espace vectoriel de dimension finie, $\omega$ est un $C$-espace vectoriel de dimension finie et $\a:V\drt \omega$ vérifie
$$
C.\a (V)=\omega.
$$
 On note $\M_{\Qp}^{\mathrm{HT}}$ la catégorie correspondante.
\end{enumerate}
\end{defi}

Ce sont des catégories exactes. Par exemple, les suites exactes dans $\M^{\HT}_{\mathrm{tor}}$ sont données par des diagrammes 
$$
\xymatrix@C=8mm@R=8mm{
0\ar[r] & M'\ar[d] \ar[r] & M \ar[d] \ar[r] & M'' \ar[d] \ar[r] & 0 \\
0\ar[r] & \omega'\ar[r] & \omega \ar[r] & \omega''\ar[r] & 0 
}
$$
où les deux lignes horizontales sont exactes. 
L'application $(T,\omega,\a)\mapsto (T\unp,\omega\unp, \a\unp)$ induit une identification 
$$
\M_{\Zp}^{\mathrm{HT}}\otimes \Q = \M_{\Qp}^{\mathrm{HT}}.
$$
En d'autres termes, $\M_{\Qp}^{\HT}$ est la \og catégorie à isogénie près\fg{} associée à $\M_{\Zp}^{\HT}$.
Si $N$ est un $\O_C$-module de présentation finie de torsion, on note 
$$
\deg (N) := v ( \mathrm{Fitt}_0 N),
$$
la valuation d'un générateur du $0$-ième idéal de Fitting associé à $N$.
 Plus concrètement, si $N\simeq \oplus_i \O_C/a_i\O_C$,  $\deg (N)=\sum_i v(a_i)$. 

\begin{defi}\label{defi:module de Hodge Tate}
On définit des fonctions pentes de la façon suivante: 
\begin{enumerate}
\item Pour $X=(M,\omega,\a)\in \M^{\mathrm{HT}}_{\mathrm{tor}}$ on note 
\begin{eqnarray*}
\deg (X) &= & \log_p |M|-\deg (\omega) \\
\htt (X) &=& \log_p |M| \\
\mu (X ) &=& \frac{\deg(X)}{\htt (X)}\in [0,1]. 
\end{eqnarray*}
\item Pour $X=(V,\omega,\a)\in \M^{\mathrm{HT}}_{\Qp}$ on note 
\begin{eqnarray*}
\dim (X) &= & \dim V  -\dim \omega \\
\mathrm{ht} (X) &=& \dim V \\
\mu (X ) &=& \frac{\dim(X)}{\mathrm{ht} (X)}\in [0,1]\cap \Q. 
\end{eqnarray*}
\end{enumerate}
\end{defi}

À $(V,\omega,\a)\in \M^{\mathrm{HT}}_{\Qp}$ est associé l'objet $(V,\Fil^\bullet V_C)\in \VectFil_{C/\Qp}$, où l'on pose $\Fil^{0} V_C= V_C$, $\Fil^1 V_C=\ker \a\otimes 1$ et $\Fil^2 V_C=0$. Via ces formules, $\M^{\mathrm{HT}}_{\Qp}$ se plonge dans $\VectFil_{C/\Qp}$ et les fonctions pentes se correspondent.
\\

On dispose d'un foncteur \og fibre générique\fg{} (\cite{Courbe} sec. 5.1.5)
$$
(M,\omega,\a)\mapsto M
$$
de $\M^{\mathrm{HT}}_{\mathrm{tor}}$ vers la catégorie abélienne des groupes abéliens finis. Dès lors, on dispose de filtrations de Harder-Narasimhan dans cette catégorie.
Par analogie avec le cas des groupes plat finis, pour $(M,\omega,\a)\in \M^{\mathrm{HT}}_{\mathrm{tor}}$ et $M'\subset M$, on appelle {\it adhérence schématique de $M'$} le sous-object strict 
$$
(M', \O_C. \a (M'),\a_{|M'}).
$$
Enfin, pour $X=(T,\omega,\a)\in \M^{\mathrm{HT}}_{\Zp}$ et $n\geq 1$, on note
$$
X[p^n] := (p^{-n}T/ T, p^{-n}\omega/ \omega, \a )\in \M^{\mathrm{HT}}_{\mathrm{tor}}.
$$

\subsection{Adaptation des résultats sur les groupes $p$-divisibles} 

Comme dans le théorème \ref{SFKEGI39459sguzSF20refq}, pour $X\in \M^{\mathrm{HT}}_{\Zp}$, on vérifie que 
$$
\HN (X):= \underset{n\drt +\infty}{\lim} \frac{1}{n} \HN ( X[p^n]) (n\bullet)
$$
définit une fonction concave appelée polygone de Harder-Narasimhan renormalisé de $X$.
On a alors la proposition suivante qui est une adaptation immédiate du théorème \ref{theo:structure groupes type HN} et de la proposition \ref{KdfjudsfEITfhsfroodju23459sedojour}.

\begin{prop} \label{prop:sur essentiel}
Pour $X\in \M^{\mathrm{HT}}_{\Zp}$:
\begin{enumerate}
\item $X[p]$ est semi-stable si et seulement si pour tout $n\geq 1$, $X[p^n]$ l'est. On dit alors que $X$ est semi-stable.
\item Si $X$ est semi-stable il est alors semi-stable à isogénie près, i.e. $X\unp$ est semi-stable dans $\M^{\HT}_{\Qp}$.
\item On a $\HN (X[p]) =\HN (X)$ si et seulement si $X$ possède une filtration croissante $(X_i)_{0\leq i\leq r}$ avec $X_0=0$, $X_r=X$, $X_{i+1}/X_i$ semi-stable et la suite $\big ( \mu (X_{i+1}/X_i)\big )_{0\leq i \leq r-1}$ est strictement décroissante. On dit alors que $X$ est de type HN.
\item Si $X$ est isogène à un module de Hodge-Tate entier de type HN, alors la filtration de Harder-Narasimhan de $X\unp$ dans $\M^{\HT}_{\Qp}$ est donnée par la filtration de Harder-Narasimhan entière précédente et $\HN (X)=\HN ( X\unp)$.
\end{enumerate}
\end{prop}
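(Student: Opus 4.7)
The plan is to mirror the arguments of Theorem \ref{theo:structure groupes type HN} and Proposition \ref{KdfjudsfEITfhsfroodju23459sedojour}, using the observation that the exact category $\M^{\HT}_{\mathrm{tor}}$, equipped with the degree and rank of Definition \ref{defi:module de Hodge Tate}, supports a Harder-Narasimhan formalism formally identical to that of finite flat group schemes of $p$-power order. The first step is to verify the analogue of Proposition \ref{DKGSISG92447EGsokotriplosmp}: for any exact sequence $0 \to X' \to X \to X'' \to 0$ in $\M^{\HT}_{\mathrm{tor}}$ one has
$$\HN(X) \;\leq\; \HN(X') \circledast \HN(X''),$$
with equality when the sequence splits. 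This is proved exactly as in the finite flat group case, using the schematic closure construction $(M', \O_C \cdot \a(M'), \a|_{M'})$ for subgroups $M' \subset M$ recalled in Section \ref{sec:quelques definitions}. Granted this, the proofs of Theorem \ref{SFKEGI39459sguzSF20refq} and Proposition \ref{KGDSGIeqerET93} transport verbatim, so that $\HN(X) = \lim_n \tfrac{1}{n}\HN(X[p^n])(n\cdot)$ is a well-defined concave function on $[0, \htt(X)]$ that is invariant under isogeny in $\M^{\HT}_{\Zp}$.

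For (1), I argue by induction on $n$ via the exact sequences $0 \to X[p] \to X[p^n] \to X[p^{n-1}] \to 0$: semi-stable torsion Hodge-Tate modules of a fixed slope are stable under extension, as a consequence of the convolution inequality together with the characterization of semi-stability in terms of subobjects, exactly as in the proof of Lemma \ref{lestabiltiozgroupsfontianemaroc}. Point (2) adapts the argument of Proposition \ref{DKGSDIqfiqPTidgddt483}: if $X \in \M^{\HT}_{\Zp}$ is semi-stable and $X\unp$ admits in $\M^{\HT}_{\Qp}$ a subobject of slope strictly greater than $\mu(X)$, lifting its $p^n$-torsion to integral submodules $K_n$ contained in some $X[p^N]$ yields $\lim_n \mu(K_n) > \mu(X)$, contradicting the semi-stability of $X[p^N]$.

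For (3), the inequalities valid for every $X \in \M^{\HT}_{\Zp}$,
$$\HN(X[p]) \;\geq\; \tfrac{1}{n}\HN(X[p^n])(n\cdot) \;\geq\; \HN(X),$$
show that $\HN(X[p]) = \HN(X)$ if and only if $\HN(X[p^n])(n\cdot) = n \cdot \HN(X[p])(\cdot)$ for every $n$. In that situation, the analogue of Lemma \ref{KQIZRozru3485gouroulemmedescente} for Hodge-Tate modules implies that the HN filtrations of the $X[p^n]$ are mutually compatible under multiplication by powers of $p$ and therefore glue to a filtration of $X$ by integral Hodge-Tate submodules whose graded pieces are semi-stable with strictly decreasing slopes; the converse is immediate. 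Point (4) follows by combining (3), the isogeny invariance of $\HN$, and point (2) applied to each graded piece of the filtration.

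The only essentially non-formal step is the verification of the convolution inequality for exact sequences in $\M^{\HT}_{\mathrm{tor}}$: one must check that $(M', \O_C \cdot \a(M'), \a|_{M'})$ is a bona fide strict subobject and that the degree function behaves additively up to the expected inequality in short exact sequences. Once this structural point is secured, all four assertions are formal consequences of the translation of the corresponding $p$-divisible group arguments, and no new idea is required.
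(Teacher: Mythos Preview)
Your proposal is correct and follows exactly the approach the paper indicates: the paper gives no detailed proof here, stating only that the proposition is ``une adaptation imm\'ediate du th\'eor\`eme \ref{theo:structure groupes type HN} et de la proposition \ref{KdfjudsfEITfhsfroodju23459sedojour}'', and you have spelled out precisely which arguments (Lemma \ref{lestabiltiozgroupsfontianemaroc}, Propositions \ref{DKGSISG92447EGsokotriplosmp}, \ref{KGDSGIeqerET93}, \ref{DKGSDIqfiqPTidgddt483}, \ref{KdfjudsfEITfhsfroodju23459sedojour}) transport and why. One minor imprecision: in your sketch of (2), the integral lifts $K_n$ are not contained in a single $X[p^N]$ but rather in $X[p^{n+N_0}]$ for a fixed $N_0$ and varying $n$; the contradiction then comes from the semi-stability of \emph{all} the $X[p^m]$ established in (1), not of a single one.
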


On va en fait voir (théo. \ref{theo:essentiel}) que tout $X\in \M^{\HT}_{\Zp}$ est isogène à un module entier de type HN, alors qu'à priori ce résultat n'est vrai que lorsque la valuation du corps de base est discrète pour les groupes $p$-divisibles.

\subsection{Le résultat clef sur les modules de Hodge-Tate entiers}

On va reprendre l'algorithme de descente de la section \ref{KDSalgoGisdgETP348hhdidncbdsyg} et montrer qu'il converge toujours pour les modules de Hodge-Tate entiers.

\begin{theo}\label{theo:essentiel}
Tout $X\in \M^{\HT}_{\Zp}$ est isogène à un module de Hodge-Tate entier de type HN.
\end{theo}
\begin{proof}
Notons $X=(T,\omega,\a)$. Soit, pour tout entier $k\geq 1$,
$$
Y_k =\text{le premier cran de la filtration de Harder-Narasimhan de }X[p^k].
$$
Comme dans le lemme \ref{KQIZRozru3485gouroulemmedescente} on a:
\begin{itemize}
\item la suite $(Y_k)_{k \geq 1}$ forme une suite croissante de modules de Hodge-Tate de torsion semi-stables de même pente,
\item pour $i\geq j\geq 1$, $Y_i[p^j]\subset Y_j$,
\end{itemize}
en utilisant  que la catégorie des modules de Hodge-Tate de torsion semi-stables de pente fixée est abélienne.  Notons 
$$
\mu_{\mathrm{max}} (X)
$$
la pente commune des $(Y_k)_{k\geq 1}$. On note $\La_k$ le réseau associé à $Y_k$, $T\subset \La_k\subset T\otimes \Qp$. Ils forment une suite croissante de réseaux
$$
T\subset \La_1\subset \cdots \subset \La_k \subset \La_{k+1} \subset \cdots
$$
telle que 
$$
\La_k\subset p^{-k} T, \ \ \La_{k+1}\cap p^{-k}\La \subset \La_k\ \text{ et } \ p\La_{k+1}\subset \La_k.
$$
Posons alors 
$$
\La_{\infty} = \bigcup_{k\geq 1} \La_k
$$
qui est donc un sous-$\Zp$-module de $\La\otimes \Qp$ tel que $\La_{\infty}\cap p^{-k} T=\La_k$.
On a $\La_\infty = T\otimes \Qp$ si et seulement si $X$ est semi-stable. On suppose désormais que ce n'est pas le cas. Comme dans le lemme \ref{kourutopresakitekbouibui87814} deux possibilités s'offrent alors:
\begin{enumerate}
\item soit $\La_\infty /T$ est fini, c'est à dire qu'il existe $k_0\geq 1$ tel que pour $k\geq k_0$ on ait $\La_k=\La_{k_0}$,
\item soit $\La_\infty /T$ est infini, auquel cas $X\unp$ possède un un sous-objet non nul strict dans $\M_{\Qp}^{\HT}$ 
$$
A\unp \subsetneq X\unp, 
$$
avec $A\in \M^{\HT}_{\Zp}$ semi-stable et $\mu ( A\unp)=\mu_{\mathrm{max}} (X)>\mu (X\unp)$.
\end{enumerate}
Supposons donc maintenant que $X\unp$ est stable dans $\M^{\HT}_{\Qp}$ mais qu'il n'est pas isogène à un module de Hodge-Tate entier semi-stable. Il existe donc une suite infinie d'isogénies 
$$
X=X_0\xrig{\ f_0\ } X_1\xrig{\ f_1\ }  \dots\xrig{\ f_{i-1}\ }
X_i\xrig{\ f_i\ } X_{i+1} \xrig{\ f_{i+1}\ } \dots
$$
telle que pour tout $i$, $\ker f_i\neq 0$ et 
$$
\mu (\ker f_i)=\mu_{\mathrm{max}} (X_i)
$$
avec 
$$
\mu_{\mathrm{max}} (X_0) >\mu_{\mathrm{max}} (X_1) >\cdots > \mu_{\mathrm{max}} (X_i) > \mu_{\mathrm{max}} (X_{i+1}) >\cdots .
$$
Notons $\mu_i= \mu_{\mathrm{max}} (X_i)$ et 
$$
\mu_\infty = \underset{i\drt +\infty}{\lim} \mu_i.
$$
On a donc $\mu_\infty \geq \mu (X\unp)$. À la suite d'isogénies précédente est associée une suite strictement croissante de réseaux 
$$
T =L_0\subsetneq L_1\subsetneq \dots \subsetneq L_i\subsetneq
L_{i+1}\subsetneq \dots.
$$
Soient
$$
L_\infty =\bigcup_{i\geq 0} L_i
$$
et
$$
(L_\infty)_{div}
$$
sa partie divisible, un sous-$\Qp$-espace vectoriel de $T\otimes
\Qp$. Supposons par l'absurde que $$(L_\infty)_{div}\neq 0.$$
Le $\Zp$-module $(L_\infty)/(L_\infty)_{div}$ est de type
fini (cf. lemme \ref{lemme:stupide stupide}). Or
$$
L_\infty/ (L_\infty)_{div} = \bigcup_{i\geq 0} L_i + (L_\infty)_{div}/(L_\infty)_{div}
$$
et par noethérianité de $L_\infty/(L_\infty)_{div}$ on en déduit qu'il
existe $i_0\geq 0$ tel que pour $i\geq i_0$ on ait
$$
L_i+(L_\infty)_{div} =L_{i_0}+L_\infty.
$$
Quitte à remplacer $X$ par $X_{i_0}$ on
peut supposer que pour $i\geq 0$ l'égalité précédente est vérifiée i.e.
\begin{equation}\label{eq:egalite en infini}
\forall i\geq 0, \ \ L_i+ (L_\infty)_{div} = L_\infty.
\end{equation}
Notons maintenant
\begin{eqnarray*}
T' &=& (L_\infty)_{div}\cap T \\
X' &=& (T', \O_C.\a (T'), \a_{|T'})
\end{eqnarray*} 
et pour $i\geq 0$, 
$$
N_i = \left ((L_\infty)_{div}\cap L_i \right )/ T' \subset  T'\otimes \Qp/\Zp.
$$
On a donc 
$$
T'\otimes \Qp/\Zp  =\bigcup_{i\geq 0} N_i.
$$
Il y a un morphisme naturel
$$
f:X'\ldrt X
$$
qui satisfait aux hypothèses du lemme \ref{lemme:bornes lemme clef}. On en déduit
l'existence d'une constante $C$ telle que pour $i\geq 0$ on ait
$$
\left |\deg \left ( \overline{N_i}^{X'}\right )-\deg\left
    (\overline{N_i}^{X}\right )\right |\leq C.
$$
De plus, comme conséquence de l'égalité (\ref{eq:egalite en infini}), 
$$
\overline{N_i}^{X} = \ker (X_0\drt \dots \drt X_i).
$$
Maintenant, étant donné que $\bigcup_i N_i = T'\otimes \Qp/\Zp$,
pour tout entier positif $r$ il existe $i(r)\geq 0$ tel que 
$$
p^{-r} T'/T'\subset N_{i(r)}.
$$
Cette inclusion implique que 
$$
\deg \left ( \overline{N_{i(r)}}^{X'} \right ) =\deg \left (
  \overline{p^r N_{i(r)}}^{X'} \right ) + r \deg \big ( X'\unp \big ).
$$
On a donc pour $r\geq 1$, par application du lemme \ref{lemme:bornes lemme clef}, 
\begin{equation}\label{eq: deux C}
\left |  \deg\left  ( \overline{p^r N_{i(r)}}^{X} \right ) +r\deg
  \big (X'\unp\big ) - \deg
  \left (\overline{N_{i(r)}}^X \right ) \right | \leq 2 C.
\end{equation}
Maintenant, étant donné que $p^r N_{i(r)}\subset N_{i(r)}$ et que 
le point $$\left (\mathrm{ht} \left  (
  \overline{p^r N_{i(r)}}^{X} \right ), \deg \left  ( \overline{p^r
    N_{i(r)}}^{X} \right ) \right )$$ 
    est en dessous du polygone de
Harder-Narasimhan de $\overline{N_{i(r)}}^X$, on a 
\begin{eqnarray}\label{eq;deux deux trois}
\deg \left  ( \overline{p^r N_{i(r)}}^{X} \right ) &\leq & \deg
\overline{N_{i(r)}}^X -\mu_\infty\, \mathrm{long}_{\Zp} \left ( N_{i(r)}/p^r
  N_{i(r)}\right ) \\
  \nonumber
&=&  \deg
\overline{N_{i(r)}}^X - r \,\mu_\infty \,\mathrm{ht} (X'\unp).
\end{eqnarray}
Combinant les deux dernière inégalités (\ref{eq: deux C}) et (\ref{eq;deux deux trois}) on obtient 
$$
 \deg
  \overline{N_{i(r)}}^X - r\,\deg (X'\unp) -2C \leq \deg 
\overline{N_{i(r)}}^X - r \,\mu_\infty \,\mathrm{ht} (X'\unp).
$$
Cela implique que 
$$
\mu_{\infty} \leq \mu (X'\unp) +\frac{2C}{r \ \htt (X'\unp)}
$$
et en faisant $r \drt +\infty$ on obtient
$$
\mu_{\infty}\leq \mu (X'\unp),
$$
ce qui implique que 
$$
\mu (X\unp) \leq \mu (X'\unp).
$$
Cela est en contradiction avec la stabilité de $X\unp$. On en déduit que $(L_\infty)_{div}=0$, 
ce qui nous permet de conclure que \og l'algorithme de descente converge en temps fini\fg{}.
\end{proof}

\begin{lemm}[cf. \cite{Boulbaki} A VII.58, exo. 6] \label{lemme:stupide stupide}
Tout sous-$\Zp$-module d'un $\Qp$-espace vectoriel de dimension finie est isomorphe à la somme directe d'un $\Zp$-module libre de rang fini et d'un $\Qp$-espace vectoriel de dimension finie.
\end{lemm}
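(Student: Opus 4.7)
The plan is first to split off the maximal divisible $\Zp$-submodule of $M$, then to show the complementary piece is a finitely generated free $\Zp$-module by induction on the $\Qp$-dimension of the ambient space. Let $M \subset V$ with $V$ a finite-dimensional $\Qp$-vector space, and let $M_{\mathrm{div}} \subset M$ denote the sum of all divisible $\Zp$-submodules of $M$. Since $V$ is torsion-free, so is $M_{\mathrm{div}}$, hence $M_{\mathrm{div}}$ is a torsion-free divisible $\Zp$-module, i.e.\ a $\Qp$-subspace of $V$. A $\Qp$-vector space is divisible, hence injective as a $\Zp$-module, so the inclusion $M_{\mathrm{div}} \hookrightarrow M$ admits a retraction and $M = M_{\mathrm{div}} \oplus M'$ for some $\Zp$-submodule $M'$. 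Maximality of $M_{\mathrm{div}}$ together with $M' \cap M_{\mathrm{div}} = 0$ implies that $M'$ contains no non-zero divisible $\Zp$-submodule; it therefore suffices to prove that $M'$ is free of finite rank.

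I would proceed by induction on $d = \dim_{\Qp}(\Qp \cdot M')$, the case $d = 0$ being trivial. For $d \geq 1$, I would pick a $\Qp$-hyperplane $H$ of $\Qp \cdot M'$ with $M' \not\subset H$. Applying the inductive hypothesis to $M' \cap H \subset H$ yields a decomposition into a divisible part plus a free $\Zp$-module of finite rank; but the divisible part would again be a divisible submodule of $M'$, hence is zero, so $M' \cap H \cong \Zp^k$ for some $k$. The quotient $M'/(M' \cap H)$ embeds into $(\Qp \cdot M')/H \cong \Qp$ and is thus either $0$, free of rank one, or all of $\Qp$. In the first two cases the quotient is projective, so the short exact sequence
\[
0 \to M' \cap H \to M' \to M'/(M' \cap H) \to 0
\]
splits and $M'$ is free of finite rank, as required.

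The heart of the proof, and the expected main obstacle, is to rule out the remaining case $M'/(M' \cap H) \cong \Qp$ by exhibiting a non-zero divisible submodule of $M'$. For each $n \geq 0$ I would choose a lift $y_n \in M'$ of $p^{-n} \in \Qp$; then $a_n := p\, y_{n+1} - y_n$ lies in $M' \cap H \cong \Zp^k$. Since $\Zp^k$ is $p$-adically complete, the series $b_n := \sum_{j \geq 0} p^j a_{n+j}$ converges to an element of $M' \cap H$, and a direct computation shows that $y'_n := y_n + b_n$ satisfies $p\, y'_{n+1} = y'_n$ for every $n$, while $y'_0 \neq 0$ since it still maps to $1 \in \Qp$. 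The submodule $\sum_{n \geq 0} \Zp \cdot y'_n \subset M'$ is then a non-zero divisible $\Zp$-submodule of $M'$, contradicting the first paragraph and completing the proof.
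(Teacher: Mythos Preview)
Your proof is correct and follows essentially the same route as the paper: induction on the ambient dimension, with the crux being the vanishing $\Ext^1_{\Zp}(\Qp,\Zp)=0$. The paper states this vanishing abstractly as $R^1\varprojlim_n p^n\Zp=0$, whereas your convergent series $b_n=\sum_{j\geq 0}p^j a_{n+j}$ is precisely the hands-on computation of that $R^1\varprojlim$ vanishing via $p$-adic completeness of $\Zp^k$.
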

\begin{proof}
Cela résulte essentiellement, par récurrence sur la dimension du $\Qp$-espace vectoriel considéré, de l'annulation 
$$
\Ext^1_{\Zp} (\Qp,\Zp)=0.
$$
En écrivant $\Qp=\bigcup_{n\geq 0} p^{-n}\Zp$, cela se dévisse en l'annulation 
\begin{equation*}
R^1\underset{n\geq 0}{\limp} \Hom (p^{-n}\Zp,\Zp) = R^1\underset{n\geq 0}{\limp} p^n \Zp = 0 \qedhere
\end{equation*}
\end{proof}

\begin{rema}
Le lemme \ref{lemme:stupide stupide} est faux pour des modules sur un anneau de valuation discrète non complet. C'est par exemple le cas de $\Z_{(p)}$, puisque $R^1\underset{n\geq 0}{\limp} p^n \Z_{(p)}\neq 0$. Il existe un  $\Z_{(p)}$-module $M$ extension non scindée de $\Q$ par $\Z_{(p)}$. Ce n'est pas un $\Z_{(p)}$-module de type fini, mais on a $M_{div}=0$.
\end{rema}

\begin{lemm}\label{lemme:bornes lemme clef}
Soit $X'\drt X$ un morphisme dans $\mathcal{M}^{HT}_{\Zp}$ tel que 
\begin{enumerate}
\item 
$T(X')$ soit facteur direct dans $T(X)$ et induit donc une injection 
$$
\{\text{sous-groupes finis de } T(X')\otimes \Qp/\Zp \} \hookrightarrow
\{\text{sous-groupes finis de } T(X)\otimes \Qp/\Zp \},
$$
\item on ait $\omega_{X'}\hookrightarrow \omega_{X}$ i.e. est injectif.
\end{enumerate}
 Il existe
alors une constante $C$ telle que pour tout sous-groupe fini $N$ de
$T(X')\otimes \Qp/\Zp$, on ait
$$
 \deg\left (
    \overline{N}^{X}\right ) -C\leq 
 \deg \left (\overline{N}^{X'}\right ) \leq \deg\left (
    \overline{N}^{X}\right )
$$
où $\overline{N}^X$, resp. $\overline{N}^{X'}$, désigne l'adhérence schématique dans $X$, resp. $X'$.
\end{lemm}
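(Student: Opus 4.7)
The plan is to unwind the schematic closures and reduce the uniform bound to a statement about the torsion of $Q := \omega_X/\omega_{X'}$.

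Write $X' = (T',\omega',\alpha')$ and $X = (T,\omega,\alpha)$, with $T' \subset T$ a direct factor and $\omega' \hookrightarrow \omega$. Condition (1) makes the induced map $T'\otimes\Qp/\Zp \hookrightarrow T\otimes\Qp/\Zp$ injective, so any finite $N$ on the source side is unambiguously also a subgroup on the target side, and it is the underlying torsion group of both $\overline{N}^{X'}$ and $\overline{N}^X$. Choose $n$ large so that $N \subset p^{-n}T'/T'$. Unwinding the definitions of section~\ref{sec:quelques definitions}, the $\omega$-parts of the two closures are $\O_C\cdot\alpha'(N)\subset p^{-n}\omega'/\omega'$ and $\O_C\cdot\alpha(N)\subset p^{-n}\omega/\omega$, and the natural map $\pi_n : p^{-n}\omega'/\omega' \to p^{-n}\omega/\omega$ induced by $\omega'\hookrightarrow \omega$ (well defined since $p^{-n}\omega'\subset p^{-n}\omega$) restricts to a \emph{surjection}
$$\pi : \O_C\cdot\alpha'(N) \twoheadrightarrow \O_C\cdot\alpha(N).$$
This immediately gives $\deg(\omega_{\overline{N}^{X'}}) \geq \deg(\omega_{\overline{N}^X})$, and by the formula $\deg = \log_p|N| - \deg(\omega_{\bullet})$ one obtains the right inequality $\deg(\overline{N}^{X'}) \leq \deg(\overline{N}^X)$.

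For the left inequality the gap is precisely $\deg(\ker \pi)$, which is bounded by $\deg(\ker \pi_n)$; it thus suffices to bound $\deg(\ker \pi_n)$ uniformly in $n$. Identifying $p^{-n}\omega/\omega \cong \omega/p^n\omega$ via multiplication by $p^n$ (and similarly for $\omega'$), the map $\pi_n$ becomes the natural map $\omega'/p^n\omega' \to \omega/p^n\omega$. Tensoring the short exact sequence
$$0 \to \omega' \to \omega \to Q \to 0$$
by $\O_C/p^n$ and using that $\omega',\omega$ are free (so the relevant Tor groups vanish), the kernel of $\pi_n$ is identified with $Q[p^n]$, the $p^n$-torsion of $Q$.

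Finally, because $\omega'$ and $\omega$ are free of finite rank, $Q$ is a finitely presented module over the valuation ring $\O_C$, hence by the classical structure theorem splits as $Q \simeq \O_C^s \oplus \bigoplus_{i=1}^t \O_C/(a_i)$. The free part contributes no $p$-torsion, and an elementary snake-lemma computation applied to $0 \to \O_C \xrightarrow{a_i} \O_C \to \O_C/(a_i) \to 0$ gives $\deg((\O_C/(a_i))[p^n]) = \min(v(a_i),n)$. Summing,
$$\deg Q[p^n] = \sum_i \min(v(a_i), n) \leq \sum_i v(a_i) = \deg(Q_{\mathrm{tors}}) =: C,$$
a constant depending only on the morphism $X' \to X$ and not on $N$ or $n$. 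This $C$ does the job. The only non-formal ingredient is the structure theorem for finitely presented modules over a valuation ring; the rest is bookkeeping with Fitting ideals.
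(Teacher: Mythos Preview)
Your proof is correct and follows essentially the same strategy as the paper's, with a slightly different packaging. The paper factors $\omega' \hookrightarrow \omega'' \hookrightarrow \omega$ through the saturation $\omega'' = \omega \cap (\omega'\otimes C)$, uses that $p^{-n}\omega''/\omega'' \hookrightarrow p^{-n}\omega/\omega$ is injective (since $\omega''$ is a direct summand of $\omega$), and bounds the kernel of $p^{-n}\omega'/\omega' \to p^{-n}\omega''/\omega''$ by $\omega''/\omega'$, giving $C = \deg(\omega''/\omega')$. You instead compute $\ker \pi_n = Q[p^n]$ directly via $\mathrm{Tor}_1^{\O_C}(Q,\O_C/p^n)$ and bound it by $\deg(Q_{\mathrm{tors}})$ using the structure theorem. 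Since $Q_{\mathrm{tors}} = \omega''/\omega'$, the two arguments produce the same constant; your Tor-based route is a clean alternative to the paper's explicit saturation step.
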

\begin{proof}
Notons $X=(T,\omega,\a)$ et $X'=(T',\omega',\a')$. 
Il y a un diagramme
$$
\xymatrix@R=6mm{
T' \ar@{^(->}[r] \ar[d]_{\a'} & T \ar[d]^\a \\
\omega'\ar@{^(->}[r] & \omega
}
$$
Notons $\omega''=\omega \cap \left (\omega'\otimes C\right ) $. Il y a une factorisation
$$
\omega'\hookrightarrow \omega''\hookrightarrow \omega
$$
avec $\omega''$ facteur direct dans $\omega$. Soit $c$ un entier tel
que $\omega''/\omega'$ soit annulé par $p^c$. Pour un entier $n\geq c$,
il y a une suite exacte
$$
0\ldrt \omega''/\omega'\ldrt p^{-n}\omega'/\omega'\xrig{\;u_n\;}
p^{-n}\omega''/\omega''\ldrt \omega''/\omega'\ldrt 0
$$
et une inclusion
$$
i_n: p^{-n}\omega''/\omega'' \hookrightarrow p^{-n}\omega/\omega.
$$
Maintenant, si $N=L/T'$ avec $T'\subset L \subset p^{-n}T'$ et
$n\geq c$, $\overline{N}^{X'[p^n]}$ est donné par le diagramme
$$
L/T'\xrig{\;\a'\;} \O_C.\a'(L)/\omega',
$$
tandis que $\overline{N}^{X[p^n]}$ est donné par le composé
$$
L/T'\xrig{\;\a'\;} \O_C.\a' (L)/\omega' \xrig{\;u_n\;} u_n \left (
  \O_C.\a'(L)/\omega'\right ) \xrig{\ \sim \ } i_n\circ u_n \left (
  \O_C.\a'(L)/\omega'\right ).
$$
Or, il y a une suite exacte
$$
0\ldrt \ker u_n \cap \left ( \O_C.\a'(L)/\omega'\right )\ldrt
\O_C.\a'(L)/\omega' \ldrt  u_n \left (
  \O_C.\a'(L)/\omega'\right )\ldrt 0
$$
avec $\ker u_n \cap \left ( \O_C.\a'(L)/\omega'\right ) \subset
\omega''/\omega'$. Si $C=v(\mathrm{Fitt}_0 \ \omega''/\omega' )$ on a
donc
$$
v\left ( \mathrm{Fitt}_0\
\O_C.\a'(L)/\omega' \right ) -C \leq 
v\left (\mathrm{Fitt}_0 \ u_n \left (
  \O_C.\a'(L)/\omega'\right )\right )\leq v\left (\mathrm{Fitt}_0\
\O_C.\a'(L)/\omega' \right )
$$
d'où l'inégalité annoncée. 
\end{proof}

\section{Retour aux groupes $p$-divisibles: deux résultats clef sur les polygones de HN renormalisés}
\label{sec:Retour aux gpdiv}

Dans cette section le corps $K|\Qp$ est quelconque. On note $C=\widehat{\overline{K}}$. 

\subsection{Le polygone renormalisé est un polygone}

Rappelons le résultat suivant: si $G$ est un groupe plat fini sur $\O_C$ alors le conoyau
de 
$$
\a_G\otimes 1: G(\O_C)\otimes \O_C\ldrt \omega_{G^D}
$$
est annulé par $p^{1/(p-1)}$ si $p\neq 2$ et $4$ si $p=2$ (\cite{LivreIso} théo. II.1.1, \cite{FarguesCanonique} théo. 3). C'est cette estimée qui a inspiré à l'auteur l'introduction des filtration de Harder-Narasimhan des modules de Hodge-Tate.

\begin{theo}\label{theo:poly renormalise est un poly}
Pour $H\in \BT_{\O_K}$:
\begin{enumerate}
\item Le polygone renormalisé $\HN (H)$ est un polygone à abscisses et ordonnées de rupture entières.
\item  Plus précisément, $\HN (H)$ coïncide avec le polygone de Harder-Narasimhan de son application de Hodge-Tate $V_p(H)\xrightarrow{\ \a_H\ } \omega_{H^D}\otimes C$ dans $\M^{\HT}_{\Qp}$.
\end{enumerate}
\end{theo}
\begin{proof}
On suppose $p\neq 2$, le cas $p=2$ étant laissé au lecteur. 
Pour tout sous-groupe plat fini $G$ de $H[p^n]$, puisque $\omega_{H^D}/p^n\omega_{H^D} \twoheadrightarrow \omega_{G^D}$, $\omega_{G^D}$ est engendré par $\dim H^D$-éléments. Couplé avec le fait que $\mathrm{coker} (\a_G\otimes 1)$ est annulé par $p^{1/(p-1)}$, on en déduit que 
$$
\deg \omega_{G^D} - \frac{\dim H^D}{p-1}\leq   \deg \mathrm{coker} (\a_G\otimes 1) \leq \deg \omega_{G^D}.
$$
Soit $X=(T_p(H),\mathrm{Im} (\a_H\otimes 1),\a_H)\in \M^{\HT}_{\Zp}$. On a donc 
$$
\HN (H[p^n]) \leq \HN (X[p^n]) \leq \HN (H[p^n]) + \frac{\dim H^D}{p-1}.
$$
En divisant par $n$ et en appliquant le procédé de renormalisation de la définition \ref{SFKEGI39459sguzSF20refq}, on obtient le résultat grâce à la proposition \ref{prop:sur essentiel} et au théorème \ref{theo:essentiel}.
\end{proof}

\begin{rema}
Le théorème précédent ne dit pas que $H$ est isogène à un groupe de type HN. \`A priori si la valuation de $K$ n'est pas discrète il n'y a pas de raison pour que ce soit le cas, c'est seulement le cas pour son module de Hodge-Tate entier (cf. néanmoins théo. \ref{theo:theo principal ou pas}). 
\end{rema}

\begin{rema}
D'après \cite{ScholzeWeinstein}, lorsque $K=C$, la catégorie $\M^{\HT}_{\Zp}$ est équivalente à $\BT_{\O_C}$. Néanmoins, il faut faire attention à ce qu'il ne s'agit pas d'une équivalence de catégories exactes. Une suite $0\drt H_1\drt H_2\drt H_3\drt 0$ dans $\BT_{\O_C}$,  qui donne lieu à une suite exacte dans $\M^{\HT}_{\Zp}$, est exacte si et seulement si la suite $0\drt \omega_{H_1^D}\drt \omega_{H_2^D}\drt \omega_{H_3^D}\drt 0$ est exacte. Or, dans $\M_{\Zp}^{\HT}$ on remplace $\omega_{H^D}$ par l'image de $\a_H\otimes 1$. 
\end{rema}

\subsection{Inégalité entre Newton et Harder-Narasimhan}
\subsubsection{Polygone de HN des modifications admissibles de fibrés sur la courbe}

Soit $X=X_{C^\flat}$ la courbe schématique munie de son point à l'infini $\infty\in |X|$ 
de corps résiduel $C$ (\cite{Courbe}). Considérons la catégorie exacte des modifications admissibles effectives  de fibrés sur la courbe (cf. \cite{ConfLaumon}),
$$
\mathrm{Modif}^{ \mathrm{ad},\geq 0} = \{ (\E_1,\E_2, u)\}
$$
où 
\begin{itemize}
\item $\E_1$ est un fibré semi-stable de pente $0$,
\item $\E_2$ est un fibré sur la courbe,
\item $u:\E_1\hookrightarrow \E_2$ est une modification supportée en $\infty$.
\end{itemize}

On définit des fonctions degré et rang sur cette catégorie en posant
\begin{eqnarray*}
\deg (\E_1,\E_2, u) &=& \mathrm{long} (\mathrm{coker} u ) \\
\mathrm{rg} (\E_1,\E_2, u) &=& \mathrm{rg} \,\E_1.
\end{eqnarray*}
On note $\mu=\frac{\deg}{\mathrm{rg}}$ la fonction pente associée. On dispose d'un \og foncteur fibre générique\fg{} sur $\mathrm{Modif}^{ \mathrm{ad},\geq 0}$ 
$$
(\E_1,\E_2,u) \longmapsto H^0(X,\E_1)
$$
à valeurs dans les $\Qp$-espaces vectoriels de dimension finie. Plus précisément, ce foncteur induit une bijection entre les sous-objets stricts de $(\E_1,\E_2,u)$ et les sous-espaces vectoriels de $H^0(X,\E_1)$. De plus, si un morphisme
$$
(\E_1,\E_2,u) \ldrt (\E'_1,\E'_2,u')
$$
induit un isomorphisme $H^0(X,\E_1)\xrig{\sim} H^0(X,\E'_1)$, i.e. de façon équivalente un isomorphisme $\E_1\xrig{\sim}\E'_1$, on a alors un diagramme
$$
\xymatrix@C=8mm@R=8mm{
0\ar[r] &  \E_1\ar[d]^{\simeq}\ar[r] & \E_2 \ar@{^(->}[d]\ar[r] & \mathrm{coker} (u) \ar[d]\ar[r] & 0 \\
0\ar[r] & \E'_1\ar[r] & \E'_2\ar[r] & \mathrm{coker} (u')\ar[r] & 0
}
$$
où $\E_2\drt \E'_2$ est un monomorphisme, puisque c'est le cas génériquement (et c'est donc une modification de fibrés). On en déduit que $\mathrm{coker} (u)\drt \mathrm{coker} (u')$ est un monomorphisme et que 
$$
\deg (\E_1',\E_2',u') =  \deg (\E_1,\E_2,u) + \mathrm{long}\ \mathrm{coker} \big ( \mathrm{coker} (u)\hookrightarrow \mathrm{coker} (u') \big ).
$$
On a  donc $\deg (\E_1',\E_2',u') \geq  \deg (\E_1,\E_2,u)$ avec égalité si et seulement si le morphisme est un isomorphisme. On dispose donc de filtrations de Harder-Narasimhan dans $\mathrm{Modif}^{ \mathrm{ad},\geq 0}$. La proposition suivante est le point clef que nous utiliserons.

\begin{prop}\label{prop:inegalite poly HN de modif et Newton}
Pour $(\E_1,\E_2,u)\in \mathrm{Modif}^{ \mathrm{ad},\geq 0}$ on a 
$$
\HN (\E_1,\E_2,u) \leq \HN (\E_2).
$$
\end{prop}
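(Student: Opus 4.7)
The plan is to establish the polygon inequality pointwise by reducing it to a comparison of degrees on strict sub-objects, then using that the slope-$0$ semistable bundle $\E_1$ contributes degree zero on any sub-object. Concretely, I would fix $r \in \{0,1,\dots,\mathrm{rg}\,\E_1\}$ and show that every strict sub-object $(\F_1,\F_2,v) \hookrightarrow (\E_1,\E_2,u)$ of rank $r$ satisfies $\deg(\F_1,\F_2,v) \leq \HN(\E_2)(r)$; taking the supremum over such sub-objects (which is the definition of $\HN(\E_1,\E_2,u)(r)$) then yields the desired inequality.

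The key structural input comes from the discussion immediately preceding the statement: a strict sub-object is encoded by a $\Qp$-subspace $W \subseteq H^0(X,\E_1)$, which determines $\F_1 \subseteq \E_1$ (the corresponding semistable bundle of slope $0$), and $\F_2$ then appears as a genuine sub-bundle of $\E_2$, with $v : \F_1 \hookrightarrow \F_2$ a modification supported at $\infty$. In particular $\mathrm{rg}\,\F_2 = \mathrm{rg}\,\F_1 = r$ since a modification preserves generic rank, and $\F_2$ is a rank-$r$ sub-bundle of $\E_2$.

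Now I compute the degree in two ways. Since $\F_1$ is semistable of slope $0$ we have $\deg \F_1 = 0$, so the short exact sequence
\[
0 \ldrt \F_1 \xrig{\;v\;} \F_2 \ldrt \mathrm{coker}(v) \ldrt 0
\]
yields
\[
\deg(\F_1,\F_2,v) \;=\; \mathrm{long}\bigl(\mathrm{coker}(v)\bigr) \;=\; \deg \F_2 - \deg \F_1 \;=\; \deg \F_2.
\]
Finally, because $\F_2$ is a sub-bundle of $\E_2$ of rank $r$, the very definition of the Harder-Narasimhan polygon of the vector bundle $\E_2$ on the curve $X$ gives $\deg \F_2 \leq \HN(\E_2)(r)$, hence $\deg(\F_1,\F_2,v) \leq \HN(\E_2)(r)$.

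I do not foresee any serious obstacle here; the only delicate point is to justify that a strict sub-object $(\F_1,\F_2,v)$ really produces a sub-bundle inclusion $\F_2 \hookrightarrow \E_2$ rather than merely a morphism, but this is exactly the content of the commutative diagram recalled just before the statement (the vertical arrow $\E_2 \hookrightarrow \E_2'$ is a monomorphism of vector bundles because it is so generically, and one reverses this for sub-objects). Once that is in hand, the argument is essentially a one-line computation based on $\deg \F_1 = 0$.
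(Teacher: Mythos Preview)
Your proof is correct and follows essentially the same approach as the paper's own proof: for a strict sub-object $(\F_1,\F_2,v)$ one has $\deg(\F_1,\F_2,v)=\deg\F_2$ (since $\deg\F_1=0$ by slope-$0$ semistability), and the inclusion $\F_2\hookrightarrow\E_2$ gives $\deg\F_2\leq\HN(\E_2)(\mathrm{rg}\,\F_2)$. The paper states this in one line; your version simply spells out the short exact sequence computation behind the equality $\deg(\F_1,\F_2,v)=\deg\F_2$.
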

\begin{proof}
Il s'agit d'une simple conséquence de ce que, pour $(\E'_1,\E'_2,u')$ un sous-objet strict
de $(\E_1,\E_2,u)$, on a 
$$
\deg (\E'_1,\E'_2,u') =\deg (\E'_2) \leq \HN (\E_2) \big ( \mathrm{rg} \,\E'_2\big )
$$
puisque $\E'_2\hookrightarrow \E_2$.
\end{proof}

Il y a un foncteur exacte pleinement fidèle
\begin{equation}\label{eq:foncteur modules filtres vers modifications}
\VectFil_{C/\Qp}^{[0,1]} \ldrt \mathrm{Modif}^{ \mathrm{ad},\geq 0}
\end{equation}
d'image la catégorie des modifications \og minuscules\fg{}, où $\VectFil_{C/\Qp}^{[0,1]}$  est la sous-catégorie pleine de $\VectFil_{C/\Qp}$ formée des couples $(V,\Fil^\bullet V_C)$ tels que 
$$
\begin{cases}
\Fil^0 V_C=V_C \\
\Fil^2 V_C = (0).
\end{cases}
$$
À $(V,\Fil^\bullet V_C)$ on associe la modification $(\E_1,\E_2,u)$ telle que 
\begin{itemize}
\item $\E_1= V\otimes_{\Qp} \O_X$
\item $\widehat{\E}_{2,\infty} / \widehat{\E}_{1,\infty} =t^{-1} \Fil^1 V_C \subset V\otimes B_{dR}/B^+_{dR}$.
\end{itemize}

Les fonctions pentes se correspondent via ce foncteur et la filtration de Harder-Narasimhan de $(\E_1,\E_2,u)$ est induite par celle de $(V,\Fil^\bullet V_C)$. 
\\

Rappelons maintenant (\cite{Courbe} chap. 8 et \cite{ScholzeWeinstein} prop. 5.1.6) que si $H$ est un groupe $p$-divisible sur $\O_C$  de fibre spéciale $H_{k_C}$, alors via le foncteur (\ref{eq:foncteur modules filtres vers modifications}) composé avec le foncteur
$$
\M^{\HT}_{\Qp} \ldrt \VectFil^{[-1,0]} 
$$
de la section \ref{sec:quelques definitions}, 
on a
$$
(V_p(H), \omega_{H^D}\unp, \a_H) \longmapsto (V_p (H)\otimes_{\Qp} \O_X, \E (\mathbb{D} (H_{k_C}),p^{-1}\ph ), u),
$$
où $u$ est un morphisme de comparaison de périodes cristallines de Fontaine (il s'agit essentiellement de la traduction en termes de fibrés vectoriels des théorèmes de comparaison de Fontaine pour les groupes $p$-divisibles) et $(\mathbb{D} (H_{k_C}),\ph)$ est le module de Dieudonné covariant de $H_{k_C}$. De cela, de la proposition \ref{prop:inegalite poly HN de modif et Newton} et du théorème \ref{theo:poly renormalise est un poly}, on déduit le résultat suivant.

\begin{theo}\label{theo:inegalite Newton et HN}
Pour $H\in \BT_{\O_K}$ on a 
$$
\HN (H)\leq \Newt (H_k)^{\diamond}.
$$
En particulier, si la fibre spéciale $H_k$ est isocline, $\HN(H)$ est une droite de pente $\frac{\dim H}{\htt H}$.
\end{theo}

\section{Application aux espaces de modules de groupes $p$-divisibles}
\label{sec:application aux espaces de modules}

\subsection{Déstabilisation par une isogénie des groupes $p$-divisibles semi-stables}

Soit $K|\Qp$ quelconque comme précédemment et $H\in \BT_{\O_K}$.

\begin{prop}\label{prop:destabilisation}
Supposons $H$ semi-stable et soit $f:H\drt H'$ une isogénie qui n'est pas un isomorphisme. Sont équivalents:
\begin{enumerate}
\item $H'$ est semi-stable,
\item $\ker f$ est un groupe plat fini semi-stable de pente $\mu (H)$.
\end{enumerate}
\end{prop}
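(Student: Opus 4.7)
The plan is to reduce both implications to straightforward slope arithmetic in short exact sequences of finite flat group schemes, using that since $f$ is an isogeny, $\mu(H)=\mu(H')$. Set $\lambda:=\mu(H)=\mu(H')$ and $G:=\ker f$.

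For (2) $\Rightarrow$ (1) I would use characterization (3) of Lemma \ref{lestabiltiozgroupsfontianemaroc}: it suffices to check $\mu(M')\leq \lambda$ for every finite flat sub-group scheme $M'\subset H'$. The key construction is $M:=H\times_{H'}M'$. Because $f$ is faithfully flat with finite flat kernel $G$, the projection $M\to M'$ is a $G$-torsor, so $M$ is a finite flat group scheme over $\O_K$ sitting in an exact sequence
$$
0\ldrt G\ldrt M\ldrt M'\ldrt 0.
$$
Additivity of $\deg$ and $\htt$ gives $\deg M=\deg G+\deg M'$ and $\htt M=\htt G+\htt M'$. Since $H$ is semi-stable, $\mu(M)\leq \lambda$; combined with the hypothesis $\mu(G)=\lambda$, the relation $\deg G+\deg M'\leq \lambda(\htt G+\htt M')$ simplifies to $\deg M'\leq \lambda\cdot \htt M'$, i.e. $\mu(M')\leq \lambda=\mu(H')$. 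Hence $H'$ is semi-stable.

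For (1) $\Rightarrow$ (2) I would choose $n$ large enough that $G\subset H[p^n]$. The map $f$ restricted to $H[p^n]$ has kernel $G$ and factors through a closed immersion $H[p^n]/G \hookrightarrow H'[p^n]$, producing an exact sequence $0\to G\to H[p^n]\to H[p^n]/G\to 0$ whose total slope is $\lambda$. Semi-stability of $H$ gives $\mu(G)\leq \lambda$ and semi-stability of $H'$ (applied to the subgroup $H[p^n]/G$ of $H'[p^n]$) gives $\mu(H[p^n]/G)\leq \lambda$. Because $\lambda$ is the weighted mean of these two slopes and $\htt G>0$ (as $f$ is not an isomorphism), both inequalities are equalities; in particular $\mu(G)=\lambda$. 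Semi-stability of $G$ is then immediate: any finite flat sub-group $G_1\subset G\subset H$ satisfies $\mu(G_1)\leq \lambda=\mu(G)$ by semi-stability of $H$.

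The only non-routine ingredient is the first step of the first implication, namely recognizing $H\times_{H'}M'$ as a finite flat group scheme fitting in the asserted exact sequence; this rests on the fact that $f$ is faithfully flat with finite flat kernel and standard descent for $G$-torsors, so it should pose no serious obstacle. All remaining content is the linear arithmetic of $(\deg,\htt)$ in short exact sequences, exactly as in the proof of Proposition \ref{DKGSISG92447EGsokotriplosmp}.
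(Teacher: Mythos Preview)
Your argument for (1)$\Rightarrow$(2) is essentially the paper's: both use the exact sequence $0\to G\to H[p^n]\to H[p^n]/G\to 0$, bound $\mu(H[p^n]/G)\leq\lambda$ via semi-stability of $H'$, and conclude $\mu(G)=\lambda$ by slope-averaging (the paper phrases it contrapositively but it is the same computation).

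For (2)$\Rightarrow$(1) you take a genuinely different route. The paper does not test an arbitrary $M'\subset H'$; instead it shows directly that $H'[p]$ is semi-stable by considering $p^{-1}G\subset H$: the sequence $0\to H[p]\to p^{-1}G\xrightarrow{p} G\to 0$ exhibits $p^{-1}G$ as an extension of semi-stables of slope $\lambda$, hence semi-stable of slope $\lambda$, and then $0\to G\to p^{-1}G\to H'[p]\to 0$ gives $H'[p]$ as a quotient of such, hence semi-stable. Your approach pulls back an arbitrary $M'\subset H'$ to $M=H\times_{H'}M'$ and does the slope arithmetic there. This is correct: since $M'\subset H'[p^n]$ and $G\subset H[p^m]$, the preimage $M$ sits in $H[p^{n+m}]$, and the quotient $H[p^{n+m}]\twoheadrightarrow H[p^{n+m}]/G$ is fppf, so its base change $M\to M'$ is as well, giving the required exact sequence of finite flat group schemes. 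Your argument is more self-contained (it avoids invoking that semi-stables of a fixed slope are closed under extensions and quotients), while the paper's exploits the reduction to $H'[p]$ and stays entirely inside the abelian category of semi-stables of slope $\lambda$.
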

\begin{proof}
Notons $G=\ker f$. 
Supposons $H'$ semi-stable. Soit $n\geq 1$ tel que $G\subsetneq H[p^n]$. Puisque 
$H[p^n]/G\subset H'$, on a par semi-stabilité de $H'$
$$
\mu (H[p^n]/G)\leq \mu (H')=\mu(H).
$$
De la suite exacte 
$$
0\ldrt G\ldrt H[p^n]\ldrt H[p^n]/G\ldrt 0,
$$
on déduit que si $\mu(G)<\mu (H)$ alors $\mu (H[p^n])<\mu (H)$. Cela est impossible puisque $\mu(H[p^n])=\mu (H)$. On en déduit que $\mu(G)=\mu(H)$ et $G$ est donc semi-stable.

Réciproquement, si $\mu(G)=\mu (H)$, il y a une suite exacte 
$$
0\ldrt H[p]\ldrt p^{-1}G\xrig{\ \times p\ } G\ldrt 0.
$$
Celle-ci implique que $p^{-1} G$ est semi-stable de pente $\mu (H)$. De la suite exacte 
$$
0\ldrt G\ldrt p^{-1} G \ldrt H'[p]\ldrt 0
$$
on déduit alors que $H'[p]$ est semi-stable.
\end{proof}

Notons le corollaire évident suivant mais qui aura une interprétation géométrique agréable.

\begin{coro}
Si $H[p]$ est stable alors pour tout isogénie $H\drt H'$ ne se factorisant pas par la multiplication par $p$, $H'$ n'est pas semi-stable.
\end{coro}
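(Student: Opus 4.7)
The plan is to apply Proposition \ref{prop:destabilisation} and then exploit the strict inequality coming from stability. By that proposition, $H'$ is semi-stable if and only if $G=\ker f$ is semi-stable with $\mu(G)=\mu(H)$. Since $f$ does not factor through multiplication by $p$ on $H$, one has $H[p]\not\subset G$; moreover $G\neq 0$ since $f$ is not an isomorphism. The goal is thus to rule out the existence of a nonzero finite flat subgroup $G\subset H$, with $H[p]\not\subset G$, which is semi-stable of pente $\mu(H)$.

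The key lemma I would establish is a stability inheritance for the $p$-torsion: if $G\subset H$ is finite flat and semi-stable of pente $\lambda$, then $G[p]:=G\cap H[p]$ is again semi-stable of pente $\lambda$. Indeed, write $pG$ for the scheme-theoretic image of multiplication by $p$ on $G$, so that one has an exact sequence
$$
0\ldrt G[p]\ldrt G\xrig{\;\times p\;}pG\ldrt 0
$$
with $pG\simeq G/G[p]$. Semi-stability of $G$ forces $\mu(pG)\leq \lambda$ (it is a subgroup of $G$) and $\mu(pG)\geq \lambda$ (it is a quotient of $G$), whence $\mu(pG)=\lambda$. Additivity of $\deg$ and $\htt$ along this exact sequence then yields $\mu(G[p])=\lambda$, and any subgroup of $G[p]$ being in particular a subgroup of $G$, we see that $G[p]$ is semi-stable.

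The corollary then follows immediately: applied to $\lambda=\mu(H)$, the lemma produces a subgroup $G[p]$ of $H[p]$ with $\mu(G[p])=\mu(H)=\mu(H[p])$; the assumption $G\neq 0$ gives $G[p]\neq 0$, and $H[p]\not\subset G$ gives $G[p]\subsetneq H[p]$. A proper nonzero subgroup of $H[p]$ with pente equal to $\mu(H[p])$ contradicts the stability of $H[p]$.

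The only slight subtlety to verify is the exactness of the sequence $0\ldrt G[p]\ldrt G\xrig{\;\times p\;} pG\ldrt 0$ in the category of finite flat group schemes over $\O_K$ with $K$ possibly non-discretely valued; but $G$ being a subgroup of $H[p^n]$ for some $n$, everything happens inside the $p$-divisible group $H$ and the degree-height additivity systematically used in \cite{HNgp} applies directly.
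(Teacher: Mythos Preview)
Your proof is correct and is exactly the ``�vident'' argument the paper has in mind (no proof is written there). Two small points worth tightening. First, note explicitly that $H$ itself is semi-stable (since $H[p]$ is), so that Proposition~\ref{prop:destabilisation} applies. Second, the step ``$\mu(pG)\geq\lambda$ because $pG$ is a quotient of $G$'' tacitly identifies the scheme-theoretic image $pG$ with the strict quotient $G/G[p]$; this is not automatic in the exact category of all finite flat groups, but holds here because semi-stable groups of fixed slope form an abelian subcategory in which every generic-fibre epimorphism is flat surjective (\cite{HNgp}; cf.\ the proof of Proposition~\ref{prop:astuce BT tronque 1}). You can sidestep this entirely: from $\mu(G[p])\leq\lambda$ (subgroup of $G$) and $\mu(G/G[p])\geq\lambda$ (strict quotient of $G$), additivity along $0\to G[p]\to G\to G/G[p]\to 0$ already forces $\mu(G[p])=\lambda$, and semi-stability of $G[p]$ then follows as you say.
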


La proposition suivante sera fort utile dans la suite.

\begin{prop}\label{prop:astuce BT tronque 1}
Soit $H\in \BT_{\O_K}$ semi-stable tel que $(\dim H,\htt\ H)=1$ et $G\subset H$ un sous-groupe plat fini non nul vérifiant
\begin{enumerate}
\item $H[p]\not\subset G$,
\item $G$ est semi-stable de pente $\mu(H)$.
\end{enumerate}
Alors, si $h=\htt \, H$, $$G\subset H[p^{h}]$$ et pour $1\leq k\leq h-1$, $\htt( p^k G )\leq h-k$. 
\end{prop}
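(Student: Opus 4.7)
The plan is to pin down the $p$-power structure of $G$. Let $m = n(G)$ and set $a_j := \htt(G[p^j]/G[p^{j-1}])$ for $1 \leq j \leq m$, so that $\htt(p^k G) = \sum_{j>k} a_j$. The two assertions of the proposition then become $a_j = 0$ for $j > h$ (i.e.\ $m \leq h$) together with $\sum_{j>k} a_j \leq h-k$ for $1 \leq k \leq h-1$.

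First I would check that $(a_j)_{j=1}^m$ is weakly decreasing, using the generic injections $G[p^j]/G[p^{j-1}] \hookrightarrow G[p^{j-1}]/G[p^{j-2}]$ induced by multiplication by $p$ for $j \geq 2$. Next, for each $j$ the scheme-theoretic image $M_j := p^{j-1}G[p^j]$ is a finite flat subgroup of $H[p]$ of height $a_j$ contained in $G[p]$, and these form a decreasing chain $M_1 \supset M_2 \supset \cdots \supset M_m$ inside $H[p]$. Since $M_j \subset G$ and $H[p] \not\subset G$ by assumption, we have $M_j \subsetneq H[p]$ for every $j$, which already gives the crude bound $a_j \leq h-1$.

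The heart of the proof is to exploit the coprimality $\gcd(d,h) = 1$ (with $d = \dim H$) together with semi-stability of $H$ to deduce that $H[p]$ is in fact \emph{stable}: every proper non-zero $M \subsetneq H[p]$ satisfies $\mu(M) < d/h$ strictly. Applying this to each $M_j$ gives $\deg(M_j) < \frac{d}{h} a_j$. I would then combine these strict inequalities with the equality $\deg(G) = \frac{d}{h}\htt(G) = \frac{d}{h}\sum a_j$ forced by semi-stability of $G$ of slope $d/h$, and with the additivity $\deg(G) = \sum_j \deg(G[p^j]/G[p^{j-1}])$ along the filtration, by comparing $\deg(G[p^j]/G[p^{j-1}])$ to $\deg(M_j)$ via the generic isomorphism $p^{j-1} \colon G[p^j]/G[p^{j-1}] \iso M_j$. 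Summing the resulting strict inequalities should yield the refined linear bounds $\sum_{j>k} a_j \leq h-k$ one after the other, and in particular $m \leq h$.

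The main obstacle I foresee is the core stability step: translating the strict-but-not-quantitative inequality $\mu(M_j) < d/h$ into sharp linear bounds on the $a_j$, especially in the general $K$ setting where degrees are real-valued and classical integrality arguments do not directly apply. I would handle this by reducing to the case $K = C$ algebraically closed and invoking the integral Hodge-Tate module formalism of Section~\ref{sec:Filtration des modules de HT}, where integrality is restored at the level of $\Z_p$-lattices and the combinatorial nature of the desired inequalities emerges cleanly; in the discretely valued setting one can alternatively appeal to Theorem~\ref{theo:filtration HN val disc} to work directly with the HN filtration.
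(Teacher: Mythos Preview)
Your central claim --- that coprimality $(\dim H,\htt\,H)=1$ forces $H[p]$ to be \emph{stable}, so that every proper nonzero $M\subsetneq H[p]$ satisfies $\mu(M)<d/h$ strictly --- is not correct, and this is fatal rather than merely ``not quantitative''. Degrees of finite flat group schemes over $\O_K$ are real numbers, so nothing prevents a subgroup of height $<h$ from having slope exactly $d/h$. In fact your own groups $M_j=p^{j-1}G[p^j]$ are precisely such subgroups: since $G$ and all the $H[p^n]$ lie in the abelian category of semi-stable groups of slope $d/h$, so do all the $G[p^j]$, their successive quotients, and their images $M_j$; hence $\mu(M_j)=d/h$ on the nose. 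Summing the resulting \emph{equalities} $\deg(M_j)=\tfrac{d}{h}a_j$ just reproduces $\deg(G)=\tfrac{d}{h}\htt(G)$ and yields no constraint on the $a_j$. Passing to integral Hodge--Tate modules does not rescue this, since degrees there are again valuations of Fitting ideals and remain real-valued.

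The paper's argument supplies the missing idea, which is of a different nature. One works throughout in the abelian category of semi-stable objects of slope $d/h$ and argues by contradiction: if $pG[p^2]=G[p]$, the exact sequence $0\to G[p]\to G[p^2]\xrightarrow{p}G[p]\to 0$ exhibits $G[p^2]$ as a truncated Barsotti--Tate group of level $2$, so $G[p]$ is a $\BT_1$ and $\omega_{G[p]}$ is free over $\O_K/p$; this forces $\deg(G[p])\in\N$. But $\mu(G[p])=d/h$ with $(d,h)=1$ and $0<\htt(G[p])<h$ then gives a contradiction. Hence $pG[p^2]\subsetneq G[p]$, i.e.\ $a_2<a_1$. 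Passing to $G/G[p]\subset H/G[p]$ (which again satisfy the hypotheses) and iterating gives the strict decrease $a_1>a_2>\cdots$, from which the stated bounds follow. The integrality input is not stability of $H[p]$ but the $\BT_1$ structure that appears exactly when two consecutive $a_j$ coincide.
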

\begin{proof}
Puisque $G$ est semi-stable de pente $\mu(H)$, pour tout $i,j\geq 0$, $p^i G[p^j]$ est semi-stable de pente $\mu (H)$ lorsqu'il est non nul. On va travailler dans la catégorie abélienne des groupes semi-stables de pente fixée $\mu (H)$, catégorie dans laquelle tout épimorphisme en fibre générique est plat surjectif. 
Supposons par l'absurde que $pG[p^2] =G[p]$. La suite exacte 
$$
0\ldrt G[p]\ldrt G[p^2]\xrig{\ \times p\ } G[p]\ldrt 0
$$
montre alors que $G$ est groupe de Barsotti-Tate tronqué d'échelon $2$. Il en résulte que $\deg (G[p])\in \N$. Mais puisque $H[p]\not\subset  G$, on a $G[p]\not\subset H[p]$ et donc l'égalité 
$\mu (H)= \mu (G[p])$ est impossible puisque $(\dim  H, \htt \ H)=1$. On a donc $pG[p^2]\subsetneq G[p]$. 

Puisque $pG[p^2]\subsetneq G[p]$, $$G[p^2]/G[p]=(G/G[p])[p]\subsetneq (H/G[p])[p].$$
On peut alors applique de nouveau le raisonnement précédent pour conclure que, si $G[p^2]\neq G[p]$, alors $p G[p^3]\subsetneq G[p^2]$. On conclut ainsi aisément par récurrence.
\end{proof}

De la démonstration précédente on tire le résultat suivant.

\begin{prop}\label{prop:desta clef deux suite retour}
Soit $H\in \BT_{\O_K}$ et $G\subset H$ un sous-groupe plat fini semi-stable tel que $H[p]\subsetneq G$ et 
$$
\mu (G)\notin \Big \{ \frac{d'}{h'}\ | \ d',h'\in \N, 0\leq d'\leq h' \leq h-1 \Big \}.
$$
Alors, $G\subset H [p^{\htt H}]$ et pour $1\leq k\leq h-1$, $\htt( p^k G )\leq h-k$. 
\end{prop}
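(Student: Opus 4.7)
The plan is to mirror the recursive BT-truncation argument from the proof of Proposition \ref{prop:astuce BT tronque 1}. Since $H[p] \subset G$, the intersection $G \cap H[p]$ forces $G[p] = H[p]$ of height exactly $h$, and the semi-stability of $G$ places every subquotient $p^i G[p^j]$ in the abelian category of semi-stable finite flat groups of slope $\mu(G)$; we work throughout in this category, in which every generic-fibre epimorphism is flat and surjective.

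The key step is to establish $p\, G[p^{k+1}] \subsetneq G[p^k]$ at each level $k \geq 1$ where $G[p^{k+1}] \neq G[p^k]$. Suppose for contradiction that equality holds at some level. Then the exact sequence
$$
0 \to G[p] \to G[p^{k+1}] \xrightarrow{\;\times p\;} G[p^k] \to 0
$$
exhibits $G[p^{k+1}]$ as a truncated Barsotti--Tate group of level $k+1$, so $\omega_{G[p^{k+1}]}$ is free over $\O_K/p^{k+1}$ and hence $\deg G[p^j] \in \mathbb{N}$ for $1 \leq j \leq k+1$. In particular $\mu(G) \cdot h = \deg G[p] \in \mathbb{N}$; writing $\mu(G) = a/b$ in lowest terms with $\gcd(a,b)=1$ and $b \geq h$ by hypothesis, we get $b \mid ah$ and hence $b \mid h$, forcing $b = h$. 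The slope-exclusion hypothesis is set up precisely to rule out this last possibility.

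Granted the strict decrement $p\,G[p^{k+1}] \subsetneq G[p^k]$, the remainder follows by the same iterative descent: the resulting strict containment $(G/G[p])[p] \subsetneq (H/G[p])[p]$ puts the pair $(H/G[p],\, G/G[p])$ in position for a recursive application of the same argument, since $H/G[p]$ is again a $p$-divisible group of height $h$ isogenous to $H$ and $G/G[p]$ is semi-stable of the same slope $\mu(G)$ inside it. Cascading the decrements through the chain of successive quotients yields both bounds $\htt(p^k G) \leq h - k$ for $1 \leq k \leq h-1$ and $G \subset H[p^h]$. The main obstacle will be the BT integrality step itself: the denominator constraint $b \mid h$ is compatible with $b \geq h$ (forcing $b = h$), so one must invoke the slope-exclusion in its sharp form to exclude $\mu(G) = a/h$ as well, and it is precisely this extra bite in the hypothesis on $\mu(G)$ that distinguishes the present generality from the setting of Proposition \ref{prop:astuce BT tronque 1}, where the coprimality $(\dim H, h) = 1$ played this role automatically.
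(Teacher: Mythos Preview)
There is a genuine gap in your argument, and it stems from taking the printed hypothesis $H[p]\subsetneq G$ at face value; this is a typographical slip in the paper for $H[p]\not\subset G$. With the hypothesis as literally stated the proposition is simply false: if $H$ is any semi-stable $p$-divisible group with $(\dim H,\htt\, H)=1$ and $N>h$, then $G=H[p^N]$ is a semi-stable finite flat subgroup with $H[p]\subsetneq G$ and $\mu(G)=d/h\notin\{d'/h':h'\leq h-1\}$, yet $G\not\subset H[p^h]$.

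Your argument breaks precisely where you sense it might: once $G[p]=H[p]$ has height $h$, the integrality $\deg G[p]\in\N$ (which here is automatic, since $H[p]$ is already $\BT_1$) forces only $\mu(G)=a/h$ for some integer $a$, and the exclusion set $\{d'/h':h'\leq h-1\}$ does \emph{not} rule this out---there is no ``sharp form'' of the hypothesis that does. The corrected hypothesis $H[p]\not\subset G$ gives instead $G[p]\subsetneq H[p]$, so $\htt\, G[p]\leq h-1$; then if $pG[p^2]=G[p]$ one obtains $\deg G[p]\in\N$ via the $\BT_2$ argument, whence $\mu(G)=\deg G[p]/\htt\, G[p]$ lands squarely in the excluded set. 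That is the contradiction, and from there the recursion you describe (pass to $H/G[p]$ and $G/G[p]$, which again satisfies $H'[p]\not\subset G'$) goes through exactly as in Proposition~\ref{prop:astuce BT tronque 1}. The paper's one-line proof is indicating precisely this: the argument of the previous proposition carries over verbatim, the combined hypotheses ``$H$ semi-stable, $(\dim H,h)=1$, $\mu(G)=\mu(H)$'' being replaced by the single slope-exclusion on $\mu(G)$.
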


Notons également le résultat suivant conséquence des propositions \ref{exisutnidupsemistabe35129643} et \ref{prop:destabilisation}.

\begin{prop}
Supposons $H$ de type HN et soit $f:H\drt H'$ une isogénie qui n'est pas un isomorphisme.
Sont équivalents:
\begin{enumerate}
\item $H'$ est de type HN
\item les pentes de $\HN (\ker f)$ sont contenues dans les pentes de $\HN(H)$. 
\end{enumerate}
\end{prop}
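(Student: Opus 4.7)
The plan is to treat the two implications separately.

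For $(1) \Rightarrow (2)$, since $H$ and $H'$ are both of HN type, Proposition \ref{exisutnidupsemistabe35129643} applies to the isogeny $f$: it yields the equality $r = r'$ and isogenies $f_i : H_i \to H'_i$ and $\bar f_i : H_i/H_{i-1} \to H'_i/H'_{i-1}$ between the corresponding HN pieces, both semi-stable of slope $\mu_i$. Since each $f_{i-1}$ is a surjective isogeny in $\BT_{\O_K}$, the snake lemma gives short exact sequences
$$0 \to \ker f_{i-1} \to \ker f_i \to \ker \bar f_i \to 0,$$
exhibiting $G := \ker f$ as a successive extension of the $\ker \bar f_i$. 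By Proposition \ref{prop:destabilisation}, each $\ker \bar f_i$ is either zero or semi-stable of slope $\mu_i$, and discarding the zero pieces yields the HN filtration of $G$. Its slopes therefore lie in $\{\mu_1,\dots,\mu_r\}$, the set of slopes of $\HN(H)$.

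For $(2) \Rightarrow (1)$, set $H'_i := (H_i + G)/G \simeq H_i/(H_i \cap G)$, a $p$-divisible subgroup of $H'$. A diagram chase yields
$$H'_i/H'_{i-1} \;\simeq\; (H_i/H_{i-1})/K_i, \qquad K_i := (G \cap H_i)/(G \cap H_{i-1}),$$
where $K_i$ is a finite flat subgroup of the semi-stable $p$-divisible group $H_i/H_{i-1}$ of slope $\mu_i$. By Proposition \ref{prop:destabilisation}, $(H_i/H_{i-1})/K_i$ is semi-stable of slope $\mu_i$ precisely when $K_i = 0$ or $K_i$ is itself semi-stable of slope $\mu_i$. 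Granting this for every $i$, the filtration $(H'_i)$, after removing the zero graded pieces, realises $H'$ as an HN-type group.

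The crux is thus the key lemma: under hypothesis (2), each $K_i$ is zero or semi-stable of slope $\mu_i$, or equivalently, the filtration $(G \cap H_i)$ of $G$ agrees (up to repetition) with its HN filtration. For one inclusion, if $(G'_j)_{0 \le j \le k}$ denotes the HN filtration of $G$ with slopes $\lambda_j = \mu_{\sigma(j)}$ for some increasing $\sigma : \{1,\dots,k\} \hookrightarrow \{1,\dots,r\}$, a Hom-vanishing argument (any morphism from an object all of whose slopes are $\ge \lambda_j$ into $H/H_{\sigma(j)}$, whose finite flat subgroups have slopes $\le \mu_{\sigma(j)+1} < \lambda_j$, must be zero) shows $G'_j \subset H_{\sigma(j)}$, and hence $G'_{j(i)} \subset G \cap H_i$ for $j(i) := |\{j : \sigma(j) \le i\}|$. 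The reverse inclusion $G \cap H_i \subset G'_{j(i)}$ rests on the concatenation inequality of Proposition \ref{DKGSISG92447EGsokotriplosmp},
$$\HN(G) \;\le\; \HN(K_1) \circledast \cdots \circledast \HN(K_r),$$
combined with the equal endpoints $(\htt G, \deg G)$, the bound $\mu_{\max}(K_i) \le \mu_i$ (coming from $K_i \hookrightarrow H_i/H_{i-1}$), and the discreteness hypothesis that slopes of $\HN(G)$ lie in $\{\mu_1 > \dots > \mu_r\}$. A slope-by-slope comparison, working from the top slope $\mu_1$ downward and tracking the multiplicities of each $\mu_j$ on both sides, then forces every $\HN(K_i)$ to be a line of slope $\mu_i$ (or the zero polygon). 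Converting this polygon inequality into such a sharp structural statement is the main technical step of the proof.
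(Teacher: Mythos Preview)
Your argument for $(1)\Rightarrow(2)$ is correct and is precisely what the paper has in mind: Proposition~\ref{exisutnidupsemistabe35129643} makes $f$ respect the two HN filtrations, the snake lemma gives the filtration of $\ker f$ by the $\ker f_i$, and Proposition~\ref{prop:destabilisation} identifies each nonzero graded piece as semi-stable of slope $\mu_i$.

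For $(2)\Rightarrow(1)$ there is a gap at the step ``$\mu_{\max}(K_i)\le\mu_i$, coming from $K_i\hookrightarrow H_i/H_{i-1}$''. The natural map
\[
K_i=(G\cap H_i)/(G\cap H_{i-1})\longrightarrow H_i/H_{i-1}
\]
is only a monomorphism on generic fibres: its scheme-theoretic kernel is $\big((G\cap H_i)\cap H_{i-1}\big)/(G\cap H_{i-1})$, a finite group scheme with trivial generic fibre but no reason to be flat, hence no reason to vanish. For a morphism $u\colon A\to B$ of finite flat groups that is a generic isomorphism one has $\deg A\ge\deg B$ (this is the inequality underlying the proof of Proposition~\ref{DKGSISG92447EGsokotriplosmp}), so passing to the schematic image of $K_i$ in $H_i/H_{i-1}$ gives, for a flat sub $L\subset K_i$, only $\mu(L)\ge\mu(\overline L)\le\mu_i$, which is the wrong direction to bound $\mu_{\max}(K_i)$ from above. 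Your concluding polygon comparison therefore lacks one of its two inputs.

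A cleaner organisation is to factor $f$ through the HN filtration of $G$ and so reduce to the case where $G$ is semi-stable of a single slope $\mu_j$. Your Hom-vanishing then gives $G\subset H_j$ as a genuine closed flat subgroup (the composite $G\to H/H_j$ is generically zero, hence zero since $G$ is $\O_K$-flat), and the exact sequence $0\to H_j/G\to H/G\to H/H_j\to 0$ in $\BT_{\O_K}$, together with isogeny invariance of $\HN$, reduces to $H_j$; induction on $r$ disposes of $j<r$. The boundary case $j=r$ (i.e.\ $G$ of the smallest slope) still requires showing that $G$ embeds integrally into $H/H_{r-1}$, and this is exactly where the flatness issue above resurfaces. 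The paper itself only writes ``cons\'equence des propositions~\ref{exisutnidupsemistabe35129643} et~\ref{prop:destabilisation}'' and gives no further detail, so this residual case genuinely needs to be argued.
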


\subsection{Le théorème de réduction}

Dans cette section {\it $K$ est quelconque}. Voici une des principales applications géométriques des résultats précédents. 

\begin{theo}\label{theo:theo principal ou pas}
Soit $H\in \BT_{\O_K}$  tel que $(\dim H,\htt \ H)=1$. 
\begin{enumerate}
\item Si $\HN (H)$ est une droite alors $H$ est isogène à un groupe $p$-divisible semi-stable.
\item Si la fibre spéciale de $H$ est isocline alors $H$ est isogène à un groupe $p$-divisible semi-stable.
\end{enumerate}
\end{theo}
\begin{proof}
D'après le théorème \ref{theo:inegalite Newton et HN} le point (1) implique le point (2). 
On va montrer que l'algorithme de descente de la section \ref{KDSalgoGisdgETP348hhdidncbdsyg} s'arrête en temps fini. On peut alors supposer que $K=C$ est algébriquement clos. D'après le théorème 5.1.4 de \cite{ScholzeWeinstein} on peut supposer que $H$ est donné par un $\O_C$-point de l'espace de Rapoport-Zink $\M$ sur $\spf ( \breve{\Z}_p)$ des déformations par isogénies d'un groupe $p$-divisible  sur $\Fpb$. On note $\M_\eta$ sa fibre générique comme espace de Berkovich.
Pour simplifier les notations on note encore $H$ pour le groupe $p$-divisible universel sur $\M$, le groupe dont on est parti étant maintenant une spécialisation en un $\O_C$-point de $H$.
 Le lieu de semi-stabilité de $H$ est un domaine analytique fermé 
$$
\M_\eta^{ss} \subset \M_\eta.
$$
Soit 
$$
U=\{ x\in \M_\eta \ |\ \HN (H^{\mathscr{u}}_x)\text{ est une droite }\},
$$
un ouvert de $\M_\eta$ (prop. \ref{DSQDFKqsfuetD35xezmicontienutzskdfuz} et point (1) du théo. \ref{theo:poly renormalise est un poly}) qui contient $\M_\eta^{ss}$.
L'algorithme de descente de la section \ref{KDSalgoGisdgETP348hhdidncbdsyg} définit une application 
$$
T:|U|\drt |U|
$$
définie par
\begin{itemize}
\item $T(x)=x$ si $H^{\mathrm{u}}_x$ est semi-stable,
\item sinon, $T(x)$ est donné par l'isogénie $H_x\drt H_x /G$, où $G$ est le plus grand sous-schéma en groupes plat fini de $H_x$ semi-stable de pente $\mu_{\mathrm{max}} (H_x)$.
\end{itemize}
L'algorithme de descente le long de l'orbite de Hecke de $x$ est alors donné par $(T^{n}(x))_{n\geq 0}$ et il s'agit de montrer que pour $n\gg 0$, $T^{n} (x)$ est un point fixe de $T$.

Pour cela on globalise la situation. D'après le théorème d'uniformisation de Rapoport-Zink (\cite{RZ} théo. 6.23), il existe un sous-groupe discret  $\Gamma\subset J(\Qp)$ tel que 
$$
\GG\bc \M\hookrightarrow \mathscr{S}_\eta,
$$
où $\mathscr{S}$ est le complété $p$-adique d'un modèle entier d'une variété de Shimura de type PEL non-ramifiée sur $\breve{\Z}_p$. On dispose de correspondances de Hecke sur $\mathscr{S}_\eta$ et on se ramène ainsi à démontrer notre théorème pour $\mathscr{S}_\eta$, l'avantage étant que $|\mathscr{S}_\eta|$ est compact. Si $\mathscr{A}$ est le schéma abélien universel sur $\mathscr{S}$ on note encore $H=\mathscr{A}[p^\infty]$.
Soit donc $\mathscr{S}_\eta^{ss}\subset \mathscr{S}_\eta$ le lieu de semi-stabilité des points de $H[p]$, un domaine analytique compact. 
On note encore $U\subset \mathscr{S}_\eta$ l'ouvert où le polygone de Harder-Narasimhan renormalisé est une droite,
$$
\mathscr{S}_\eta^{ss}\subset U \subset \mathscr{S}_\eta.
$$
Notons 
$$
\mu_{\mathrm{max}}: |U|\ldrt [d/h,1],
$$
$d=\dim H$, $h=\htt \, H$, 
la fonctions continue donnée par la plus grande pente du polygone de Harder-Narasimhan des points de $p$-torsion. On a donc 
$$
|\mathscr{S}_\eta^{ss}| =\mu_{\mathrm{max}}^{-1} (d/h).
$$
Il y a une action par correspondances des correspondances de Hecke en $p$ sur $\mathscr{S}_\eta$. Elles sont en bijection avec
$$
\GL_h (\Zp) \bc \GL_h (\Qp)/ \GL_h (\Zp).\Qp^\times 
$$
(le facteur $\Qp^\times$ agit trivialement sur l'espace, pas sur les fibrés automorphes dont nous n'avons pas besoin ici).
On identifie cet ensemble de correspondances à $(\Z^h)^+/\Z$, les suites décroissantes de $\Z^h$ modulo translations. Pour $\underline{a}=(a_1,\dots,a_h)\in (\Z^h)^+$ avec $a_1\geq \cdots \geq a_h$ on note ${Hecke}_{\underline{a}}$ la correspondance de Hecke associée. D'après la proposition \ref{prop:astuce BT tronque 1}
$$
Y:= \bigcup_{2h\geq a_1\geq \cdots \geq a_h\geq 0 \atop a_1-a_h>h} Hecke_{\underline{a}} (\mathscr{S}_\eta^{ss})
$$
vérifie 
$$
Y\cap \mathscr{S}_\eta^{ss} =\emptyset.
$$
Par compacité de $Y$, 
$$
\a:= \inf_Y \mu_{\mathrm{max}}>\frac{d}{h}.
$$
Notons maintenant 
$$
\l:= \inf \{\a\} \cup \Big \{ \frac{d'}{h'}\ \Big |\ d',h'\in \N, \ 1\leq d'\leq h'\leq h-1, \frac{d'}{h'}>\frac{d}{h}\Big \}.
$$
On a $\l>\frac{d}{h}$. Soit $x\in U$ tel que $$\mu_{\mathrm{max}} (H_x)<\l.$$ 
Supposons que $K(x)$ est de valuation discrète et donc (prop. \ref{SDKGDSHGIERTPO3268SRsdgsee}) $T^n(x)\in \mathscr{S}_\eta^{ss}$ pour $n\gg 0$. 
Soit $n(x)$ le plus petit entier tel que 
$$
T^{n(x)} (x) \in \mathscr{S}_\eta^{ss}.
$$
Il y a une suite d'isogénies 
$$
H_x\drt H_{T(x)}\drt \cdots \drt H_{T^{n(x)}(x)}
$$
(il faudrait noter les extensions des scalaires puisque $K(T^{n(x)} (x))\subset\cdots \subset K(T(x))\subset K(x)$ et ces groupes ne sont pas définis sur les mêmes bases, mais afin d'alléger les notations on ne le fait pas).
On va montrer que 
$$
\ker \big ( H_x\drt H_{T^{n(x)}} \big ) \subset H_x [p^h].
$$
Supposons par l'absurde que ça ne soit pas le cas et soit $m\geq 0$ le plus grand entier tel que 
$$
I:=\ker \big ( H_{T^m(x)}\drt H_{T^{n(x)}(x)} \big ) \not\subset H_{T^m(x)}[p^h].
$$
D'après la proposition \ref{prop:desta clef deux suite retour} on a $0\leq m <n(x)-1$ et 
\begin{eqnarray*}
H_{T^m(x)}[p] &\not\subset  &I \\
I & \not \subset & H_{T^m(x)}[p^h] \\
I & \subset & H_{T^m(x)}[p^{2h}].
\end{eqnarray*} 
Soit $k$ le plus petit entier tel que 
$$
I\subset H_{T^m(x)}[p^k].
$$
On a donc $h<k\leq 2h$. Il y a alors une isogénie 
$$
H_{T^{n(x)}(x)}\simeq H_{T^m(x)}/I \xrig{\ \times p^k \ } H_{T^{m}(x)}.
$$
On vérifie alors que le noyau de cette isogénie est contenu dans $H_{T^{n(x)}(x)}[p^h]$ et ne contient pas $H_{T^{n(x)}(x)}[p]$. Cela implique que $T^{n(x)}(x)\in Y$ ce qui est impossible puisque $\mu_{\mathrm{max}} (T^{m}(x))<\a$.

Soit maintenant $\l'$ vérifiant $\frac{d}{h}<\l'<\l$. Considérons le domaine analytique compact
$$
K_{\l'}=\{ \mu_{\mathrm{max}} \leq \l'\} \subset \mathscr{S}_\eta.
$$
Puisque pour $x\in \mathscr{S}_\eta$, $HN (H_x)\leq HN (H_x[p])$ et $\HN(H_x)$ est un polygone à points de rupture de coordonnées entières, on a grâce à la définition de $\l$
$$
K_{\l'}\subset U.
$$
Considérons le domaine analytique compact 
$$
Z= \bigcup_{h\geq a_1\geq \cdots \geq a_h\geq 0 } Hecke_{\underline{a}} (\mathscr{S}_\eta^{ss}).
$$
On a montré que \og les points classiques de Tate\fg{} de $K_{\l'}$ sont contenus dans ceux de $Z$. On a donc, par densité des points classiques, $K_{\l'}\subset Z$. Ainsi, si $x\in \mathscr{S}_\eta$ vérifie $\mu_{\mathrm{max}} (H_x)<\l$ alors $H_x$ est isogène à un groupe $p$-divisible semi-stable.
\\
Soit maintenant $x\in U$ quelconque. D'après le point (2) du corollaire \ref{QDGFKIQDFDidgvoezrf249sfsf}, pour $n\gg 0$, $\mu_{\mathrm{max}} (H_{T^n (x)})<\l$. Cela permet de conclure.
\end{proof}

\begin{rema}
Au final, le point qui permet de conclure dans le théorème précédent est que, si $(\dim H,\htt \, H)=1$ et $H$ est semi-stable, alors $H[p]$ est stable. Cela est à rapprocher avec le fait que si $(d,r)=1$, alors tout fibré semi-stable de degré $d$ et de rang $r$ sur une surface de Riemann compacte est stable, la structure de l'espace de modules correspondant étant alors beaucoup plus simple (l'espace de modules des fibrés semi-stables est alors propre et lisse).
\end{rema}

Cela nous permet de conclure quant au théorème principal.

\begin{theo}\label{theo:vrai theo principal OK}
Soit $\M$ l'espace de Rapoport-Zink des déformations par quasi-isogénies d'un groupe $p$-divisible simple à isogénie près sur $\Fpb$ (\cite{RZ},\cite{RapoportViehmann}). Notons $\M_\eta$ sa fibre générique comme espace de Berkovich et  $$\pi_{dR}:\M_\eta\drt \F$$ l'application des périodes de Hodge-de-Rham, d'image le domaine de périodes $\F^a$. Soit $\M_\eta^{ss}$ le lieu où les points de $p$-torsion de la déformation universelle est un groupe plat fini semi-stable, un domaine analytique fermé. Alors, 
\begin{enumerate}
\item 
$
\pi_{dR} ( \M_\eta^{ss}) = \F^a.
$
En d'autres termes $Hecke.\M_\eta^{ss} = \M_\eta$,
\item le morphisme $\pi_{dR |\M_\eta^{ss}/p^\Z}$ est quasi-fini,
\item si $\M_{\eta}^{s}$ désigne le lieu stable, un ouvert de $\M_\eta$, alors 
$
\pi_{dR |\M_\eta^{s}/p^\Z}: \M_\eta^{s}/p^\Z\hookrightarrow \F^{a}.
$
\end{enumerate}
\end{theo}
\begin{proof}
L'assertion (1) est une conséquence du théorème \ref{theo:theo principal ou pas}. L'assertion (2) se déduit des propositions \ref{prop:destabilisation} et \ref{prop:astuce BT tronque 1}. 
Le point (3) se déduit du fait que $\pi_{dR | \M_\eta^s/p^\Z}$ est étale injectif au niveau des points géométriques.
\end{proof}

\section{Stratification de HN des Grassmaniennes et des variétés de Shimura}
\label{sec:stratification de HN des grassmaniennes}

\subsection{Cadre et définitions}

Soit $G=\GL_n$ sur $\Zp$. On note $\mu$ le cocaractère de $G$ défini par $\mu (z)=\mathrm{diag} (\underbrace{z,\cdots,z}_{d\text{-fois}},  1,\cdots, 1)$. On note $\mathrm{Perf}_{\Fp}$ la catégorie 
des $\Fp$-espaces perfectoïdes munie de la $v$-topologie. On utilise  la théorie des diamants de Scholze (\cite{ScholzeBerkeley}, \cite{ScholzeCohomologyDiamonds}). On va en effet définir certaines stratifications des Grassmaniennes $p$-adiques, dont les strates ne sont pas des espaces rigides usuels mais des diamants (on pourrait également les voir comme des espaces pseudo-adiques mais il est plus naturel d'utiliser la théorie des diamants de Scholze).

\begin{defi}
On note $\HT_{G,\mu}$ le $v$-champ sur $\mathrm{Perf}_{\Fp}$ qui à $S$ associe le groupoïde des quadruplets
$$
(S^\sharp,\F,\E,\a),
$$
où $S^\sharp$ est un débasculement de $S$, 
$\F$ est un $\Zp$-faisceau pro-étale localement constant libre de rang $n$ sur $S$, $\E$ un $\O_{S^\sharp}$-module localement libre de rang $n-d$ et $\a:\F\drt \E$ un morphisme $\Zp$-linéaire tel que $\F\otimes_{\underline{\Zp}} \O_{S^\sharp}\drt \E$ soit surjectif. 
\end{defi}

Dans la définition précédente on a pris quelques libertés quant aux notations. Plus précisément, si $\nu: S_{\text{pro{\'e}t}}\drt |S|$ est la projection du site pro-étale sur le site analytique, $\E$ est un faisceau sur $|S|$ et $\a:\F\drt \nu^*\E$. Afin d'alléger les notations, on utilise ce type de raccourcis dans la suite.

Si $\Gr_{n,n-d}$ désigne la grassmannienne des quotients localement libres de rang $n-d$ de $\O^n$ comme $\Qp$-espace adique, on a 
$$
\HT_{G,\mu} = \big [ \underline{\GL_n(\Zp)} \bc \Gr_{n,n-d}^\diamond\big ].
$$
Il s'agit donc d'un petit $v$-champ au sens de \cite{ScholzeCohomologyDiamonds}.

\begin{rema}
Plus généralement, supposons nous donné un groupe réductif $G$ sur $\Qp$ et une classe de conjugaison de cocaractère $\mu:\mathbb{G}_{m\overline{\Q}_p}\drt G_{\overline{\Q}_p}$. Soit $E$ le corps reflex de définition de $\{\mu\}$. Supposons fixé un sous-groupe compact ouvert $K\subset G(\Qp)$. On peut alors considérer le champ sur $\spa (E)^\diamond$ 
$$
\HT_{G,\mu,K} = \big [ \, \underline{K}\bc \mathrm{Gr}^{\leqslant \mu}_{B_{dR}} \,\big ],
$$
où $\mathrm{Gr}^{\leq \mu}_{B_{dR}}$ désigne la cellule de Schubert fermée associée à $\mu$ dans la $B_{dR}$-grassmannienne affine de Scholze (\cite{ScholzeBerkeley}). Nous nous restreignons au cas précédent du groupe linéaire et de $\mu$ minuscule pour une première approche concrète.
\end{rema}

Ce champ est muni de correspondances de Hecke. Pour chaque élément de $\GL_n (\Zp)\bc \GL_n (\Qp)/\GL_n (\Zp)$, une correspondance entre $\HT_{G,\mu}$ est lui-même formée des uplets
$(S^\sharp,\F,\E,\a,\F',\E',\a')$ et d'un isomorphisme $\F\unp \iso \F'\unp$,  tel que la position relative de $\F$ et $\F'$ soit donnée par notre double classe fixée.

\subsection{Stratification de Newton}

Il y a un morphisme de $v$-champs 
$$
\HT_{G,\mu}\ldrt \mathrm{Bun}_G,
$$
où $\mathrm{Bun}_G$ désigne le champ des fibrés de rang $n$ sur la courbe (\cite{GeometrizationReview}, \cite{geometrisation}). Celui-ci associe à $(S^\sharp,\F,\E,\a)$ le fibré $\G$ sur $X_S$, obtenu en modifiant le  fibré semi-stable de pente $0$, $\F\otimes_{\underline{\Zp}} \O_{X_S}$,
$$
0\ldrt \F\otimes_{\underline{\Zp}}\O_{X_S}\ldrt \G \ldrt i_* \ker (\a\otimes 1)(D)\ldrt 0
$$
où $i:S^\sharp \hookrightarrow X_S$ est défini par le débasculement $S^\sharp$ de $S$, $D$ est le diviseur de Cartier effectif associé à $i$ et
$\a\otimes 1: \F\otimes_{\underline{\Zp}} \O_{S^\sharp}\twoheadrightarrow \E$. Cette modification est obtenue par tiré en arrière de la modification
$$
0\ldrt \O_{X_S}(-D)\ldrt\O_{X_S}\ldrt i_* \O_{S^\sharp}\ldrt  0
$$
tordue par $\O(D)$ et tensorisée par application de $\F\otimes_{\underline{\Zp}}-$.
\\

 On peut dès lors tirer en arrière la stratification de Harder-Narasimhan de $\mathrm{Bun}_G$ et définir une {\it stratification de Newton de $\HT_{G,\mu}$}. Rappelons avant cela qu'à un élément $\nu=(\nu_1,\cdots,\nu_n)\in (\Q^n)^+$, on associe un polygone concave d'origine $(0,0)$ sur l'intervalle $[0,n]$ et de pente $\nu_i$ sur l'intervalle $[i-1,i]$.  Par définition, $\nu\leq \nu'$ si le polygone associé à $\nu$ est en dessous de celui associé à $\nu'$ et tous deux ont même points terminaux. Dans la suite, on va se restreindre aux $\nu$ tels que le polygone associé soit à points de rupture de coordonnées entières et de point terminal $(n,d)$.

\begin{defi}
Pour $\nu\in (\Q^n)^+$, 
on note $\HT_{G,\mu}^{\mathrm{Newt}^\diamond=\nu}$, resp. $\HT_{G,\mu}^{\mathrm{Newt}^\diamond\leqslant \nu}$, le sous-champ dont les $(C,C^+)$-points, $C$ algébriquement clos, sont formés des éléments de $\HT_{G,\mu} (C,C^+)$ tels que, si $\G$ est le fibré associé par modification et $\G\simeq \E(D,p^{-1}\ph)$, alors $\Newt (D,\ph)^{\diamond}=\nu$, resp. $\Newt (D,\ph)^{\diamond}\leq \nu$.
\end{defi}

Dit d'une autre façon, si $\HN (\G)$ est donné par $(\l_1,\cdots,\l_n)\in (\Q^n)^+$, 
$\Newt (D,\ph)^\diamond = (1-\l_n,\cdots,1-\l_1)$. 
Pour tout $\nu$, $\HT^{\Newt^\diamond\leqslant \nu}_{G,\mu}$ est un sous-champ ouvert partiellement propre de $\HT_{G,\mu}$, qui correspond à un ouvert $\GL_n (\Qp)$-invariant dans la grassmannienne $\mathrm{Gr}_{n,n-d}$. La stratification correspondante de cette grassmannienne est celle de Caraiani et Scholze (\cite{CaraianiScholze}).

\subsection{Stratification de Harder-Narasimhan}

\begin{defi}
On note $\HT_{G,\mu}^{\HN= \nu}$, resp. $\HT_{G,\mu}^{\HN\leqslant\nu}$,  le sous-champ dont les points sont formés des modules de Hodge-Tate rationnels (déf. \ref{defi:module de Hodge Tate}) de polygone de Harder-Narasimhan égal à $\nu$, resp. inférieur ou égal à $\nu$.
\end{defi}

On a le résultat de semi-continuité suivant.

\begin{prop}
Pour tout $\nu$, $\HT_{G,\mu}^{\HN \leqslant \nu}$ est un sous-champ ouvert partiellement propre dans $\HT_{G,\mu}$.
\end{prop}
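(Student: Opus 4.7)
The plan is to reduce to the Grassmannian presentation $\HT_{G,\mu}=[\underline{\GL_n(\Zp)}\backslash \Gr^\diamond_{n,n-d}]$. Since the HN polygon of a Hodge-Tate module depends only on the underlying rational datum in $\M^{\HT}_{\Qp}$, and the $\GL_n(\Zp)$-action changes only the integral lattice, the substack $\HT^{\HN\leq\nu}_{G,\mu}$ descends from a $\GL_n(\Zp)$-invariant locus $U_\nu\subset\Gr^\diamond_{n,n-d}$. It therefore suffices to show that $U_\nu$ is an open partially proper subdiamond of $\Gr^\diamond_{n,n-d}$.

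To describe $U_\nu$ concretely, note that a $(C,C^+)$-point of $\Gr^\diamond_{n,n-d}$ is a $d$-dimensional $C$-subspace $K\subset C^n$, and the associated Hodge-Tate module is $(\Qp^n,C^n/K,\a)$. A $\Qp$-subspace $V'\subset\Qp^n$ of dimension $r$ defines a subobject of degree $\dim_C(V'\otimes_{\Qp}C\cap K)$. By Theorem \ref{theo:poly renormalise est un poly}, $\HN(x)$ is concave with integer breakpoints and endpoint $(n,d)$, so the condition $\HN(x)\leq\nu$ is equivalent to the finite list of conditions, indexed by integers $r\in\{1,\dots,n-1\}$, that no $\Qp$-subspace of dimension $r$ has $\dim_C(V'\otimes C\cap K)>\nu(r)$.

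For openness I would encode each such condition via an incidence diamond. Let $\mathscr{G}_r$ denote the diamond of the rigid analytic Grassmannian of $r$-dimensional $\Qp$-subspaces of $\Qp^n$, which is proper over $\spa(\Qp)^\diamond$. On the product $\mathscr{G}_r\times\Gr^\diamond_{n,n-d}$, the Schubert-type incidence locus
\[
\mathcal{I}_{r,c}=\{(V',K)\,:\,\dim_C(V'\otimes C\cap K)\geq c+1\}
\]
is a closed subdiamond, cut out by a determinantal rank condition on the universal map $V'\otimes\O\oplus K\to\O^n$. By properness of $\mathscr{G}_r$, the projection $\mathscr{G}_r\times\Gr^\diamond_{n,n-d}\to\Gr^\diamond_{n,n-d}$ carries $\mathcal{I}_{r,\nu(r)}$ to a closed subdiamond, which is exactly $\{x\,:\,\HN(x)(r)>\nu(r)\}$. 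Taking the complement of the finite union of these closed sets over the integer breakpoints $(r,\nu(r))$ of $\nu$ produces $U_\nu$ as an open subdiamond.

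Partial properness follows via the valuative criterion: the HN polygon at a geometric point is determined by the classifying map to $\Gr^\diamond_{n,n-d}$ and is insensitive to the choice of open bounded valuation subring of the residue field. Since $\Gr^\diamond_{n,n-d}$ is itself partially proper over $\spa(\Qp)^\diamond$, a test map $\spa(K,K^+)\to\Gr^\diamond_{n,n-d}$ whose generic point lies in $U_\nu$ stays in $U_\nu$. The main obstacle in this plan is to make rigorous in the diamond formalism that the Schubert-type condition defining $\mathcal{I}_{r,c}$ is genuinely closed and that the proper projection $\mathscr{G}_r\times X\to X$ sends closed subdiamonds to closed subdiamonds; both are routine applications of $v$-sheaf rank conditions and of the properness of $\mathscr{G}_r$ inherited from the $\Qp$-rigid Grassmannian, but they require careful bookkeeping in the diamond framework.
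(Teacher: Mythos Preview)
Your approach is essentially the paper's, repackaged. The paper fixes the standard subspace $\Qp^i\oplus(0)\subset\Qp^n$, notes that the locus $Z_i$ where this subspace violates the bound $\nu(i)$ is Zariski closed in $\Gr_{n,n-d}$, and then observes that $F=\bigcup_i\GL_n(\Qp)\cdot Z_i$ is closed because $\GL_n(\Qp)/\mathrm{Stab}(Z_i)$ is compact (the stabilizer contains a parabolic). Your incidence-variety argument is the same idea: the properness of your parameter space $\mathscr{G}_r$ of $\Qp$-subspaces is exactly the compactness of the profinite set $\Gr(r,n)(\Qp)\simeq\GL_n(\Qp)/P_r(\Qp)$, and projecting the closed incidence locus along a proper map is the same as taking the $\GL_n(\Qp)$-orbit of $Z_i$.

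Two small points to clean up. First, the phrase ``rigid analytic Grassmannian of $r$-dimensional $\Qp$-subspaces of $\Qp^n$'' is misleading: the analytification of the Grassmannian scheme parametrizes $C$-subspaces, not $\Qp$-subspaces. What you want (and what makes the properness trivial) is simply the profinite set $\underline{\Gr(r,n)(\Qp)}$ regarded as a $v$-sheaf. Second, you first correctly say the conditions are indexed by all integers $r\in\{1,\dots,n-1\}$, but then conclude by taking the union ``over the integer breakpoints $(r,\nu(r))$ of $\nu$''. That is not enough: if $\nu$ is a straight line it has no interior breakpoints, yet $\HN(x)$ can still exceed it at some interior integer. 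You must take the union over all $r\in\{1,\dots,n-1\}$, as in the paper. With these two corrections your argument is complete and matches the paper's.
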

\begin{proof}
Notons $\mathcal{P}:[0,n]\drt [0,d]$ le polygone concave associé à $\nu$. Pour $i\in\{1,\cdots,n-1\}$, soit 
$$
Z_i=\big  \{ x\in \mathrm{Gr}_{n,n-d}\ |\ \dim \big ( \mathrm{Im} ( K(x)^i\oplus (0)^{n-i}\drt K(x)^{n-d} ) \big )\leq n-\mathcal{P}(i)\big \},
$$
un fermé Zariski dans $\mathrm{Gr}_{n,n-d}$. Alors, 
$$
F= \bigcup_{i=1}^{n-1} \GL_n (\Qp).Z_i
$$
est fermé dans $|\mathrm{Gr}_{n,n-d}|$ par compacité de $\GL_n (\Qp)/\mathrm{Stab}_{\mathrm{GL}_n (\Qp)} (Z_i)$. On a alors 
\begin{equation*}
\HT_{G,\mu}^{\HN \leqslant \nu} = \big [ \, \underline{\GL_n (\Zp)}\,\bc\, \big ( \mathrm{Gr}_{n,n-d}^\diamond \smallsetminus F\big )\,\big ].\qedhere
\end{equation*}
\end{proof}

Les strates de Newton et de Harder-Narasimhan se comparent alors de la façon suivante. On note $\nu_{ss}$ le polygone qui est une droite de pente $d/n$.

\begin{prop}
Pour tout $\nu$, on a $$\HT_{G,\mu}^{\Newt^\diamond=\nu}\subset \HT_{G,\mu}^{\HN \leqslant \nu}.$$ En particulier, la strate ouverte \og basique\fg{} $\HT_{G,\mu}^{\Newt^\diamond=\nu_{ss}}$ est contenue dans la strate ouverte \og semi-stable\fg{} 
$\HT_{G,\mu}^{\HN=\nu_{ss}}$.
\end{prop}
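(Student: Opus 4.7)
Le plan est de v\'erifier la premi\`ere inclusion au niveau des points g\'eom\'etriques $(C,C^+)$ avec $C$ alg\'ebriquement clos, puisque les deux sous-champs sont caract\'eris\'es par des conditions sur ces points via la pr\'esentation $\HT_{G,\mu} = [\underline{\GL_n (\Zp)}\bc \Gr_{n,n-d}^\diamond]$. Fixons un tel point $(C^\sharp,\F,\E,\a)\in \HT_{G,\mu}(C,C^+)$. Cette donn\'ee fournit d'une part un module de Hodge-Tate rationnel $Y=(\F\otimes_{\Zp}\Qp,\E_{|C^\sharp},\a)\in \M^{\HT}_{\Qp}$, dont le polygone de Harder-Narasimhan contr\^ole l'appartenance \`a $\HT_{G,\mu}^{\HN\leqslant \nu}$, et d'autre part, via le morphisme $\HT_{G,\mu}\to \mathrm{Bun}_G$ rappel\'e au d\'ebut de la section, la modification admissible $(\F\otimes_{\Zp}\O,\G,u)\in \mathrm{Modif}^{\mathrm{ad},\geq 0}$ de la courbe de Fargues-Fontaine avec $\G\simeq \E(D,p^{-1}\ph)$, dont le polygone de Newton $\Newt(D,\ph)^\diamond$ contr\^ole l'appartenance \`a $\HT_{G,\mu}^{\Newt^\diamond=\nu}$.

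L'observation cl\'e est que les deux constructions sont reli\'ees \`a travers le foncteur pleinement fid\`ele $\M^{\HT}_{\Qp}\hookrightarrow \mathrm{Modif}^{\mathrm{ad},\geq 0}$ introduit juste avant le Th\'eor\`eme \ref{theo:inegalite Newton et HN}: le module de Hodge-Tate rationnel $Y$ s'envoie pr\'ecis\'ement sur la modification minuscule $(\F\otimes \O,\G,u)$, et les fonctions pentes se correspondent. En particulier la filtration de Harder-Narasimhan de $Y$ dans $\M^{\HT}_{\Qp}$ co\"incide avec celle de $(\F\otimes \O,\G,u)$ dans $\mathrm{Modif}^{\mathrm{ad},\geq 0}$, et la Proposition \ref{prop:inegalite poly HN de modif et Newton} donne
$$
\HN (Y) \;=\; \HN (\F\otimes \O,\G,u) \;\leqslant\; \HN (\G).
$$
La correspondance $\HN(\G)\leftrightarrow \Newt(D,\ph)^\diamond$ enregistr\'ee par la formule $(\l_1,\ldots,\l_n)\mapsto (1-\l_n,\ldots,1-\l_1)$ juste apr\`es ce th\'eor\`eme permet alors de traduire cette in\'egalit\'e en $\HN(Y)\leqslant \nu$ d\`es que $\Newt(D,\ph)^\diamond=\nu$, d'o\`u l'inclusion ponctuelle et donc celle des sous-champs.

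Pour le cas basique, $\nu_{ss}$ est une droite, c'est-\`a-dire l'unique polygone concave \emph{minimal} aux extr\'emit\'es prescrites. Si un point se trouve dans $\HT_{G,\mu}^{\Newt^\diamond=\nu_{ss}}$, l'in\'egalit\'e pr\'ec\'edente donne $\HN(Y)\leqslant \nu_{ss}$, tandis que $\HN(Y)$, \'etant lui-m\^eme un polygone concave de m\^emes extr\'emit\'es, v\'erifie automatiquement $\HN(Y)\geqslant \nu_{ss}$ par l'in\'egalit\'e de la corde. Les deux bornes ensemble forcent $\HN(Y)=\nu_{ss}$, et le point appartient donc \`a la strate ouverte semi-stable $\HT_{G,\mu}^{\HN=\nu_{ss}}$. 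La seule subtilit\'e du plan est la gestion soigneuse des conventions d'extr\'emit\'es (par exemple $(n,d)$ contre $(n,n-d)$) via la dualit\'e $\l_i\leftrightarrow 1-\l_{n+1-i}$ qui relie $\HN(\G)$ et $\Newt(D,\ph)^\diamond$; les identifications de la Section \ref{sec:Retour aux gpdiv} rendent cette gestion purement m\'ecanique.
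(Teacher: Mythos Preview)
The paper states this proposition without proof, treating it as an immediate consequence of the machinery from Section~\ref{sec:Retour aux gpdiv}: Proposition~\ref{prop:inegalite poly HN de modif et Newton} together with the identification of rational Hodge--Tate modules with minuscule admissible modifications on the curve (the functor recalled just before Theorem~\ref{theo:inegalite Newton et HN}). Your argument is exactly this implicit proof, correctly carried out pointwise on $(C,C^+)$-points, and your treatment of the basic case via the minimality of the segment $\nu_{ss}$ among concave polygons with prescribed endpoints is the right one. The convention bookkeeping you flag at the end is indeed the only place requiring care, and the paper's own normalisations (endpoint $(n,d)$ versus the duality $\l_i\leftrightarrow 1-\l_{n+1-i}$) are not entirely self-consistent; your reduction to Section~\ref{sec:Retour aux gpdiv} is the intended content.
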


\subsection{Lieu de type HN entier}

Les deux stratifications définies précédemment sont Hecke invariantes. On va maintenant définir des \og domaines fondamentaux\fg{} pour l'action des correspondances de Hecke dans chaque strate de Harder-Narasimhan. 

À chaque $x\in \Gr_{n,n-d}$, est associé un morphisme $\a_x:\Zp^n\drt K(x)^{n-d}$. On note $\omega_x$ le sous-$K(x)^0$-module engendré par l'image de ce morphisme. Cela définit un module de Hodge-Tate entier (déf. \ref{defi:module de Hodge Tate})
$$
X_x = (\Zp^n,\omega_x,\a_x) \in \M^{\HT}_{\Zp}.
$$
On munit $K(x)$ de la valuation de rang un associée à la généralisation maximale de $x$ dans notre espace adique. Comme d'habitude, on normalise cette valuation de telle manière que $v(p)=1$. Grâce à cela on définit des fonctions continues pour $n\geq 1$,
$$
x\mapsto \HN (X_x[p^n]),
$$
de $|\mathrm{Gr}_{n,n-d}|$ à valeurs dans les polygones (ces fonctions continues se factorisent en fait par l'espace topologique de Berkovich associé).

\begin{defi}
Pour un polygone $\nu$, on note $\HT_{G,\mu}^{\HN=\nu,ss}$ le sous-champ associé aux $x\in \mathrm{Gr}^{\HN=\nu}_{n,n-d}$ tels que $X_x$ soit de type HN, i.e. $\HN (X_x[p])=\nu$.
\end{defi}

Contrairement à la strate $\HN^{\HN=\nu}$ qui n'est pas quasi-compacte en général, ni associée à un ouvert admissible de la grassmannienne rigide analytique, on a le résultat suivant.

\begin{lemm}
L'espace topologique $|\HT_{G,\mu}^{\HN=\nu,ss}|$ est un fermé quasi-compact de la forme $(\GL_n (\Zp) \bc |\overline{U}|)\cap |\HT_{G,\mu}^{\HN\geq \nu}|$, avec $U$ un ouvert quasi-compact de $\mathrm{Gr}_{n,n-d}$.
\end{lemm}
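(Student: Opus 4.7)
The plan is to lift the statement to the grassmannian $\mathrm{Gr}_{n,n-d}$ via the presentation $\HT_{G,\mu}=[\underline{\GL_n(\Zp)}\bc\mathrm{Gr}_{n,n-d}^\diamond]$, identify the ss locus pointwise, and then exhibit a suitable open quasi-compact $U$ whose closure cuts out the ss stratum inside the closed Harder-Narasimhan stratum $\{\HN(X_y)\ge\nu\}$.

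First I would, via the Hodge-Tate module analogue of Proposition \ref{KdfjudsfEITfhsfroodju23459sedojour} (namely Proposition \ref{prop:sur essentiel}), rewrite the ss locus as
$$
V:=\{y\in|\mathrm{Gr}_{n,n-d}|\ :\ \HN(X_y[p])=\nu\},
$$
observing that this single equality already forces $\HN(X_y)=\nu$, via the sandwich $\HN(X_y)\le\HN(X_y[p])$ combined with the type-HN characterization. Hence $|\HT^{\HN=\nu,ss}_{G,\mu}|=\GL_n(\Zp)\bc V\subseteq\GL_n(\Zp)\bc\{\HN(X_y)\ge\nu\}=|\HT^{\HN\ge\nu}_{G,\mu}|$.

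Next I would construct $U$ by using the integrality of the renormalized polygon. By Theorem \ref{theo:poly renormalise est un poly}, $\HN(X_y)$ has integer breakpoint coordinates, so the set of polygons $\nu'>\nu$ with endpoint $(n,d)$ that arise as some $\HN(X_y)$ is \emph{finite}. For each such $\nu'$, the locus $\{y:\HN(X_y[p])\ge\nu'\}$ is closed in $|\mathrm{Gr}_{n,n-d}|$ by the Hodge-Tate version of Proposition \ref{DSQDFKqsfuetD35xezmicontienutzskdfuz}. Setting
$$
U:=|\mathrm{Gr}_{n,n-d}|\,\setminus\,\bigcup_{\nu'>\nu}\{y:\HN(X_y[p])\ge\nu'\},
$$
one obtains an open, $\GL_n(\Zp)$-invariant, quasi-compact subset, being the complement of a finite closed union in the quasi-compact space $|\mathrm{Gr}_{n,n-d}|$. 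By construction $V\subseteq U$, hence $V\subseteq\overline U\cap\{\HN(X_y)\ge\nu\}$.

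The main obstacle is to prove the reverse inclusion $\overline U\cap\{\HN(X_y)\ge\nu\}\subseteq V$: at a limit point $x$ of $U$ with $\HN(X_x)\ge\nu$, one must show $\HN(X_x[p])=\nu$, and not merely that $\HN(X_x[p])\not\ge\nu'$ for each integer $\nu'>\nu$. Since a priori the upper semi-continuity of the $p$-torsion polygon allows excursions of $\HN(X_x[p])$ above $\nu$ that remain strictly below every integer $\nu'>\nu$, this step requires finer analysis. The key is to combine the closed-set avoidance coming from $x\in\overline U$ with the integer-breakpoint discreteness of $\HN(X_x)$: any non-trivial excursion of $\HN(X_x[p])$ above $\nu$ propagates, via the convolution inequality of Proposition \ref{DKGSISG92447EGsokotriplosmp} applied at each torsion level and the renormalization of Proposition \ref{FJQGpqt92571504F19751906}, to an excursion of the renormalized polygon $\HN(X_x)$ above $\nu$; by integrality this would force $\HN(X_x)\ge\nu'$ for some integer $\nu'>\nu$, contradicting the avoidance condition. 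Making this propagation argument precise --- and in particular checking that limit points in $\overline U$ cannot produce intermediate non-integer polygons --- is the heart of the lemma, after which the type-HN characterization of Proposition \ref{prop:sur essentiel} gives $\HN(X_x[p])=\HN(X_x)=\nu$, i.e.\ $x\in V$.
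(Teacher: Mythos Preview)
Your first step contains a genuine error. You claim that $\HN(X_y[p])=\nu$ alone forces $\HN(X_y)=\nu$, but the only general inequality available is $\HN(X_y)\le\HN(X_y[p])$ (the renormalized polygon is the infimum of the rescaled torsion polygons), which yields $\HN(X_y)\le\nu$, not equality. The type-HN characterization of Proposition~\ref{prop:sur essentiel} says $X_y$ is of type HN \emph{if and only if} $\HN(X_y[p])=\HN(X_y)$; it does not say that $\HN(X_y[p])=\nu$ for some fixed $\nu$ implies this equality. Concretely, take any $X$ that is not of type HN and set $\nu:=\HN(X[p])$: then $\HN(X[p])=\nu$ while $\HN(X)<\nu$ strictly. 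Your set $V$ is therefore strictly larger than the ss locus in general.

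The paper's approach supplies exactly the missing lower bound by intersecting with $\{\HN(X_x)\ge\nu\}$ --- which is already part of the target form $|\HT^{\HN\ge\nu}_{G,\mu}|$. The key equivalence is that $\HN(X_x)=\nu$ together with $\HN(X_x[p])=\nu$ is the same as $\HN(X_x)\ge\nu$ together with $\HN(X_x[p])=\nu$: from $\HN(X_x)\le\HN(X_x[p])=\nu$ and $\HN(X_x)\ge\nu$ one gets $\HN(X_x)=\nu$. Moreover, on $\{\HN(X_x)\ge\nu\}$ the condition $\HN(X_x[p])=\nu$ reduces to $\HN(X_x[p])\le\nu$ (since $\HN(X_x[p])\ge\HN(X_x)\ge\nu$ automatically), and this single non-strict inequality on the continuous function $y\mapsto\HN(X_y[p])$ directly cuts out a closed quasi-compact locus of the required shape, with no appeal to integrality of the $p$-torsion polygon.

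Your propagation argument for the reverse inclusion is also wrong in direction: since the renormalized polygon is the \emph{infimum} of the rescaled $\HN(X_x[p^n])$, excursions of $\HN(X_x[p])$ above $\nu$ are damped, not propagated, when passing to $\HN(X_x)$. Relatedly, your open $U$ is built by excluding only the loci $\{\HN(X_y[p])\ge\nu'\}$ for \emph{integer} $\nu'>\nu$, but $\HN(X_y[p])$ need not have integer breakpoints (only the renormalized $\HN(X_y)$ does, by Theorem~\ref{theo:poly renormalise est un poly}); hence $U$ contains points with non-integer $\HN(X_y[p])>\nu$ that your finite union never excludes. The paper's route sidesteps all of this by working with the single inequality $\HN(X_x[p])\le\nu$.
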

\begin{proof}
C'est une conséquence du fait que, $\HN(X_x\unp)=\nu$ et $\HN (X_x[p])=\nu$, est équivalent à ce que, $\HN(X_x\unp)\geq \nu$ et $\HN (X_x[p])=\nu$, cf. théo. \ref{SFKEGI39459sguzSF20refq}.
\end{proof}

D'après le théorème \ref{theo:essentiel}, on a un recouvrement 
\begin{equation}\label{eq:recouvrement}
|\HT_{G,\mu}^{\HN=\nu}| = \bigcup_{\underline{a}\in (\Z^n)^+/\Z} Hecke_{\underline{a}} \cdot |\HT_{G,\mu}^{\HN=\nu,ss}|.
\end{equation}

\subsection{Induction parabolique}

On dispose de l'énoncé suivant de \og mise en famille\fg{} des filtrations de Harder-Narasimhan des modules de Hodge-Tate.

\begin{prop}\label{prop:induction parabolique I}
Soit $\nu\in (\Q^n)^+$ associé à un polygone de pentes $\l_1> \cdots > \l_r$ sur  les intervalles $[0,h_1],\dots, [h_{r-1},h_r]$, $h_r=n$. Le champ $\HT_{G,\mu}^{\HN=\nu}$ s'identifie alors au champ qui à $S\in \mathrm{Perf}_{\Fp}$ associe le groupoïde des quadruplets $(S^\sharp,\F_\bullet, \E_\bullet,\a)$ où 
\begin{itemize}
\item $\F_\bullet$ est un $\Zp$-faisceau pro-étale filtré sur $S$ 
$$
0=\F_0\subset \F_1\subset \cdots\subset  \F_r 
$$
avec
\begin{itemize}
\item pour tout $i$, $\F_i$ localement constant libre de rang $h_i$ 
\item la filtration précédente est pro-étale localement scindée
\end{itemize}
\item $\E_\bullet$ est un fibré vectoriel filtré sur $S^\sharp$
$$
0=\E_0\subset \E_1\subset \cdots \subset \E_r
$$
avec
\begin{itemize}
\item pour tout $i$, $\E_i$ localement libre de rang $h_1(1-\l_1)+\cdots +h_i (1-\l_i)$
\item la filtration précédente est localement scindée sur $S^\sharp$
\end{itemize}
\item $\a: \F_\bullet \drt \E_\bullet$ est tel que pour tout $i$, 
$$(\a_{|\F_i}\otimes 1): \F_i\otimes_{\underline{\Zp}}\O_{S^\sharp} \ldrt \E_i
$$
est surjectif
\item pour tout $i\geq 1$, le gradué $\F_i/\F_{i-1}\drt \E_{i}/\E_{i-1}$ est semi-stable de pente $\l_i$, fibre à fibre sur $S$.
\end{itemize}
\end{prop}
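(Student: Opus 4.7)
The plan is to exhibit mutually inverse morphisms of $v$-stacks between $\HT_{G,\mu}^{\HN=\nu}$ and the stack $\mathcal{F}il_\nu$ parametrising the filtered data $(S^\sharp, \F_\bullet, \E_\bullet, \a)$ on the right-hand side. Once both morphisms are constructed, mutual inversion is automatic from uniqueness of the Harder--Narasimhan filtration in the rational category $\M_{\Qp}^{\HT}$.

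First I would define the forgetful morphism $\mathcal{F}il_\nu \to \HT_{G,\mu}^{\HN=\nu}$, sending $(S^\sharp, \F_\bullet, \E_\bullet, \a)$ to $(S^\sharp, \F_r, \E_r, \a)$. The surjectivity of $\a \otimes 1$ on the top step is inherited from its surjectivity on the graded pieces, which in turn is inherited from the top graded piece and the snake lemma. At every geometric point $x$, the fiberwise filtration is a filtration of $X_x\unp$ in $\M_{\Qp}^{\HT}$ with semistable graded pieces of strictly decreasing slopes $\lambda_1 > \cdots > \lambda_r$ and prescribed widths $h_i - h_{i-1}$, so by parts (3)--(4) of Proposition \ref{prop:sur essentiel} it is the HN filtration of $X_x\unp$; in particular $\HN(X_x\unp) = \nu$.

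Conversely, starting from $(S^\sharp, \F, \E, \a) \in \HT_{G,\mu}^{\HN=\nu}(S)$, at every geometric point $x$ the combination of Theorem \ref{theo:essentiel} and Proposition \ref{prop:sur essentiel} produces a canonical HN filtration of $X_x\unp$ with the prescribed semistable graded pieces. To globalise, I would pass to a pro-\'etale cover trivialising $\F \simeq \underline{\Zp}^n$; canonicity of the pointwise HN filtration in terms of the Hodge--Tate data $\a$ (which is defined on the cover) makes it compatible with the transition maps, and constancy of the ranks $h_i$ along $\nu$ then defines a filtration $\F_\bullet$ of $\F$ by sub-local-systems, pro-\'etale locally split by the splitting of the trivialisation. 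Next I would set $\E_i := \mathrm{Im}\bigl(\F_i \otimes_{\underline{\Zp}} \O_{S^\sharp} \to \E\bigr)$; as the fiberwise rank of this image is constant and determined by $\nu$, the sheaf $\E_i$ is a locally direct summand of $\E$ on $S^\sharp$. Semistability of each graded piece of the filtered quadruple is a pointwise condition which holds by construction.

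The main obstacle is the globalisation step: verifying that the pointwise HN filtrations truly assemble into a filtration by sub-local-systems on the $\F$-side and a filtration by locally direct subbundles on the $\E$-side. Both reduce to the constancy of $\HN = \nu$ across geometric points, which forces all the relevant fiberwise ranks $\htt(X_{i,x}) = h_i$ and $\dim \E_{i,x}$ to be constant. With these ranks constant, the image subsheaves $\E_i$ are automatically locally direct summands, and the sub-local-systems $\F_i$ descend from the pro-\'etale trivialisation by canonicity (hence Galois/descent equivariance) of the fiberwise HN filtration. Mutual inversion of the two morphisms then follows at once from uniqueness of the HN filtration inside $\M_{\Qp}^{\HT}$.
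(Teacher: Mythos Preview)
Your approach is sound and in spirit close to the paper's, but packaged differently, and there is one step where your justification is thinner than it needs to be.

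The paper does not construct mutually inverse morphisms of stacks. Instead it works entirely on the cover $\Gr_{n,n-d}$, fixes the standard flag $V_i=\Qp^{h_i}\oplus(0)$ with associated parabolic $P\subset\GL_n$, and defines the locally closed subscheme $Y\subset\Gr_{n,n-d}$ where the image of $k(x)^{h_i}\oplus(0)$ in the universal quotient has rank exactly $h_1(1-\l_1)+\cdots+h_i(1-\l_i)$. On $Y$ these image sheaves are automatically locally free (constant rank), so the filtered data exists tautologically over $Y$. One then has a locally closed immersion $\underline{\GL_n(\Zp)/P(\Zp)}\times Y^{ad,\diamond}\hookrightarrow \Gr_{n,n-d}^{ad,\diamond}$, and $\HT_{G,\mu}^{\HN=\nu}=[\underline{\GL_n(\Zp)}\backslash Z]$ for a closed subdiamond $Z$ of this product (the one cut out by the fibrewise semistability of the graded pieces). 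The identification with the filtered moduli problem is then immediate: the factor $\underline{\GL_n(\Zp)/P(\Zp)}$ records the $\Zp$-flag on $\F$, the factor $Y$ records $\E_\bullet$ and $\a$.

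Compared to this, your argument has to manufacture the filtration on $\F$ from the pointwise HN filtrations, and here your justification is incomplete. You write that ``constancy of the ranks $h_i$ along $\nu$ then defines a filtration $\F_\bullet$ of $\F$ by sub-local-systems''. But constancy of ranks is not enough: a family of $h_i$-dimensional $\Qp$-subspaces of $\Qp^n$ indexed by the points of the cover need not be locally constant. What you actually need is that the map $x\mapsto (V_i(x)\cap\Zp^n)$ from the cover to the profinite set $\GL_n(\Zp)/P(\Zp)$ is continuous. The paper's approach sidesteps this entirely: by working on $Y$ the flag is the standard one everywhere, so local constancy is built in, and the passage from $Y$ to the full stratum is just the $\GL_n(\Zp)$-orbit, which introduces the profinite factor $\underline{\GL_n(\Zp)/P(\Zp)}$ explicitly. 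Your canonicity argument correctly handles descent along the cover $\Gr_{n,n-d}\to\HT_{G,\mu}$, but it does not by itself supply the missing continuity on the cover; the paper's scheme-theoretic rank stratification is precisely what furnishes that.
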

\begin{proof}
Dans cette preuve, la Grassmanienne est considérée en tant que schéma.
Soit $V=\Qp^{n}$ muni de la filtration $(V_i)_{0\leq i\leq r}$ telle que $V_i=\Qp^{h_i}\oplus (0)$. Notons $P$ le sous-groupe parabolique de $\GL_n$ associé. Soit $Y\subset \mathrm{Gr}_{n,n-d}$, le sous-schéma réduit localement fermé formé des quotients localement libres de rang $n-d$ de $\O^n$ tels que pour tout point  $x$, l'image de $k(x)^{h_i}\oplus (0)$ soit de rang $h_1(1-\l_1)+\cdots +h_i (1-\l_i)$. Notons $\O^{n}\twoheadrightarrow \E$ le quotient universel sur $\mathrm{Gr}$. Alors,  pour tout $i$, l'image de $\O_Y^{h_i}\oplus (0)\drt \E_{|Y}$ est localement libre. Dès lors, 
$$
\underline{\GL_n (\Zp)/ P(\Zp)}\times Y^{ad,\diamond} \hookrightarrow \mathrm{Gr}_{n,n-d}^{ad,\diamond}
$$
et 
$$
\HT_{G,\mu}^{\HN=\nu} = \big [\, \underline{\GL_n (\Zp)}\, \bc\, Z\,\big ],
$$
avec $Z\subset \underline{\GL_n (\Zp)/ P(\Zp)}\times Y^{ad,\diamond}$ un sous-diamant fermé.
\end{proof}

Soit maintenant 
$$
M=\GL_{h_1}\times \GL_{h_2-h_1}\times \cdots\times \GL_{h_r-h_{r-1}}
$$
comme sous-groupe de Levi de $\GL_n$. On note $\mu$ le cocharactère de $M$ dont la composante sur le facteur $\GL_{h_i-h_{i-1}}$ est $\mathrm{diag} (\underbrace{z,\cdots,z}_{(h_i-h_{i-1})(1-\l_i)} ,1,\cdots, 1)$. De la proposition précédente on déduit le résultat suivant.

\begin{prop}\label{prop:calcul extensions}
Il y a un morphisme 
$$
\HT_{G,\mu}^{\HN=\nu}\ldrt \HN_{M,\mu}^{\HN=\nu_{ss}}
$$
qui est une extension successive de champs de Picard pro-étales de la forme 
$$
\big [ \, \F \drt \E \, \big ]
$$  
où $\F$ est un $\Zp$-faisceau pro-étale localement constant de rang fini, $\E$ un $\O^\sharp$-module localement libre de rang fini et $\F\drt \E$ est $\Zp$-linéaire.
\end{prop}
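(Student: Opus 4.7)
The plan is to analyze the morphism and its fibers directly using the description in Proposition \ref{prop:induction parabolique I}. That proposition identifies $\HT_{G,\mu}^{\HN=\nu}$ with the $v$-stack of filtered tuples $(\F_\bullet,\E_\bullet,\a)$ with semi-stable graded pieces, so I define the morphism $\HT_{G,\mu}^{\HN=\nu}\drt \HT_{M,\mu}^{\HN=\nu_{ss}}$ by passing to the associated graded $\bigoplus_{i=1}^r (\F_i/\F_{i-1},\E_i/\E_{i-1},\mathrm{gr}_i\a)$, which is visibly an $M$-structure with semi-stable components of slopes $\l_i$. By induction on $r$ it suffices to analyze the fiber of the morphism that forgets a single filtration step: fix a pair $(X',X_r)$ with $X'=(\F',\E',\a')$ filtered of length $r-1$ and $X_r=(\F_r,\E_r,\a_r)$ semi-stable of slope $\l_r$, and classify extensions of $X_r$ by $X'$ in the category underlying $\HT_{G,\mu}$.

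To describe this Picard stack of extensions, I argue by local triviality. Pro-\'etale-locally on $S$, the $\Zp$-linear extension of $\F_r$ by $\F'$ splits, and analytically-locally on $S^\sharp$, the extension of the locally free $\O^{\sharp}$-module $\E_r$ by $\E'$ splits. With both splittings chosen, $\a$ is given in matrix form by $\a',\a_r$ on the diagonal and an off-diagonal section $\b\inc \Homf_{\O^\sharp}(\F_r\otimes\O^\sharp,\E')$, and changes of splittings yield the standard coboundary relations. Gluing, the Picard stack of extensions is the one associated with the two-term complex (in degrees $[-1,0]$)
$$
\Homf_{\Zp}(\F_r,\F') \oplus \Homf_{\O^\sharp}(\E_r,\E') \xrig{\ (u,v)\,\mapsto\,\a'\circ u\,-\,v\circ \a_r\ } \Homf_{\O^\sharp}(\F_r\otimes \O^\sharp,\E').
$$

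To exhibit this Picard stack as a successive extension of Picard stacks of the form $[\F\drt\E]$, I use the short exact sequence of complexes with subcomplex $[\Homf_{\O^\sharp}(\E_r,\E')\drt \Homf_{\O^\sharp}(\F_r\otimes \O^\sharp,\E')]$ and quotient $[\Homf_{\Zp}(\F_r,\F')\drt 0]$. Since $\a_r$ is surjective with locally free kernel, the subcomplex is an injection of bundles with cokernel $\Homf_{\O^\sharp}(\ker \a_r,\E')$; it is therefore quasi-isomorphic to this bundle in degree $0$, which as a Picard stack is of the degenerate form $[0\drt\E]$. The quotient is of the degenerate form $[\F\drt 0]$. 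Thus the Picard stack of extensions sits in a short exact sequence whose sub and quotient are both of the required form $[\F\drt\E]$.

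The main obstacle is to make the local triviality argument rigorous in the $v$-sheaf/diamond setting of $\mathrm{Perf}_{\Fp}$: one must verify that compatible refinements can be chosen for the pro-\'etale data (extensions of pro-\'etale locally constant $\Zp$-local systems) and the analytic data (extensions of locally free $\O^\sharp$-modules) on the underlying perfectoid spaces, and that the resulting hypercohomology description genuinely computes the fiber as a $v$-stack on $\mathrm{Perf}_{\Fp}$. Once this is in place, the filtration of the two-term complex above immediately yields the announced successive extension.
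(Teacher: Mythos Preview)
Your approach is essentially the same as the paper's: define the morphism by passing to gradeds (Prop.~\ref{prop:induction parabolique I}), reduce by induction on $r$ to computing the Picard stack of extensions of one Hodge-Tate module by another, split locally, and identify this stack with the two-term complex
\[
\Homf_{\Zp}(\F_r,\F')\oplus \Homf_{\O^\sharp}(\E_r,\E')\ldrt \Homf_{\O^\sharp}(\F_r\otimes\O^\sharp,\E').
\]
The only difference is in the final step. You filter this complex to exhibit the Picard stack as an extension of $[\Homf_{\Zp}(\F_r,\F')\drt 0]$ by $[0\drt \Homf_{\O^\sharp}(\ker\a_r,\E')]$. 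The paper instead observes that, since $\a_r^*$ is injective, one may quotient by the acyclic subcomplex $[\Homf_{\O^\sharp}(\E_r,\E')\xrightarrow{\sim}\mathrm{im}\,\a_r^*]$ and obtain directly a \emph{single} Picard stack of the form
\[
\big[\,\Homf_{\Zp}(\F_r,\F')\ldrt \Homf_{\O^\sharp}(\ker(\a_r\otimes 1),\E')\,\big].
\]
This is slightly sharper and is exactly the form used in the subsequent dimension computation (Prop.~\ref{prop:calcul dimension strates HN}), but your filtration proves the stated proposition equally well.
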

\begin{proof}
Le morphisme de champs est donné par la proposition \ref{prop:induction parabolique I} en \og passant aux gradués\fg{}. 
Soit $S\in \mathrm{Perf}_{\Fp}$, $S^\sharp$ un débasculement de $S$, $(\F_1,\E_1,\a_1)$ et $(\F_2,\E_2,\a_2)$ deux $\Zp$-modules de Hodge-Tate entiers sur $S^\sharp$. On cherche à calculer le champ de Picard des extensions entre $(\F_2,\E_2,\a_2)$ et $(\F_1,\E_1,\a_1)$. 
Soit donc une extension
$$
\xymatrix@C=8mm@R=8mm{ 
0\ar[r] & \F_2\ar[d]_{\a_2}\ar[r] & \F\ar[r]\ar[d] & \F_1\ar[r]\ar[d]^{\a_1} & 0 \\
0\ar[r] & \E_2\ar[r] & \E\ar[r] & \E_1\ar[r] & 0.
}
$$
Pro-étale localement sur $S$, les suites exactes du haut et du bas sont scindées. Après scindage, le morphisme $\F\drt \E$ est donné par un morphisme $\F_1\drt \E_2$. De tels scindages forment un torseur pro-étale sous $\Hom_{\Zp} (\F_1,\F_2)\oplus \Hom_{\O_{S^\sharp}} ( \E_1,\E_2)$. On en déduit que notre champ de Picard est associé au complexe
\begin{equation*}
\Hom_{\Zp} (\F_1,\F_2)\oplus \Hom_{\O_{S^\sharp}} (\E_1,\E_2)\xrig{\ \a_{2*}\oplus \a_{1}^*} \Hom_{\Zp} (\F_1,\E_2). 
\end{equation*}
Puisque $\a_1\otimes 1$ est surjectif, $\a_1^*$ est injectif et le complexe précédent est quasi-isomorphe à
\begin{equation}\label{eq:eq10}
\Hom_{\Zp} (\F_1,\F_2) \ldrt \Hom_{\Zp} ( \F_1,\E_2) / \a_1^*\Hom_{\O_{S^\sharp}} (\E_1,\E_2).
\end{equation}
On conclut avec l'égalité 
\begin{equation*}
\Hom_{\Zp} ( \F_1,\E_2) / \a_1^*\Hom_{\O_{S^\sharp}} (\E_1,\E_2) = \Hom_{\O_{S^\sharp}} ( \ker (\a_1\otimes 1),\E_2). \qedhere
\end{equation*}
\end{proof}

On utilise les notions de dimension de la section 21 de \cite{ScholzeCohomologyDiamonds}.

\begin{prop}\label{prop:calcul dimension strates HN}
On a l'égalité 
$$
\dim |\HT_{G,\mu}^{\HN=\nu}| = <\mu-\nu,2\rho>
$$
qui coïncide avec $\mathrm{dim.\ trg} \ \HT_{G,\mu}^{\HN=\nu}$.
\end{prop}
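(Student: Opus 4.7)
The plan is to combine the parabolic induction of Proposition~\ref{prop:induction parabolique I} with the extension computation of Proposition~\ref{prop:calcul extensions} to break $\HT_{G,\mu}^{\HN=\nu}$ into tractable pieces whose dimensions add up to $\langle\mu-\nu,2\rho\rangle$. Write the slopes of $\nu$ as $\lambda_1>\cdots>\lambda_r$ with multiplicities $a_i=h_i-h_{i-1}$, and set $b_i=a_i(1-\lambda_i)$, so $\mathrm{rk}(\ker(\a_i\otimes1))=a_i\lambda_i$ on the $i$-th graded piece.

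First I would compute the dimension of the Levi semi-stable stratum. By Proposition~\ref{prop:induction parabolique I} (applied to the Levi $M$), $\HT_{M,\mu}^{\HN=\nu_{ss}}$ decomposes as the product $\prod_i \HT_{\GL_{a_i},\mu_i}^{\HN=\nu_{i,ss}}$ where $\nu_{i,ss}$ is the constant polygon of slope $\lambda_i$. Each factor is an open substack of $[\underline{\GL_{a_i}(\Z_p)}\bc\Gr_{a_i,a_i-b_i}^\diamond]$; since $\GL_{a_i}(\Z_p)$ is profinite, its orbits are $0$-dimensional and the factor has dimension $b_i(a_i-b_i)=a_i^2\lambda_i(1-\lambda_i)$.

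Next I would compute the dimensions of the fibers of the induction map
$$
\HT_{G,\mu}^{\HN=\nu}\ldrt \HT_{M,\mu}^{\HN=\nu_{ss}}.
$$
By Proposition~\ref{prop:calcul extensions}, the fibers are iterated extensions of Picard stacks of the form $[\F\to\E]$, where successively one extends a filtration of smaller-slope pieces by a new graded piece. The key point is that, by the quasi-isomorphism~(\ref{eq:eq10}), the stack of extensions of the $i$-th graded piece by the subobject built from the preceding ones is represented by the two-term complex
$$
\Hom_{\Z_p}(\F_{\mathrm{quot}},\F_{\mathrm{sub}})\ldrt \Hom_{\O^\sharp}(\ker(\a_{\mathrm{quot}}\otimes 1),\E_{\mathrm{sub}}).
$$
The source is a pro-\'etale sheaf of $\Z_p$-modules, hence has diamond dimension $0$, while the target is a vector bundle on $S^\sharp$ of rank $(\text{rk of ker})\cdot(\text{rk of }\E_{\mathrm{sub}})$. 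So the extension of the $i$-th graded piece by those of higher slope contributes dimension $\bigl(\sum_{k<i}a_k\lambda_k\bigr)\cdot a_i(1-\lambda_i)$, and telescoping gives a total fiber dimension of $\sum_{k<i} a_k\lambda_k\, a_i(1-\lambda_i)$.

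Adding both contributions yields
$$
\dim\,\HT_{G,\mu}^{\HN=\nu}=\sum_{i}a_i^2\lambda_i(1-\lambda_i)+\sum_{k<i}a_k\lambda_k\, a_i(1-\lambda_i)=\sum_i a_i(1-\lambda_i)\cdot\nu(h_i),
$$
where $\nu(h_i)=\sum_{k\leq i}a_k\lambda_k$ is the height of $\nu$ at $h_i$. The last step of the plan is to identify this combinatorial expression with $\langle\mu-\nu,2\rho\rangle$ by writing $2\rho=(n-1,n-3,\dots,-(n-1))$ in standard coordinates and using that $\langle\mu,2\rho\rangle=d(n-d)$; this is a direct algebraic rearrangement (Abel summation on the slopes). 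The coincidence with $\mathrm{dim.\ trg}$ follows since the Levi stratum is open in a smooth diamond and the extension fibers are representable by vector-bundle diamonds, both of which realize their cohomological dimension as a transcendence degree at the generic point.

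The main obstacle, and the place where care is most needed, is the rigorous justification that the successive-extension structure of Proposition~\ref{prop:calcul extensions} really adds dimensions: one must verify that at each step the Picard stack $[\F\to\E]$ is a diamond-fibration of the advertised dimension (using that $\F$ is pro-\'etale and hence diamond-dimension $0$, while $\E$ is a vector bundle with its standard diamond structure of rank equal to its $\O^\sharp$-rank), and that dimension is additive under these fibrations in the sense of Section~21 of~\cite{ScholzeCohomologyDiamonds}. The combinatorial identification with $\langle\mu-\nu,2\rho\rangle$ is then a routine calculation.
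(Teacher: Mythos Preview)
Your overall strategy is exactly the paper's: use the Levi stratum as base and add the dimensions of the successive Picard--stack fibres coming from Proposition~\ref{prop:calcul extensions}. But you have swapped the r\^oles of sub and quotient when you read off the rank of the target of the complex~(\ref{eq:eq10}). In the Harder--Narasimhan filtration the pieces of \emph{higher} slope sit \emph{below}: at step $i$ the subobject is the one built from graded pieces $1,\dots,i-1$ and the quotient is the $i$-th graded piece. With the conventions of the proof of Proposition~\ref{prop:calcul extensions} (so that $(\F_1,\E_1,\a_1)$ is the quotient and $(\F_2,\E_2,\a_2)$ the subobject), the target $\Hom_{\O_{S^\sharp}}(\ker(\a_{\mathrm{quot}}\otimes 1),\E_{\mathrm{sub}})$ has rank
$$
\big(\mathrm{rg}_{\Zp}\F_1-\mathrm{rg}_{\O_{S^\sharp}}\E_1\big)\cdot \mathrm{rg}_{\O_{S^\sharp}}\E_2\;=\;a_i\lambda_i\cdot\sum_{k<i}a_k(1-\lambda_k),
$$
which is the formula the paper displays, not $(\sum_{k<i}a_k\lambda_k)\cdot a_i(1-\lambda_i)$ as you wrote.

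This is not a harmless relabelling: your sum does not equal $\langle\mu-\nu,2\rho\rangle$. Take $n=2$, $d=1$, $\nu=(1,0)$; then $\langle\mu-\nu,2\rho\rangle=0$ (the stratum is the profinite set $\P^1(\Qp)$ inside $\P^1$), while your expression gives $1\cdot 1\cdot 1=1$. With the corrected fibre contribution the total becomes
$$
\sum_i a_i\lambda_i\sum_{k\le i}a_k(1-\lambda_k),
$$
and it is this quantity (not $\sum_i a_i(1-\lambda_i)\,\nu(h_i)$) that the Abel--summation step identifies with $d(n-d)-\langle\nu,2\rho\rangle=\langle\mu-\nu,2\rho\rangle$. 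Once this swap is fixed, the rest of your plan goes through as written.
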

\begin{proof}
On sait que $\HN_{M,\mu}^{\HN=\nu_{ss}}$ est un ouvert de $\HN_{M,\mu}$ dont on connait la dimension. Il suffit alors de 
reprendre le calcul d'extensions de la preuve de la proposition \ref{prop:calcul extensions}. 
La dimension du champ de Picard associé au complexe de l'équation (\ref{eq:eq10}) est 
$$
\big (\mathrm{rg}_{\Zp} (\F_1 ) - \mathrm{rg}_{\O_{S^\sharp}} (\E_1 ) \big ). \mathrm{rg}_{\O_{S^\sharp}} (\E_2).
$$
A partir de là le calcul se fait sans difficultés.
\end{proof}

\begin{exem}
Prenons $d=n-1$. Dès lors, les stratifications de Newton et de Harder-Narasimhan coïncident. La strate ouverte est $\Omega^{n-1}\subset \P^{n-1}$. La strate où le noyau de $\underline{\Qp^n}\drt \O(1)$ est de dimension $n-i$ est isomorphe à $\underline{\Gr_{n,i}(\Qp)}\times \Omega^{i,\diamond}$.
\end{exem}

\subsection{Stratification des variétés de Shimura}
\label{sec:stratifications varietes de Shimura}

Soit $\mathscr{S}$ le complété $p$-adique du modèle entier en niveau hyperspécial en $p$ d'une variété de Shimura de type PEL (\cite{Ko1}). On suppose le niveau hors $p$ fixé, suffisamment petit. On suppose, de plus, que le groupe en $p$ associé est isomorphe à $\GL_{n\Qp}\times \mathbb{G}_{m\Qp}$, le facteur en $p$ correspondant au facteur de similitude. Via cette identification, on fait l'hypothèse que le cocaractère de Hodge est donné par 
$
\mu (z)=\mathrm{diag} (\underbrace{z,\cdots,z}_{d\text{-fois}},  1,\cdots, 1)\times (z)$. 
 Notons $S$ la fibre générique de $\mathscr{S}$ comme $\Qp$-espace adique. Il y a un morphisme de périodes de Hodge-Tate (\cite{LivreIso}, \cite{ScholzeTorsion})
 $$
 \pi_{\HT}: S^\diamond\ldrt \HT_{G,\mu}.
 $$
Pour tout $\nu$, 
$$
\pi_{\HT}^{-1}\big (\HT_{G,\mu}^{\Newt^\diamond\leqslant  \nu} \big ) 
$$
est le diamant du  tube au dessus de l'union finie de strates de Newton de $\mathscr{S}_{\Fp}$ de polygone (concave) plus petit que $\nu$. Par tiré en arrière on obtient deux stratifications Hecke invariantes
$$
S^{\diamond, \Newt^\diamond=\nu} , \ S^{\diamond,\HN=\nu},
$$
indéxées par de tels $\nu$ avec des inclusions d'ouverts
$$
S^{\diamond,\Newt^\diamond \leqslant \nu} \subset S^{\diamond, \HN \leqslant \nu}.
$$
On note $S_\infty$ la variétés de Shimura en niveau infini en $p$ en tant que $\Qp$-espace perfectoïde. Soit $\nu\neq \nu_{ss}$, le polygone qui est une droite de pente $\frac{d}{n}$. On note $P_\nu$ le sous-groupe parabolique de $\GL_{n\Qp}$ associé à $\nu$.
 On a alors le résultat suivant qui découle de la proposition \ref{prop:induction parabolique I}.

\begin{prop}
Il existe un diamant localement spatial $T_\nu$ sur $\spa (\Qp)$, muni d'une action de $P_\nu (\Qp)$ et de correspondances de Hecke hors $p$, tel que 
$$
S_\infty^{\HN=\nu} = T_\nu \underset{\underline{P_\nu (\Qp)}}{\times} \underline{G(\Qp)}.
$$
\end{prop}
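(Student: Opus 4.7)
L'id�e est d'exploiter la description globale de la strate $\HT_{G,\mu}^{\HN=\nu}$ fournie par la proposition \ref{prop:induction parabolique I}, puis de tirer cette structure en arri�re par le morphisme des p�riodes de Hodge-Tate en niveau infini $\pi_{\HT,\infty}:S_\infty^\diamond \drt \mathrm{Gr}_{n,n-d}^{\mathrm{ad},\diamond}$, qui est $G(\Qp)$-�quivariant.

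La premi�re �tape consiste � remarquer que la sous-vari�t� $Y\subset \mathrm{Gr}_{n,n-d}$ introduite dans la preuve de la proposition \ref{prop:induction parabolique I} (le lieu o� la filtration de Hodge est en position fix�e par rapport au drapeau standard $V_\bullet =(\Qp^{h_i}\oplus (0))_i$) est pr�serv�e par l'action du sous-groupe parabolique $P_\nu (\Qp)\subset \GL_n (\Qp)$, puisque ce dernier stabilise le drapeau $V_\bullet$. Comme les $\GL_n (\Qp)$-translat�s de $Y$ recouvrent le lieu HN=$\nu$ de la grassmannienne rigide (tout drapeau de type $\nu$ dans $\Qp^n$ est $\GL_n(\Qp)$-conjugu� au drapeau standard), on obtient l'identification, au niveau des diamants,
$$
\mathrm{Gr}_{n,n-d}^{\HN=\nu,\mathrm{ad},\diamond}\;=\;\underline{G(\Qp)}\;\underset{\underline{P_\nu (\Qp)}}{\times}\;Y^{\mathrm{ad},\diamond}.
$$
L'action libre de $P_\nu(\Qp)$ sur $\underline{G(\Qp)}\times Y^{\mathrm{ad},\diamond}$ est l'action diagonale usuelle ($p\cdot (g,y)=(gp^{-1},py)$).

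La deuxi�me �tape consiste � poser
$$
T_\nu \;:=\; \pi_{\HT,\infty}^{-1}\big (Y^{\mathrm{ad},\diamond}\big )\;\subset\; S_\infty^\diamond.
$$
Par $G(\Qp)$-�quivariance du morphisme des p�riodes de Hodge-Tate en niveau infini, $T_\nu$ est stable sous l'action de $P_\nu (\Qp)$ et est muni de correspondances de Hecke hors $p$ par restriction de celles de $S_\infty$. L'isomorphisme voulu
$$
S_\infty^{\HN=\nu}\;=\;T_\nu\underset{\underline{P_\nu (\Qp)}}{\times}\underline{G(\Qp)}
$$
s'en d�duit formellement en tirant en arri�re la d�composition ci-dessus, puisque la formation des produits contract�s commute aux produits fibr�s et que l'action de $G(\Qp)$ sur $S_\infty$ rel�ve celle sur la grassmannienne.

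Le point technique � v�rifier, qui constitue la difficult� principale, est que $T_\nu$ est bien un diamant localement spatial. Cela r�sulte de ce que $Y$ est un sous-sch�ma localement ferm� dans la grassmannienne et de ce que $\pi_{\HT,\infty}$ est un morphisme qualitativement \og pro-�tale\fg{} sur le bon lieu au sens de \cite{ScholzeTorsion} (le niveau infini est perfecto�de au-dessus de la grassmannienne rigide). La structure localement spatiale de $T_\nu$ s'obtient ainsi � partir de celle du diamant associ� au sous-espace adique quasi-compact $Y^{\mathrm{ad}}$ et de la nature perfecto�de de $S_\infty$. La Hecke-�quivariance hors $p$ et la $P_\nu(\Qp)$-�quivariance �tant �videntes par construction, ceci ach�ve la preuve.
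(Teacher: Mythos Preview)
Your approach is essentially the one the paper has in mind: the paper gives no detailed proof and simply asserts that the result \og d\'ecoule de la proposition \ref{prop:induction parabolique I}\fg{}, and you spell this out correctly by pulling back the parabolic-induction structure of $\mathrm{Gr}_{n,n-d}^{\HN=\nu}$ along the $G(\Qp)$-equivariant period map $\pi_{\HT,\infty}$.

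One small imprecision worth fixing: the sub-diamond $Y^{\mathrm{ad},\diamond}$ from the proof of proposition \ref{prop:induction parabolique I} only records that the standard flag $V_\bullet$ is in the correct relative position with respect to the Hodge filtration; it does not yet impose that the graded pieces are semi-stable. In the paper's notation this extra condition cuts out the closed sub-diamond $Z$, and it is $Z$ (or equivalently the intersection of $Y^{\mathrm{ad},\diamond}$ with the $\HN=\nu$ locus) that satisfies $\mathrm{Gr}_{n,n-d}^{\HN=\nu}\simeq \underline{G(\Qp)}\times_{\underline{P_\nu(\Qp)}} Z'$ for the appropriate $P_\nu(\Qp)$-stable piece $Z'$. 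Concretely, you should set $T_\nu=\pi_{\HT,\infty}^{-1}(Y^{\mathrm{ad},\diamond})\cap S_\infty^{\HN=\nu}$ rather than the full preimage of $Y^{\mathrm{ad},\diamond}$. With this correction your argument goes through, and the Iwasawa decomposition $\GL_n(\Qp)/P_\nu(\Qp)=\GL_n(\Zp)/P_\nu(\Zp)$ reconciles your $G(\Qp)$-formulation with the $\GL_n(\Zp)$-formulation used in the paper's proof of proposition \ref{prop:induction parabolique I}.
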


En d'autres termes, les  strates non semi-stables sont paraboliquement induites.

\subsection{Perspectives}\label{sec:perspectives}
\subsubsection{Extension à d'autres groupes et au cas non minuscule}

Soit $G$ un groupe réductif sur $\Qp$ et $\mu:\mathbb{G}_{m\Qpb}\drt G_{\Qpb}$ non forcément minuscule. 
Les résultats de \cite{CornutMacarena} devraient permettre la construction d'une stratification de Harder-Narasimhan $G(\Qp)$-invariante de 
$$
\mathrm{Gr}_{\mathrm{B}_{dR}}^{\leqslant \mu}
$$
la cellule de Schubert fermée associée à $\mu$ sur $\spa (E)^\diamond$. D'après \cite{Gtorseurs} et \cite{CaraianiScholze} on dispose d'une stratification de Newton $G(\Qp)$-invariante de cette cellule indexée par l'ensemble de Kottwitz $B(G,\mu^{-1})$. 
On peut alors espérer que la stratification de Harder-Narasimhan soit indexée par le même type d'ensemble avec des inclusions
$$
(\mathrm{Gr}_{\mathrm{B}_{dR}}^{\leqslant \mu})^{\Newt^\diamond \leq \nu}\subset (\mathrm{Gr}_{\mathrm{B}_{dR}}^{\leqslant \mu})^{\HN\leq \nu}
$$
pour $\nu$ dans une chambre de Weyl positive. La conjecture suivante devrait alors 
résulter de techniques identiques à celles utilisées dans \cite{ChenFarguesXu}.

\begin{conj}
\

\begin{enumerate}
\item 
Sont équivalents:
\begin{enumerate}
\item Les stratifications de Harder-Narasimhan et de Newton de $\mathrm{Gr}_{\mathrm{B}_{dR}}^{\leqslant \mu}$ coïncident.
\item L'inclusion $(\mathrm{Gr}_{\mathrm{B}_{dR}}^{\leqslant \mu})^{\Newt^\diamond=\nu_{ss}} \subset (\mathrm{Gr}_{\mathrm{B}_{dR}}^{\leqslant \mu})^{\HN=\nu_{ss}}$ de l'ouvert basique dans l'ouvert semi-stable est une égalité.
\item L'ensemble de Kottwitz $B(G,\mu^{-1})$ est pleinement Hodge-Newton décomposable (\cite{GoertzHeNie}, \cite{ChenFarguesXu}). 
\end{enumerate}
\item 
Les strates fermées \og $\mu$-ordinaire\fg{} de Newton et de Harder-Narasimhan coïncident toujours. 
\end{enumerate}
\end{conj}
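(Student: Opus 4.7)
The strategy rests on combining three ingredients already developed or sketched in the paper: the inequality $\HN \leq \Newt^\diamond$ of Theorem~\ref{theo:inegalite Newton et HN} (expected to generalize to the reductive setting), the parabolic-induction description of the HN stratification (generalizing Proposition~\ref{prop:induction parabolique I}), and the Hodge--Newton reduction techniques of \cite{ChenFarguesXu} and \cite{GoertzHeNie}. The implications (a)$\Rightarrow$(b) in part~(1) is tautological, and the hard content lies in (b)$\Rightarrow$(c) and (c)$\Rightarrow$(a).

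For (c)$\Rightarrow$(a), I would proceed by induction on the semisimple rank of $G$. Full Hodge--Newton decomposability means every non-basic $[b] \in B(G,\mu^{-1})$ comes, via a proper parabolic $P = MN$ with dominant cocharacter $\mu_M$, from the basic element of $B(M,\mu_M^{-1})$. On the geometric side, the expected generalization of Proposition~\ref{prop:induction parabolique I} should realize $(\mathrm{Gr}^{\leq \mu}_{B_{dR}})^{\HN = \nu}$ as an $M$-induced stratum, parametrized by filtered data $(S^\sharp, \F_\bullet, \E_\bullet, \a)$ whose graded pieces are semistable Hodge--Tate style modules of slope $\nu_i$. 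The inductive hypothesis applied to $M$ identifies the HN-semistable stratum of $M$ with its basic Newton stratum, and the compatibility of modifications on the curve with passage to the associated graded transports this identification back to $G$.

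For (b)$\Rightarrow$(c), the argument is by contraposition: assuming some non-basic $[b_0] \in B(G,\mu^{-1})$ which is not Hodge--Newton decomposable, I would construct a geometric point $x$ of the $[b_0]$-Newton stratum whose modification $\G_x$ is semistable of slope $\langle \mu \rangle$, contradicting~(b). The construction should mimic the degeneration arguments of \cite{ChenFarguesXu}: start from a basic point and deform its associated modification within the Schubert cell by moving the modification data, using that the failure of Hodge--Newton decomposability prevents the Newton polygon from strictly increasing along any such deformation. This is the step where condition~(c) becomes both necessary and sufficient for the rigidity of the semistable locus.

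Part~(2) for the $\mu$-ordinary stratum should follow by a direct verification: the $\mu$-ordinary Newton point $\nu_{\mu\text{-ord}}$ is always Hodge--Newton decomposable relative to the centralizer of its Newton cocharacter, even when other non-basic points are not. One then applies the $(c)\Rightarrow(a)$ machinery only to the Levi attached to $\nu_{\mu\text{-ord}}$ to conclude. The main obstacle I foresee is carrying out the descent algorithm of Theorem~\ref{theo:essentiel} in the general reductive setting, i.e.\ replacing modules of Hodge--Tate by appropriate $G$-equivariant admissible modifications of bundles on the curve; this is required both to formulate the parabolic induction rigorously and to secure a ``fundamental domain'' decomposition of the HN strata under Hecke. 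Once that technical package is in place, the Hodge--Newton reduction of \cite{ChenFarguesXu} should make the combinatorial equivalence (b)$\Leftrightarrow$(c) essentially automatic.
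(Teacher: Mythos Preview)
The statement you are attempting to prove is explicitly a \emph{conjecture} in the paper (it is typeset in a \texttt{conj} environment in the ``Perspectives'' section), and the paper provides no proof. The only indication given is the sentence immediately preceding it: ``La conjecture suivante devrait alors r\'esulter de techniques identiques \`a celles utilis\'ees dans \cite{ChenFarguesXu}.'' There is therefore nothing in the paper to compare your argument against beyond this one-line heuristic.

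Your plan is broadly consistent with that heuristic: you invoke the Hodge--Newton reduction machinery of \cite{ChenFarguesXu} and \cite{GoertzHeNie}, the parabolic-induction description of HN strata, and an induction on Levi subgroups. But you should be aware that what you have written is a proof \emph{strategy}, not a proof, and several of the ingredients you rely on are themselves conjectural in the paper. In particular: the extension of the HN stratification to general reductive $G$ and non-minuscule $\mu$ is only \emph{expected} (the paper says ``Les r\'esultats de \cite{CornutMacarena} devraient permettre la construction\ldots''); the parabolic-induction statement for general $G$ is Conjecture~2 of the same section, not a theorem; and the inequality $\HN \leq \Newt^\diamond$ for general $G$ is not established in the paper. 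Your step (b)$\Rightarrow$(c) via contraposition is the least developed: ``construct a geometric point\ldots by deforming the modification data'' is precisely the hard content, and you have not indicated what replaces the explicit lattice arguments of the $\GL_n$ case.

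In short, your outline is reasonable as a research program and matches the author's own expectation, but it does not constitute a proof, nor does the paper claim one exists.
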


Citons également la conjecture suivante extension des propositions \ref{prop:induction parabolique I} et \ref{prop:calcul dimension strates HN}.

\begin{conj}
\

\begin{enumerate}
\item 
Les strates de Harder-Narasimhan non semi-stables sont paraboliquement induites. 
\item La dimension de la strate associé au vecteur de Harder-Narasimhan $\nu$ est donnée par 
le produit scalaire $<\mu-\nu,2\rho>$.
\end{enumerate}
\end{conj}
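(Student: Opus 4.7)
Le plan est de généraliser les propositions \ref{prop:induction parabolique I} et \ref{prop:calcul dimension strates HN} de $G=\GL_n$ et $\mu$ minuscule au cas général. L'ingrédient manquant est une théorie des filtrations de Harder-Narasimhan pour les \og $G$-modules de Hodge-Tate\fg{}, qu'on compte obtenir en tannakisant la théorie de la section \ref{sec:Filtration des modules de HT} à l'aide du formalisme de Cornut--Peche Irissarry (\cite{CornutMacarena}). À chaque $\nu \leq \mu$ dans la chambre de Weyl positive on doit alors associer un parabolique $P_\nu\subset G$ (le stabilisateur de la filtration de pentes $\nu$) et, pour un point $x\in \mathrm{Gr}^{\leqslant \mu}_{B_{dR}}$ appartenant à la strate $\HN=\nu$, une réduction canonique à $P_\nu$ du $G$-torseur sous-jacent.

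Pour le point (1), on procèderait comme dans la preuve de la proposition \ref{prop:induction parabolique I}: la filtration de Harder-Narasimhan de l'objet de Hodge-Tate associé fournit, par tannakisation, un $P_\nu$-torseur pro-étale sur $S$ muni d'une structure supplémentaire (une filtration compatible du $\B_{dR}^+$-réseau associé). Les gradués sont tous semi-stables de pentes prescrites par $\nu$, donc fournissent un point du champ $\HT_{M_\nu,\mu_{M_\nu}}^{\HN=\nu_{ss}}$ où $M_\nu$ est le Levi de $P_\nu$. On en déduit, comme dans la section \ref{sec:stratifications varietes de Shimura}, que la strate est de la forme $T_\nu \times^{\underline{P_\nu(\Qp)}} \underline{G(\Qp)}$, pour un diamant localement spatial $T_\nu$ muni d'une action de $P_\nu(\Qp)$, ce qui est précisément l'énoncé d'induction parabolique.

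Pour le point (2), on combine le calcul dans le Levi avec celui des fibres de $T_\nu \drt \HT_{M_\nu,\mu_{M_\nu}}^{\HN=\nu_{ss}}$. La strate semi-stable de $M_\nu$ est un ouvert dense du champ $\HT_{M_\nu,\mu_{M_\nu}}$ dont la dimension vaut $\langle \mu_{M_\nu}, 2\rho_{M_\nu}\rangle$. Les fibres sont des itérations de champs de Picard pro-étales dont le calcul, par une généralisation directe du complexe $(\ref{eq:eq10})$ apparaissant dans la preuve de la proposition \ref{prop:calcul extensions}, donne des contributions $\dim_{\O_{S^\sharp}} \Hom(\ker(\a_i\otimes 1),\E_j)$ pour $i<j$. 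Celles-ci se somment en la dimension du radical nilpotent $\Lie U_\nu$ tordue convenablement par $\mu$ et $\nu$. Via la formule de Kostant $\dim \Lie U_\nu = \langle 2\rho - 2\rho_{M_\nu}, \cdot\rangle$ et une comptabilité précise entre pentes et poids de $\mu$, on obtient la formule souhaitée
\[
\dim |\HT_{G,\mu}^{\HN=\nu}| = \langle \mu_{M_\nu}, 2\rho_{M_\nu}\rangle + \langle \mu-\nu, 2\rho-2\rho_{M_\nu}\rangle = \langle\mu-\nu,2\rho\rangle,
\]
la dernière égalité utilisant que $\mu_{M_\nu}$ et $\nu$ ont même projection centrale sur $M_\nu$.

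L'obstacle principal est la construction effective et canonique de la filtration de Harder-Narasimhan dans le cadre $G$-équivariant, en particulier la vérification que la filtration varie bien en familles au sens des diamants et que les strates $\HT_{G,\mu}^{\HN=\nu}$ sont effectivement localement fermées (avec des fermetures $\HN\leq \nu$ partiellement propres, comme dans le cas $\GL_n$). La difficulté tient au fait que, en dehors du cas minuscule, il n'y a pas de description aussi directe par des grassmanniennes classiques, et il faut travailler avec la grassmannienne affine de Scholze. Une fois ce cadre établi, les résultats de \cite{ChenFarguesXu} et \cite{Gtorseurs} devraient permettre de comparer strates de Newton et de Harder-Narasimhan et ainsi d'aborder également la conjecture précédente sur les cas Hodge-Newton décomposables.
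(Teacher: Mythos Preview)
The statement you are attempting to prove is stated in the paper as a \emph{conjecture}, not a theorem. It appears in the section \og Perspectives\fg{} (section~\ref{sec:perspectives}), which explicitly discusses extensions of the paper's results to general reductive groups $G$ and non-minuscule $\mu$ as open problems. The paper provides no proof of this statement; indeed, the author writes just before the conjecture that \og Les r\'esultats de \cite{CornutMacarena} devraient permettre la construction d'une stratification de Harder-Narasimhan\fg{} --- the conditional \og devraient\fg{} signalling that even the definition of the stratification for general $G$ is not carried out in the paper.

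Your proposed strategy is a reasonable extrapolation of the paper's methods for $\GL_n$ (propositions~\ref{prop:induction parabolique I}, \ref{prop:calcul extensions}, \ref{prop:calcul dimension strates HN}) and is in fact closely aligned with what the author himself suggests: tannakize via Cornut--Peche Irissarry, obtain a canonical $P_\nu$-reduction, pass to the Levi, and compute extension classes. But this remains a programme, not a proof. The obstacles you identify at the end --- the construction of the HN filtration in the $G$-equivariant setting, its good behaviour in families of diamonds, and the local closedness of the strata in $\mathrm{Gr}_{B_{dR}}^{\leqslant \mu}$ outside the minuscule case --- are precisely the reasons the author left this as a conjecture. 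In particular, your dimension computation in point~(2) presupposes that the fibres over the Levi stratum are iterated Picard stacks of the same shape as in the $\GL_n$ case, but for non-minuscule $\mu$ the modification data live in $B_{dR}^+$-lattices rather than in vector bundles on $S^\sharp$, and the analogue of the complex~(\ref{eq:eq10}) is not established.

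In short: there is no proof in the paper to compare against, and your proposal is a plausible sketch of a research programme rather than a proof.
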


\subsubsection{Applications arithmétiques}

Il s'agirait ici d'utiliser les techniques de \cite{CaraianiScholze} couplées à la filtration de Harder-Narasimhan précédente. Plus précisément, on peut considérer le complexe $G(\Qp)\times (\text{Hecke hors }p)$-équivariant 
$$
R\pi_{\HT*} \overline{\mathbb{F}}_\ell.
$$
Cariani et Scholze montrent qu'il satisfait une certaine condition de perversité relativement à la stratification de Newton de la variété de drapeaux (\cite{CaraianiScholze} prop. 6.1.3). Ils calculent de plus les fibres géométriques de ce complexe en termes de cohomologie de variétés d'Igusa (\cite{CaraianiScholze} théo. 4.4.4). Il serait intéressant de regarder la restriction de ce complexe aux strates de Harder-Narasimhan. Par exemple, la partie supercuspidale en $p$ de ce \og complexe pervers\fg{},
$$
(R\pi_{\HT*} \overline{\mathbb{F}}_\ell)_{cusp},
$$
qui devrait encore être pervers, 
devrait être concentrée sur la strate de Harder-Narasimhan semi-stable (propriété d'induction parabolique des strates non semi-stables). On peut alors se poser la question de savoir quelles contraintes cela impose lorsque l'on couple cette propriété de localisation avec la propriété de \og perversité\fg{} et le calcul de la cohomologie des fibres géométriques, qui sont constantes le long de chaque strate de Newton.

Supposons par exemple que l'on sache montrer que:
\begin{enumerate}
\item Pour tout $i$ et $\nu\neq \nu_{ss}$, $(R^i\pi_{\HT *} \overline{\mathbb{F}}_\ell)_{\mathcal{F}l^{\Newt^\diamond =\nu}}$ est un système local étale (Caraiani et Scholze montrent seulement que les fibres géométriques sont constantes).
\item Pour tout $\nu \neq \nu_{ss}$ et toute composante connexe $\mathcal{C}$ de $\mathcal{F}l^{\Newt^\diamond=\nu}$, $\mathcal{C}\not \subset \mathcal{F}l^{\HN=\nu_{ss}}$. 
\end{enumerate}

On devrait alors pouvoir montrer que $(R\pi_{\HT*} \overline{\mathbb{F}}_\ell)_{cusp}$ est concentré sur le lieu basique et donc la partie supercuspidale en $p$ de la cohomologie de la variété de Shimura, coïncide avec la partie supercuspidale de la cohomologie du lieu basique (\cite{Laurent1} et \cite{ShinRZ} pour des cas particuliers au niveau des sommes alternées dans des groupes de Grothendieck).

\bibliographystyle{plain}
\bibliography{../../../biblio.bib}
\end{document}